\numberwithin{equation}{section}
\title[The rel leaf of the Arnoux-Yoccoz surface]{The rel leaf and real-rel ray of the Arnoux-Yoccoz surface in genus 3}  
\author{W. Patrick Hooper}
\address{City College of New York, 160 Convent Ave, New York, NY, USA 10463 {\tt whooper@ccny.cuny.edu}}
\author{Barak Weiss}
\address{Tel Aviv University 
{\tt barakw@post.tau.ac.il}}
\font\sn = cmssi8 scaled \magstep0
\font\si = cmti8 scaled \magstep0
\long\def\compat#1{\ifdraft{{\color{blue}\si Pat says ``#1'' }}\else\ignorespaces\fi}
\long\def\combarak#1{\ifdraft{\color{red}\sn Barak says ``#1'' }\else\ignorespaces\fi}
\newcommand\hol{\mathrm{hol}}
\newif\ifdraft\drafttrue
\newcommand\name[1]{\label{#1}{\ifdraft{\sn [#1]}\else\ignorespaces\fi}}
\newcommand\eq[2]{{\ifdraft{\ \tt [#1]}\else\ignorespaces\fi}\begin{equation}\label{#1}{#2}\end{equation}}
\newcommand {\equ}[1]{\eqref{#1}}
\newcommand{\MM}{{\mathcal{M}}}
\newcommand{\HH}{{\mathcal{H}}}
\newcommand{\Q}{{\mathbb {Q}}}
\newcommand{\R}{{\mathbb{R}}}
\newcommand{\Res}{{\mathrm{Res}}}
\newcommand{\Z}{{\mathbb{Z}}}
\newcommand{\C}{{\mathbb{C}}}
\newcommand{\IE}{{\mathcal{IE}}}
\newcommand{\E}{{\mathbf{e}}}
\newcommand{\A}{{\mathbf{a}}}
\newcommand{\B}{{\mathbf{b}}}
\newcommand{\GL}{\operatorname{GL}}
\newcommand{\Mod}{\operatorname{Mod}}
\newcommand{\SL}{\operatorname{SL}}
\newcommand{\diag}{{\rm diag}}
\newcommand {\ignore}[1]  {}
\newcommand{\spa}{{\rm span}}
\newcommand{\bq}{{\mathbf{q}}}
\newcommand{\LL}{{\mathcal L}}
\newcommand{\FF}{{\mathcal{F}}}
\newcommand{\x}{{\bf x}}
\newcommand{\til}{\widetilde}
\newcommand{\supp}{{\rm supp}}
\newcommand{\rel}{{\mathrm{Rel}}}
\newcommand{\sm}{\smallsetminus}
\newcommand{\vre}{\varepsilon}
\newtheorem{thm}{Theorem}[section]
\newtheorem{lem}[thm]{Lemma}
\newtheorem{prop}[thm]{Proposition}
\newtheorem{cor}[thm]{Corollary}
\newtheorem{remark}[thm]{Remark}
\begin{document}
\date{\today}
\begin{abstract}
We analyze the rel leaf of the Arnoux-Yoccoz translation surface in
genus 3. We show that the leaf is dense in the stratum
$\HH(2,2)^{\mathrm{odd}}$ but that the real-rel trajectory of the
surface is divergent. On this real-rel trajectory, the vertical
foliation of one surface is
invariant under a pseudo-Anosov map (and in particular is uniquely
ergodic), but the vertical foliations on all other surfaces are
completely periodic.
\end{abstract}
\maketitle

\section{Introduction}
A translation surface is a compact oriented surface equipped with a
geometric structure which is Euclidean everywhere except at finitely
many singularities. A stratum is a moduli space of translation
surfaces of the same genus whose singularities share the same combinatorial characteristics. In recent years, intensive study has been
devoted to the study of dynamics of group actions and foliations on
strata of translation surfaces. See \S \ref{sec: basics} for precise
definitions, and see \cite{MT, zorich survey} for surveys.

Let $x$ be a translation surface with $k>1$ singularities. There
is a local deformation of $x$ obtained by moving its singularities
with respect to each other while keeping the holonomies of closed
curves on $x$ fixed. This local deformation gives rise to a foliation
of the stratum $\HH$ containing $x$, with leaves of real dimension $2(k-1)$. In the literature this foliation
has appeared under various names (see \cite[\S 9.6]{zorich
  survey}, \cite{schmoll}, \cite{McMullen-twists} and references therein), and we refer to it as the {\em rel
  foliation}, since nearby surfaces in the same leaf differ only in their relative
periods. A sub-foliation of this foliation, which we will refer to as the
{\em real rel foliation}, is obtained by only
varying the horizontal holonomies of vectors, keeping all vertical
holonomies fixed. Although neither the  rel or the real-rel leaves are given by a group
action, the obstructions to flowing along the real rel foliation are
completely understood (see \cite{MW}) and in the special case $k=2$
it makes sense to discuss the real-rel trajectories $\left\{\rel_r^{(h)}x:
r \in \R \right\}$ of surfaces $x$ without horizontal saddle connections
joining distinct singularities. 

In genus 2, results of McMullen \cite{McMullen-isoperiodic,
  McMullen-cascades} give a detailed understanding of
the closure of rel leaves in the eigenform locus. These results should
be viewed as a
companion to McMullen's classification of closed
sets invariant under the action of $G=\SL_2(\R)$ in genus 2 \cite{McMullen-Annals}. 
A recent breakthrough result of Eskin, Mirzakhani and Mohammadi
\cite{EMM2} has shed light on the $G$-invariant closed
subsets for arbitrary strata, and the purpose of this paper is to
contribute to the 
study of  the topology of closures of rel leaves. Here we focus on the
rel leaf of the Arnoux-Yoccoz surface $x_0$ (see Figure \ref{fig:AY}), which lies in 
the connected component $\HH(2,2)^{\mathrm{odd}}$
of the genus $3$ stratum $\HH(2,2)$. This  
surface, introduced by
Arnoux and Yoccoz in \cite{AY}, is a source of interesting
examples for the theory of translation surfaces. See in particular
\cite{HL}, and the work of Hubert, Lanneau and M\"oller \cite{HLM} in
which the orbit-closure $\overline{Gx_0}$ was determined.  We denote by $\HH$ the sub-locus
of surfaces in $\HH(2,2)^{\mathrm{odd}}$ whose area is the same as
that of $x_0$. 

\begin{figure}
  \hfill
  \begin{minipage}[c]{0.65\textwidth}
   \vspace{0pt}\raggedright
   \includegraphics[scale=0.60]{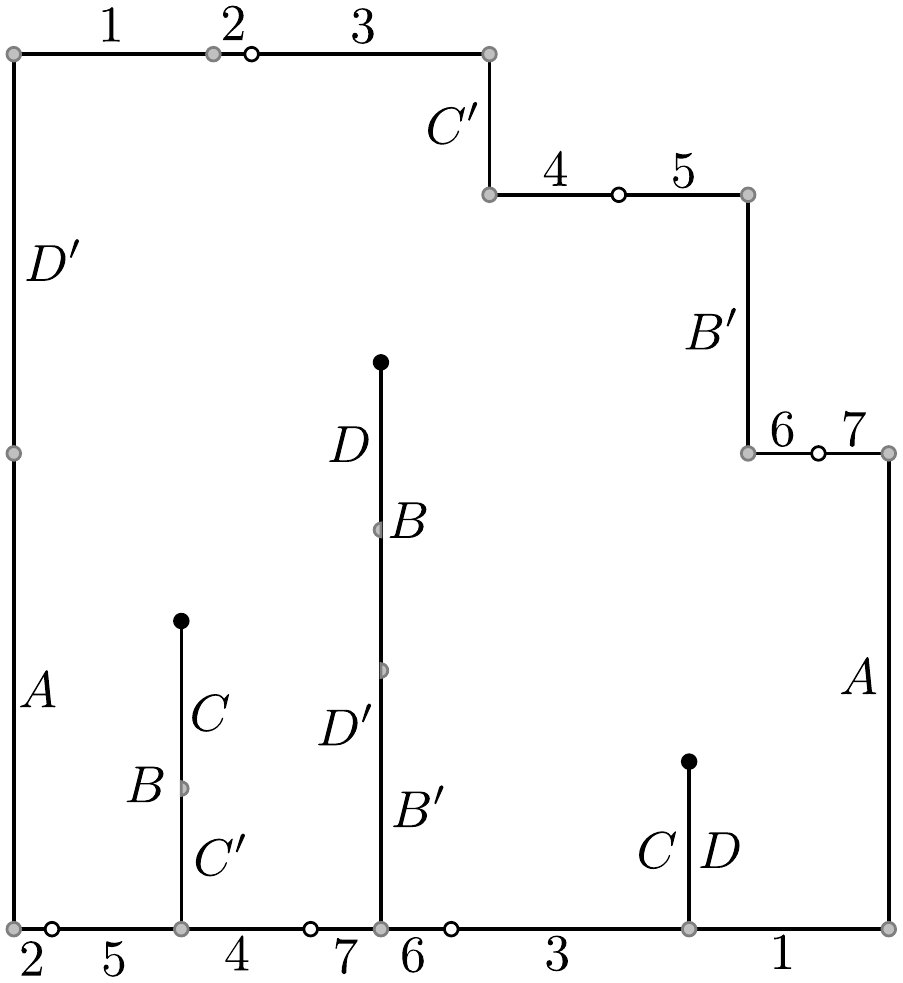}
  \end{minipage}
   \hfill
  \begin{minipage}[c]{0.3\textwidth}
\vspace{0pt}\raggedright
\begin{tabular}{ll}
Label & Edge length \\
\hline
$1$ & $1-\alpha$ \\
$2$ & $2\alpha-1$ \\
$3$ & $\alpha$ \\
$4$, $5$ & $\alpha^2$ \\
$6$, $7$ & $\alpha^3$ \\
$A$ & $2 \alpha$ \\
$B$ &  $\alpha+\alpha^3$ \\
$B'$ & $2 \alpha^2$ \\
$C$, $D$ & $\alpha^2+\alpha^4$ \\
$C'$ & $2 \alpha^3$ \\
$D'$ & $2 \alpha^2+2 \alpha^3$
\end{tabular}  
  \end{minipage}
  \caption{The Arnoux-Yoccoz surface $x_0$  with distinguished
    singularities $x_0$. Edges with the same label are identified 
  by translation, and their lengths are provided by the chart. Black and white points
  denote the two singularities of the surface, and grey dots denote regular points.}
  \label{fig:AY}
  \end{figure}

\medskip 

We now state our results, referring to \S \ref{sec: basics} for
detailed definitions. 

\begin{thm}\name{thm: real rel}
Suppose $x_0$ is as in Figure \ref{fig:AY}, so that the
horizontal and vertical directions are fixed by the pseudo-Anosov map
of \cite{AY}. Then the real rel trajectory of $x_0$ is divergent in $\HH$. Moreover, for any
$r \neq 0$, there is a vertical cylinder decomposition of
$\rel_r^{(h)} x_0$, and the circumferences of the vertical cylinders
tend 
uniformly to zero as $r \to \pm \infty$. 
\end{thm}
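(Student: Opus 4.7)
The plan is to exploit the self-similarity of $x_0$ coming from the Arnoux--Yoccoz pseudo-Anosov $\psi$ in order to reduce the theorem to an analysis of the real-rel deformation at a single scale of~$r$.

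The first step is to record the resulting scaling property. Because the horizontal and vertical directions on $x_0$ are fixed by $\psi$, and $\psi$ has dilatation~$1/\alpha$, the class of $x_0$ in $\HH$ is a fixed point of the Teichm\"uller geodesic flow element $g_{t_0}=\diag(1/\alpha,\alpha)$ with $e^{t_0}=1/\alpha$. The standard equivariance $g_t\circ\rel^{(h)}_r=\rel^{(h)}_{e^t r}\circ g_t$ then yields the self-similarity
\[
\rel^{(h)}_{r/\alpha}x_0 \;=\; g_{t_0}\,\rel^{(h)}_r x_0 \qquad\text{for all }r\in\R.
\]
Since $g_{t_0}$ contracts vertical lengths by $\alpha$, a vertical cylinder of circumference $c$ on $\rel^{(h)}_r x_0$ corresponds to one of circumference $\alpha c$ on $\rel^{(h)}_{r/\alpha}x_0$. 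It therefore suffices to produce a vertical cylinder decomposition of $\rel^{(h)}_r x_0$, with circumferences confined to a compact subinterval $[c_{\min},c_{\max}]\subset(0,\infty)$, for $r$ ranging over one fundamental interval such as $[\alpha,1]$ (and analogously $[-1,-\alpha]$); every other $r\neq 0$ is then reached by iterating $g_{t_0}^{\pm 1}$.

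The second step, which is the technical heart of the argument, is to construct this cylinder decomposition explicitly. I would work directly with the polygonal model of Figure~\ref{fig:AY} and its edge lengths $1-\alpha$, $2\alpha-1$, $\alpha$, $\alpha^2$, $\alpha^3$, subject to $\alpha+\alpha^2+\alpha^3=1$. The real-rel deformation by $r$ can be realised concretely by displacing one singularity horizontally by $r$ relative to the other; for each such $r$ in the fundamental interval, I would locate an explicit finite family of vertical saddle connections joining singularities of the deformed surface and verify that their complement is a disjoint union of open vertical cylinders. The compactness of the fundamental interval combined with the algebraic constraints among the $\alpha^k$ should force the resulting circumferences to be uniformly bounded above and below.

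Combining the two steps: for any $r\neq 0$, write $r=r_0/\alpha^n$ with $r_0$ in the fundamental interval and $n\in\Z$; then $\rel^{(h)}_r x_0 = g_{nt_0}\,\rel^{(h)}_{r_0}x_0$ carries vertical cylinders of circumferences in $[\alpha^n c_{\min},\alpha^n c_{\max}]$. As $r\to\pm\infty$, $n\to+\infty$ and all these circumferences tend uniformly to~$0$; the divergence of $\rel^{(h)}_r x_0$ in $\HH$ is then the usual consequence of the systole leaving every compact subset of the stratum. The main obstacle is the second step: on $x_0$ itself the vertical foliation is minimal and has no saddle connections at all, so the cylinder decomposition on $\rel^{(h)}_r x_0$ must materialise as soon as $r\neq 0$ and vary in a controlled way with $r$ across $[\alpha,1]$. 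The delicate point is to pin down the combinatorial pattern of new vertical saddle connections and verify that it exhausts the whole surface, rather than leaving behind a minimal subcomponent on which the vertical foliation would remain aperiodic.
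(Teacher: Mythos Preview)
Your proposal is correct and follows essentially the same strategy as the paper: reduce via the pseudo-Anosov self-similarity $\rel^{(h)}_{r/\alpha}x_0=\til g\,\rel^{(h)}_r x_0$ to a single fundamental interval, carry out an explicit polygonal computation of the vertical cylinder decomposition there, and then propagate by powers of $\til g$. The paper executes the computation on the interval $[1,\alpha^{-1})$ rather than $[\alpha,1]$ (finding three cylinders at the endpoint and four in the interior, with circumferences $2\alpha^j(1+\alpha^2)$ independent of $r$), and disposes of $r<0$ in one line via the hyperelliptic involution $-I\cdot x_r=x_{-r}$ rather than a parallel computation.
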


There is a symmetry of $x_0$ swapping the vertical and horizontal
directions. (The automorphism $\rho_1$ of \cite{Bowman10} defined under Theorem 2.1 performs this action, though our presentation differs by a diagonal element of $\SL(2,\R)$.) Therefore 
the analogous statement replacing real with imaginary
and vertical with horizontal also holds.

It is well-known that some real-rel trajectories exit their stratum in
finite time due to the collapse of horizontal saddle
connections. In this case the real rel trajectory is not defined for
all time. Theorem \ref{thm: real rel} shows that a real rel trajectory
may be defined for all time and still be divergent in its stratum.

Theorem \ref{thm: real rel} has implications for the study of unique
ergodicity of interval exchange transformations, which we now describe. 
Let $\R^d_+$ denote the vectors in $\R^d$ with positive
entries. 
For a fixed permutation $\sigma$ on $d$ symbols, and 
$\mathbf{a} = (a_1, \ldots, a_d) \in \R^d_+$, let $\mathcal{IE} =
\mathcal{IE}_\sigma(\mathbf{a}): I \to I$ be the interval exchange transformation
obtained by partitioning the interval $I=\left[0, \sum a_i \right)$ into
subintervals of lengths $a_1, \ldots, a_d$ and permuting them
according to $\sigma$. Assuming irreducibility of $\sigma$, it is known that for almost all choices of
$\mathbf{a}$, $\IE_\sigma(\mathbf{a})$ is uniquely ergodic, but
nevertheless that
the set of non-uniquely ergodic interval exchanges is not very
small (see \cite{MT} for definitions, and a survey of this intensively studied
topic). It is natural to expect that all line segments in $\R^d_+$,
other than some obvious 
counterexamples, should inherit the prevalence of uniquely ergodic
interval exchanges. A conjecture in this regard was made by the
second-named author in \cite[Conj. 2.2]{cambridge}, and partial
positive results supporting the conjecture were obtained in 
\cite{MW}. Namely, a special case of \cite[Thm. 6.1]{MW} asserts that 
for any uniquely ergodic $\IE_0 =
  \IE_\sigma(\mathbf{a}_0)$, 
there is an explicitly given hyperplane $\mathcal{L} \subset \R^d_+$
containing $\mathbf{a}_0$, 
such that for any line segment $\ell  = \{\mathbf{a}_s : s \in I\}
\subset \R^d_+$ with $\ell \not \subset \mathcal{L}$, there is an
interval $I_0 \subset I$ containing $0$, such that for almost every $s
\in I_0$, $\IE_\sigma(s)$ is uniquely ergodic. 

\ignore{
\combarak{Instead of stating the result with Yair as you suggested,
  which is technically involved and maybe slows things down, I put in
  an informal discussion in the above paragraph, what do you think?
  Note there is a brief discussion of the paper with Yair in the last
  paragraph of \S \ref{subsection: iets}. There is a version of the
results with Yair (relatively painless special case, but still
slightly intimidating) commented out in
the tex file, if you want to look at it. } \compat{I don't think this
theorem is actually that technical as stated below, and it makes this
work more surprising, so I vote to include it. Maybe less of a
discussion is needed in \S \ref{subsection: iets} if we do this.}

\begin{thm} For any uniquely ergodic $\IE_0 =
  \IE_\sigma(\mathbf{a}_0)$, 
there is an explicit hyperplane $\mathcal{L} \subset \R^d_+$
containing $\mathbf{a}_0$, 
such that for any line segment $\ell  = \{\mathbf{a}_s : s \in I\}
\subset \R^d_+$ with $\ell \not \subset \mathcal{L}$, there is an
interval $I_0 \subset I$ containing $0$, such that for almost every $s
\in I_0$, $\IE_\sigma(s)$ is uniquely ergodic. 
\end{thm}

}
Nevertheless our results provide a  strong
counterexample to \cite[Conj. 2.2]{cambridge}. Recall that a standard
construction of an interval exchange transformation, is to fix a
translation surface $q$ with a segment $\gamma$ transverse to vertical
lines, and define $\IE(q, \gamma)$ to be the first return map to
$\gamma$ along vertical leaves on $q$ (where we parametrize $\gamma$
using the transverse measure $dx$). If $L = \{x(r) : r \in I\}$ is a
sufficiently small 
straight line segment in a stratum of
translations surfaces, $\gamma$ can be chosen uniformly for all $r \in
I$ and the interval exchanges $\IE(x(r), \gamma)$ can be written as
$\IE_{\sigma} (\mathbf{a}(r))$ for some fixed permuation $\sigma$ and some
line $\ell = \{\mathbf{a}(r) : r \in I\} \subset \R^d_+$ (see
\cite{MW} for more details). Thus, taking $x(r) = \rel^{(h)}_r x_0$,
Theorem \ref{thm: real rel} implies:

\begin{cor}\name{cor: Boshernitzan}
There is a uniquely ergodic self-similar interval exchange transformation $\IE_0$
and a line segment $\ell \subset \R^d_+$ such that $\IE_0 =
\IE_\sigma(\mathbf{a}_0)$ with $\mathbf{a}_0$ in the interior of $\ell$,
and such that for all $\mathbf{a} \in \ell \sm \{\mathbf{a}_0\}$,
$\IE_\sigma(\mathbf{a})$ is periodic.  
\end{cor}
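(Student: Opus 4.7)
The plan is to take $\IE_0$ to be the first-return map of the vertical flow on $x_0$ to a well-chosen horizontal transversal $\gamma$, and to obtain $\ell$ by varying the surface along the real-rel trajectory from Theorem \ref{thm: real rel}.

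First I would fix a horizontal transversal $\gamma$ on $x_0$ meeting every vertical leaf---for instance, a union of short horizontal segments issuing from each singularity, arranged so that the first-return of the vertical flow to $\gamma$ is defined almost everywhere. The resulting IET $\IE_0 \df \IE(x_0,\gamma) = \IE_\sigma(\mathbf{a}_0)$ is uniquely ergodic because the vertical foliation of $x_0$ is expanded by the Arnoux--Yoccoz pseudo-Anosov, and it is self-similar because that pseudo-Anosov induces a Rauzy--Veech renormalization on $\gamma$ sending $\IE_0$ back to itself after rescaling.

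Next, for $r$ in some small interval $I$ containing $0$, the segment $\gamma$ remains a transversal for the vertical foliation of $x(r) \df \rel_r^{(h)} x_0$: since real-rel preserves all vertical holonomies, the vertical saddle connections that cut $\gamma$ into its subintervals only shift horizontally and, for small $r$, retain the same combinatorial pattern of intersections with $\gamma$. Thus $\IE(x(r),\gamma) = \IE_\sigma(\mathbf{a}(r))$ for a fixed permutation $\sigma$, with $\mathbf{a}(r)$ depending affinely on $r$. After shrinking $I$ if necessary to guarantee positivity of all entries, the set $\ell \df \{\mathbf{a}(r) : r \in I\}$ is a line segment in $\R^d_+$ containing $\mathbf{a}_0$ in its interior.

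The final step invokes the second assertion of Theorem \ref{thm: real rel}: for every $r \neq 0$, $x(r)$ admits a vertical cylinder decomposition, so the vertical flow on $x(r)$ is completely periodic and the first-return IET $\IE_\sigma(\mathbf{a}(r))$ has all orbits periodic. Combined with the unique ergodicity and self-similarity of $\IE_0$, this gives the corollary. The only technical point, which I do not expect to present a serious obstacle, is justifying the uniform choice of transversal and constancy of $\sigma$ across $\ell$; this reduces to a continuity argument for the finitely many vertical saddle connections defining the partition of $\gamma$, whose combinatorial position relative to the endpoints of $\gamma$ is preserved on a small enough interval of rel-parameters.
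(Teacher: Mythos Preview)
Your proposal is correct and follows essentially the same route as the paper: the paper derives the corollary directly from Theorem~\ref{thm: real rel} by taking $\IE_0$ to be the return map of the vertical flow on $x_0$ to a horizontal transversal, observing that a transversal can be chosen uniformly along a short piece of the real-rel trajectory (referring to \cite{MW} for this point), and then invoking complete periodicity of the vertical direction on $x_r$ for $r\neq 0$. Your justification of unique ergodicity and self-similarity via the pseudo-Anosov is exactly the intended one; the only small imprecision is that the discontinuities cutting $\gamma$ are not ``vertical saddle connections'' but downward vertical separatrices from the singularities, though this does not affect the argument.
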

In contrast to the real-rel leaf, for the full rel leaf we have:

\begin{thm} \name{thm: rel}
The rel leaf of $x_0$ is dense in 
$\HH$. 
\end{thm}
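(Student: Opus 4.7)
The plan is to show that the closure $Y \df \overline{L}$ of the rel leaf $L$ of $x_0$ exhausts $\HH$. The starting point is the Hubert-Lanneau-M\"oller theorem \cite{HLM}, which shows $\overline{G x_0} = \HH$, and the Eskin-Mirzakhani-Mohammadi theorem \cite{EMM2}, which implies that any closed, $G$-invariant subset of $\HH$ containing $x_0$ equals $\overline{Gx_0}=\HH$. Hence it suffices to upgrade the tautological rel-invariance of $Y$ to $G$-invariance.

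A first source of symmetry of $Y$ is the pseudo-Anosov affine automorphism $\phi$ of $x_0$, whose derivative $g_0 = D\phi \in G$ is a hyperbolic diagonal element. Since affine automorphisms permute rel leaves and rescale rel vectors by their derivatives, $\phi L = L$ and therefore $\phi Y = Y$. Thus $Y$ is invariant under the rel flows $\rel_v$, $v \in \R^2$, and under conjugation by $\phi$, giving a semidirect product $\R^2 \rtimes \langle \phi \rangle$ of symmetries of $Y$ in which $\phi$ acts on $\R^2$ by $g_0$. This by itself is not enough, however, since $\phi$ is a single element of $G$ and the rel group $\R^2$ acts trivially on the $G$-orbit space.

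The main new input is Theorem \ref{thm: real rel}. As $r \to \infty$ the real-rel trajectory $\rel_r^{(h)} x_0$ diverges in $\HH$, with vertical cylinder circumferences tending uniformly to zero; applying appropriate powers of $g_0$ (acting on $\HH$, not on rel parameters) renormalizes these collapsing cylinders back to macroscopic size and produces nontrivial limit surfaces in $\HH$. All such limits lie in $Y$, and the vertical-horizontal symmetry of $x_0$ (noted after Theorem \ref{thm: real rel}) gives a parallel construction for imaginary-rel. The combinatorial and metric structure of these renormalized limits can be tracked explicitly from the presentation of $x_0$ in Figure \ref{fig:AY}, using the arithmetic self-similarity encoded by the Arnoux-Yoccoz constant $\alpha$. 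Saturating the resulting family of limit surfaces by rel and by $\langle\phi\rangle$ should produce a set dense in a full $G$-orbit transverse to $L$, which then propagates to $G$-invariance of $Y$.

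The main obstacle will be the final step: proving that the renormalized limits of real- and imaginary-rel trajectories, together with their rel saturations, fill out enough $G$-translates of $x_0$ rather than being trapped in a proper subvariety. The difficulty is structural: the rel foliation is not $G$-invariant, so $G$-invariance of $Y$ cannot be argued abstractly and must instead be extracted from explicit rel dynamics on $x_0$ and quantitative control of the degeneration in Theorem \ref{thm: real rel}. A secondary technical point is avoiding the indeterminacy locus of the rel flow (where horizontal saddle connections between distinct singularities collapse): while the trajectory of $x_0$ itself avoids this locus by Theorem \ref{thm: real rel}, one must verify that the renormalized limits are generically defined well enough to iterate the construction and close up to a $G$-invariant set.
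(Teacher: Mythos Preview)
Your proposal rests on a factual error that undermines the entire strategy. The Hubert--Lanneau--M\"oller theorem does \emph{not} give $\overline{Gx_0}=\HH$; it gives $\overline{Gx_0}=\LL$, the hyperelliptic locus, which has real codimension $2$ in $\HH$ (see Theorem~\ref{thm: HLM} and Proposition~\ref{prop: HLM}). Consequently, even if you succeeded in showing that $Y$ is $G$-invariant, you would only conclude $Y\supset\LL$, not $Y=\HH$. Your reduction ``it suffices to upgrade rel-invariance to $G$-invariance'' is therefore false.

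Beyond this, the mechanism you propose for producing extra points in $Y$ is both vague and misdirected. The paper does not look at $r\to\infty$ along the real-rel ray and renormalize by powers of $\til g$; that limit is at infinity in $\HH$ and yields nothing inside the stratum. Instead, the paper works with $r\to 0^+$ and analyzes, for each $x_r=\rel^{(h)}_r x_0$, the closure $\mathcal{O}_r$ of the \emph{vertical} rel orbit. The key computation is that $\mathcal{O}_r$ is a $3$-torus (because the four cylinder circumferences $2\alpha^i(1+\alpha^2)$ span a $3$-dimensional $\Q$-vector space), and that the tangent spaces $T_r$ to these tori, pushed by the pseudo-Anosov action as $r\to 0$, accumulate on the tangent direction to $V$ (Theorem~\ref{thm: crucial}). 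This gives $Vx_0\subset Y$, and then $\overline{Vx_0}=\LL$ by Corollary~\ref{cor: corollary V}. Your sketch contains no analogue of this torus argument.

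Finally, you are missing the endgame entirely: once $\LL\subset Y$, one uses that rel is \emph{transverse} to $\LL$ (Proposition~\ref{prop: HLM}) so that the rel-saturation of $\LL$ contains an open set $\mathcal{U}\subset Y$; then $G$-equivariance of rel (Proposition~\ref{prop: rel and G commutation}) together with $G$-invariance of $\LL$ shows $G\mathcal{U}\subset Y$, and ergodicity of the $G$-action on $\HH$ finishes the proof. This step is essential precisely because $\overline{Gx_0}\neq\HH$, and nothing in your outline addresses it.
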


This is the first stratum in which an
explicit 
dense rel leaf has been described. 
Although we only discuss the rel
leaf of the Arnoux-Yoccoz surface in this paper, it is quite likely
that the rel foliation is ergodic whenever $k>1$, and thus almost all rel
leaves are dense in all relevant strata. Our work establishes
 the existence of dense leaves in $\HH=\HH(2,2)^{\mathrm{odd}}$ but our
arguments can be applied in greater generality. We hope to return
to the general case in future work. Recently 
 Calsamiglia, Deroin and Francaviglia \cite{Deroin} have announced a result
 which  implies ergodicity and also exhibits many explicit dense rel leaves in principal strata,
 i.e. strata all of whose singularities are simple. Their method is
 very different from the one used in this paper. 

We have focused on the Arnoux-Yoccoz surface in genus $3$ but in fact
Arnoux and Yoccoz define one surface in each genus $g \geq 3$,
belonging to $\HH(g-1, g-1)^{\mathrm{odd}}$. (Bowman also gives a treatment of these surfaces \cite{Bowman}.) Many of 
the results of this paper can be generalized to larger $g$. 
Although Theorem \ref{thm: real rel} is not true for $g\geq 4$, the
analogous statement obtained by switching the roles of the vertical and horizontal
directions and imaginary and real rel, is true. (Note that in contrast
to $g=3$, for $g \geq 4$, the Arnoux-Yoccoz surface
  does not seem to have a symmetry swapping the horizontal and vertical
directions.) Also for many $g \geq 4$ (for all such $g \leq 1000$ 
and all such prime $g$), the analogue of Theorem \ref{thm: rel} is true. We
hope to return to this topic elsewhere. 

We now briefly comment on the proofs of our results. 
Since there is a pseudo-Anosov map $\varphi$ fixing $x_0$, there is a
corresponding nontrivial diagonal element $\til g$ with $\til g
x_0=x_0$, and applying this element to  the stratum defines a 
map which acts on the real-rel trajectory $\left\{\rel^{(h)}_rx_0: r
  \in \R
\right\}$ of $x_0$ as an expansion moving points away from 
$x_0$. The proof of Theorem \ref{thm: real rel} relies on an explicit
elementary computation, made
in \S \ref{sec: periods},  which shows that deforming $x_0$ along its
real-rel leaf introduces periodic cylinders, and these cylinders
persist for a full period of the action of $\til g$ on
$\left\{\rel^{(h)}_rx_0: r>0 \right\}$. Our computation does not rely on any prior
theory but to put it  in context, we note
that the surfaces $\rel^{(h)}_rx_0$ have vanishing SAF invariant in
the vertical direction, and the `drift' for the interval exchange
obtained by moving along the vertical direction on $\rel^{(h)}_rx_0$
is sublinear and can be modeled arithmetically in the ring of integers
in a certain cubic field. We refer to \cite{Arnoux88, LPV,
  McMullen-cascades} for discussions of this fascinating
topic. 

In \S \ref{sec: minimal sets} we analyze the interaction of rel
and the horocycle flow on surfaces which have a decomposition into
parallel cylinders, where both rel and the horocycle
flow fix the waist direction of the cylinders. It will be convenient
to choose this direction to be vertical. It turns out that
when there are no vertical saddle connections joining distinct
singularities, both the horocycle and rel flows are given by
linear flows on a torus defined by the twist parameters of the
cylinders in a vertical cylinder decomposition. This parametrization
in terms of twist parameters 
was used for the horocycle flow in \cite{calanque} and is extended
here to rel trajectories. In these coordinates, the horocycle flow orbit-closure is determined
by the moduli of the cylinders while the rel orbit-closure is
determined by their circumferences. This simple observation enables us
to pick up additional invariance in the closure of the rel leaf.
Namely, we find a
family of 3-dimensional tori,
$\{\mathcal{O}_r\}$ defined for all but a discrete set of $r>0$,
each of which is the closure
of the vertical rel leaf of $\rel^{(h)}_rx_0$. 

Our explicit parameterization
enables us to compute the behavior of the tangent planes $T_r$ to
$\mathcal{O}_r$, 
as $r \to 0+$. In \S \ref{sec: endgame} we show that the topological
limit of the $\mathcal{O}_r$ 
contains the entire $V$-orbit of $x_0$, where $V =
\left\{ \left (\begin{matrix} 1 & 0 \\ s & 1 \end{matrix} \right ) : s
\in \R \right \}$ gives the vertical horocycle flow. From this, using \cite{EMM2}, we
deduce that the same topological limit contains the $G$-orbit of $x_0$, and hence, by
\cite{HLM}, also contains the entire hyperelliptic locus $\LL \subset \HH$. 
We then exploit the commutation relations between the rel
foliation and the $G$-action, and ergodicity of the $G$-action, to
conclude the proof of Theorem \ref{thm: rel}.

\subsection{Acknowledgements} This work was stimulated by insightful
comments of Michael Boshernitzan, who conjectured Corollary \ref{cor:
  Boshernitzan}. We are also grateful to
David Aulicino, Josh Bowman, Duc-Manh Nguyen, John Smillie  and Alex Wright for useful
discussions. This collaboration was supported by BSF grant 2010428.
The first author's work is supported by N.S.F. Grant DMS-1101233 and a PSC-CUNY
Award (funded by The Professional Staff Congress and The City University of New
York). The second authors' work was supported by  ERC
started grant DLGAPS 279893.

\section{Basics}\name{sec: basics} 
\subsection{Translation surfaces, strata, $G$-action, cylinders}
\name{sec: translation surfaces}
In this section we define our objects of study and review their basic
properties. We refer to \cite{MT, zorich survey} for more information
on translation surfaces and related notions, and for references for
the statements given in this subsection.

Let $S$ be a compact oriented surface of genus $g \geq 2$, let $\Sigma
= \{\xi_1, \ldots, 
\xi_k\} \subset S$ and let $\mathbf{r} = (r_1,
\ldots, r_k)$ be non-negative integers such that $\sum r_i = 2g-2$. 
A {\em translation atlas} of type $\mathbf{r}$ on $(S, \Sigma)$
is an atlas of charts $(U_{\alpha}, \varphi_{\alpha})$,
where:
\begin{itemize}
\item For each $\alpha$, the set $U_\alpha \subset S\sm \Sigma$ is open, and the map
$$\varphi_\alpha:U_\alpha \to \R^2$$
is continuous and injective. 
\item Whenever the sets $U_\alpha$ and $U_\beta$ intersect, the
  transition functions are local translations, i.e., the maps
$$
\varphi_{\beta} \circ
  \varphi^{-1}_{\alpha}: \varphi_\alpha(U_\alpha \cap U_\beta) \to
\R^2
$$
are differentiable with derivative equal to the
identity. 
\item around each $\xi_j \in
\Sigma$ The charts glue together to form a cone point with cone angle 
$2\pi(r_j+1)$. 
\end{itemize}
A {\em translation surface structure} on $(S,\Sigma)$ of type $\mathbf{r}$ is an equivalence class of such translation atlases, where $(U_{\alpha}, \varphi_{\alpha})$ and $(U'_{\beta},
\varphi'_{\beta})$ are equivalent if there is an orientation preserving  
homeomorphism $h: S\to S$, fixing all points of $\Sigma$, such that 
$(U_{\alpha}, \varphi_{\alpha})$  
is compatible with  $\left(h(U'_{\beta}), \varphi'_{\beta} \circ h^{-1}\right).$
A {\em marked translation surface structure} is an equivalence class of such atlases
subject to the finer equivalence relation where $(U_{\alpha}, \varphi_{\alpha})$ and $(U'_{\beta},
\varphi'_{\beta})$ are equivalent if $h$ can be taken to be isotopic to the identity via an isotopy fixing $\Sigma$.
Thus, a marked translation surface $\bq$ determines a translation surface
$q$ by {\em forgetting the marking}, and we write $q =\pi(\bq)$ to denote this operation.
Note that our convention is that all singularities are labeled.

Pulling back $dx$ and $dy$ from the coordinate charts we obtain two
well-defined closed 1-forms, which we can integrate along any path
$\gamma$ on $S$. If $\gamma$ is a cycle or has endpoints in $\Sigma$
(a relative cycle), then we define
$${\mathrm x}(\gamma, \bq)=\int_\gamma dx \quad \text{and} \quad {\mathrm y}(\gamma, \bq)=\int_\gamma dy.$$
These integrals only depend on the homology class of $\gamma$ in
$H_1(S, \Sigma)$ and the pair of these integrals is the {\em holonomy}
of $\gamma$, 
\eq{eq: defn hol}{\hol(\gamma, \bq) = \left(\begin{matrix} {\mathrm x}(\gamma, \bq) \\
{\mathrm y}(\gamma, \bq) \end{matrix} \right) \in \R^2.
}
We let $\hol(\bq) = \hol(\cdot, \bq)$ be the corresponding element of $H^1(S, \Sigma;
\R^2)$, with coordinates ${\mathrm x}(\bq)$ and ${\mathrm y}(\bq)$ in $H^1(S, \Sigma; \R)$. 
A {\em saddle connection} for a translation surface $q$ is a straight segment which connects
singularities and does not contain singularities in its interior.

 The set of all (marked) translation surfaces on $(S,
\Sigma)$ of type $\mathbf{r}$ is called the {\em stratum of
(marked) translation surface of type $\mathbf{r}$} and is denoted by
$\HH(\mathbf{r})$ (resp. $\HH_{\mathrm{m}}(\mathbf{r})$). 
The map $\hol: \HH_{\mathrm{m}} (\mathbf{r})\to H^1(S, \Sigma; \R^2)$ just defined gives
local charts for $\HH_{\mathrm{m}} (\mathbf{r})$, endowing it (resp. $\HH(\mathbf{r})$) with the
structure of an affine manifold (resp. orbifold).

Let $\Mod(S, \Sigma)$ denote the mapping class group, i.e. the
orientation preserving homeomorphisms of $S$ fixing $\Sigma$
pointwise, up to an isotopy fixing $\Sigma$. 
The map hol is $\Mod(S, \Sigma)$-equivariant. 
%
The 
$\Mod(S, \Sigma)$-action on $\HH_{\mathrm{m}}$ 
is properly discontinuous. Thus $\HH (\mathbf{r})= \HH_{\mathrm{m}} (\mathbf{r})/\Mod(S, \Sigma)$ is a
linear orbifold and $\pi: \HH_{\mathrm{m}} (\mathbf{r})\to \HH (\mathbf{r})$ is an orbifold covering
map. 
We have
\eq{eq: dimension of HH}{
\dim \HH (\mathbf{r})= \dim \HH_{\mathrm{m}} (\mathbf{r})= 
\dim H^1(S, \Sigma; \R^2)= 2(2g+k-1).
}

There is an action of $G = \SL_2(\R)$ on $\HH(\mathbf{r})$ and on
$\HH_{\mathrm{m}}(\mathbf{r})$ by post-composition on each 
chart in an atlas. The projection $\pi : \HH_{\mathrm{m}}(\mathbf{r}) \to \HH(\mathbf{r})$ is $G$-equivariant. 
The $G$-action is linear in the homology coordinates, namely, given a
marked translation surface structure $\bq$ and $\gamma \in 
H_1(S, \Sigma)$, and given $g \in G$, we have
\eq{eq: G action}{
\hol(\gamma, g\bq) = g \cdot \hol(\gamma, \bq),
}
where on the right hand side, $g$ acts on $\R^2$ by matrix
multiplication. 

We will write 
\[
u_s 
= \left(\begin{array}{cc} 1 & s \\ 0 & 1 
\end{array}
\right),
 \ \ \ \ \, \ 
g_t = \left(\begin{array}{cc} e^{t} & 0 \\ 0 & e^{-t} 
\end{array}
\right), \ \ \ \ \ 
v_{s}=
\left(\begin{array}{cc}
1 & 0 \\
s & 1 
\end{array}
\right).
\]
Also we will denote
$$
U = \{u_s : s\in \R\}, \ \ A = \{g_t: t \in \R \}, 
$$
$$
V = \{v_s: s
\in \R\}, \ \ P=AU =
\left(\begin{matrix} * & * \\ 0 & * \end{matrix} 
\right) \subset G. 
$$

The connected components of strata $\HH(\mathbf{r})$ have been
classified by Kontsevich and Zorich. We will be interested in the
particular connected component $\HH(2,2)^{\mathrm{odd}}$ of
$\HH(2,2)$, since it is the component containing $x_0$. For any $\mathbf{r}$, the area of surfaces in
$\HH(\mathbf{r})$ is preserved by the action of $G$ and we  let $\HH$
be a fixed-area sub-locus of a connected component
of $\HH(\mathbf{r})$. The convention usually adopted in the literature
is to normalize area by setting
$\HH$ to be the locus of area-one surfaces, but it will be more
convenient for us to fix the area equal to some constant, e.g. the area of $x_0$.
There is a globally supported measure on $\HH$ which is defined using 
Lebesgue measure on $H^1(S, \Sigma; \R^2)$ and a `cone construction'. It was shown by Masur that the
$G$-action is ergodic with respect to this measure, and in particular,
almost every $G$-orbit is dense. 

An {\em affine automorphism} of a translation surface $q$ is a
homeomorphism of $q$ which leaves invariant the set of singular points and which
is affine in charts. Some authors require affine automorphisms to
preserve orientation but we will allow orientation reversing affine
automorphisms. The derivative of an affine automorphism is a $2 \times
2$ real matrix of determinant $\pm 1$. If this matrix is hyperbolic (i.e. has distinct real
eigenvalues) then the affine automorphism is called a {\em
  Pseudo-Anosov map}. If the matrix is parabolic (i.e. is nontrivial
and has both eigenvalues equal to 1) then the affine automorphism is
called {\em parabolic}. The group of derivatives of orientation
preserving affine automorphisms of $q$ is called the {\em Veech
  group} of $q$. 

Let $I \subset \R$ be a closed interval with interior, let $c>0$ and
let $\R/c\Z$ be the circle of circumference $c$. 
A {\em cylinder} on a translation surface is a subset homeomorphic to
an annulus
which is the image of $I \times \R /c\Z$ for some $I$ and  $c$ as above, under a map
which is a local Euclidean isometry, and which is maximal in the sense
that the local isometry does not extend to $J \times \R/c\Z$ for an
interval $J$ properly containing $I$. The parameter $c$ is called the
{\em circumference} of the cylinder, and the image of $\{t\} \times
\R/c\Z$ for some $t \in \mathrm{int}( I)$ is called a {\em core
  curve}. In this case the two boundary components of the cylinder are
unions of saddle connections whose holonomies are all parallel to that
of the core curve. If a
translation surface $q$ can be
represented as a union of cylinders, which intersect along their
boundaries, then the directions of the holonomies of the
core curves of the cylinders are all the same, and we say that this
direction is {\em completely periodic} and that  $q$ {\em has
a cylinder decomposition} in that direction. 

\subsection{Rel and Real Rel}
We describe the foliation rel as a foliation on 
$\HH_{\mathrm{m}}(\mathbf{r})$ which descends to a well-defined foliation on
$\HH(\mathbf{r})$. 
We view our cohomology classes as linear maps from the associated homological spaces.
Observe there is a restriction map 
$$\mathrm{Res}:H^1(S, \Sigma ; \R^2) \to H^1(S; \R^2)$$
which is obtained by mapping a cochain $H_1(S, \Sigma; \R) \to \R^2$
to its restriction to the `absolute periods'
$H_1(S; \R) \subset H_1(S, \Sigma; \R)$. This restriction map is part of the exact sequence in cohomology, 
\eq{eq: defn Res}{
H^0(S;\R^2) \to H^0(\Sigma ; \R^2) \to H^1(S, \Sigma ; \R^2)
\stackrel{\mathrm{Res}}{\to} H^1(S; \R^2) \to \{0\},
}
and we obtain a natural subspace 
$$\mathfrak{R} =\ker \mathrm{Res} \subset H^1(S, \Sigma;
\R^2),$$ consisting of the cohomology classes which
vanish on $H_1(S; \R) \subset
H_1(S, \Sigma; \R).$ 
Since
the sequence \equ{eq: defn Res} is invariant under homeomorphisms in
$\Mod(S, \Sigma)$, the subspace 
$\mathfrak{R}$ 
is $\Mod(S, \Sigma)$-invariant. 
Since hol is equivariant with respect to the action of the group $\Mod(S,
\Sigma)$ on $\HH_{\mathrm{m}}(\mathbf{r})$ and $H^1(S, \Sigma;
\R^2)$, 
the foliation of $H^1(S, \Sigma;
\R^2)$ by cosets of the subspace $\mathfrak{R}$ induces by pullback a
foliation of $\HH_{\mathrm{m}}(\mathbf{r})$, and descends to a well-defined
foliation on $\HH(\mathbf{r}) = \HH_{\mathrm{m}}(\mathbf{r})/\Mod(S,
\Sigma)$. The area of a translation surface can 
be computed using the cup product pairing in absolute cohomology and
hence the foliation preserves the area of surfaces, and in particular
we obtain a foliation of a fixed  area  sublocus $\HH$ (see
\cite{BSW} for more details). This foliation is called the {\em rel} 
foliation.  Two nearby translation
surfaces $q$ and $q'$ are in the same plaque if the integrals of the flat structures along all 
closed curves are the same on $q$ and $q'$. 
Intuitively, $q'$ is obtained from $q$ by fixing one singularity as a
reference point and moving the other singularity.
Recall our convention that singularities are labeled, that is $\Mod(S,
\Sigma)$ does not permute the singular points. Using this one can show
that $\Mod(S, \Sigma)$ acts trivially on $\mathfrak{R} \cong H^0(\Sigma; \R)/H^0(S, \R)$ and hence 
the leaves of the rel foliation
are equipped with a natural translation structure, modeled on $\mathfrak{R}$. 
 The leaves of the rel foliation have (real)
dimension $2(k-1)$ 
(where $k=|\Sigma|$). In this paper we will focus on the case $k=2$, so that
rel leaves are 2-dimensional. We can integrate a cocycle $c  \in
\mathfrak{R}$ on any path joining distinct singularities and the
resulting 
vector in $\R^2$ will be independent of the path, since any two paths
differ by an element of $H_1(S)$. Thus in the case $k=2$ we obtain an  identification of $\mathfrak{R}$ with
$\R^2$ by the map $u \mapsto u(\delta)$ for any path joining the
singularities. Our convention for this identification will be that we
take a path $\delta$  oriented from  $\xi_1$ to $\xi_2$.

The existence of a translation structure on rel leaves implies that
any vector $u \in \mathfrak{R}$ determines
an everywhere-defined vector field on $\HH$. We can apply standard
facts about ordinary differential equations to integrate this vector
field. This gives rise to paths $\psi(t) = \psi_{q,u}(t)$ such that
$\psi(0) = q$ and $\frac{d}{dt} \psi(t) \equiv u.$ We will denote the maximal domain
of definition of $\psi_{q,u}$ by $I_{q,u}$. When  $1 \in
I_{q,u}$ we will say that $\rel^uq$ is defined and write
$\psi_{q,u}(1) = \rel^uq$. Also, in the case $k=2$  we will write 
$$
\rel^{(h)}_rq = \rel^uq \text{ when } u = (r,0),$$
and 
$$\rel^{(v)}_sq = \rel^uq \text{ when } u = (0,s). 
$$
These trajectories are called respectively the {\em real-rel} and {\em
  imaginary-rel} trajectories. 
We will use identical notations for $\bq \in \HH_{\mathrm{m}}(\mathbf{r})$, noting that
since $\pi: \HH_{\mathrm{m}} (\mathbf{r})\to \HH(\mathbf{r})$ is an orbifold covering map, $I_{\bq, u} =
I_{q,u}$ and $\pi(\rel^u \bq) = \rel^u q$. 

Note that the trajectories need not be defined for all time, i.e. $I_{q,u}$
need not coincide with $\R$. For instance this will happen when 
a saddle connection
on $q$ is made to have length zero, i.e. if `singularities
collide'. It was shown in \cite{MW} that this is the only
obstruction to completeness of leaves. Namely, in the case $k=2$, the following holds: 

\begin{prop}\name{thm: real rel main}
Let $\HH$ be a stratum with two singular points, let $\bq \in
\HH_{\mathrm{m}}$, and let $u \in \mathfrak{R}$. Then 
the following are equivalent: 
\begin{itemize}
\item 
$\rel^u\bq $ is defined. 
\item
For all saddle connections $\delta$ on $\bq$, and all $s \in [0,1]$,
$$\hol(\bq, \delta) + s \cdot u( \delta)  \neq 0.$$
\end{itemize}
\end{prop}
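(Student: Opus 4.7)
The proof rests on two ingredients. First, the rel flow is affine in period coordinates: whenever $\psi(s) \df \psi_{\bq,u}(s)$ is defined,
\[
\hol(\psi(s), \gamma) = \hol(\bq, \gamma) + s \cdot u(\gamma)
\]
for every $\gamma \in H_1(S, \Sigma; \Z)$. Second, Masur's compactness criterion asserts that a sequence in $\HH_{\mathrm{m}}$ exits every compact set of the stratum exactly when the length of its shortest saddle connection tends to zero.

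For the forward direction, I would assume $\rel^u \bq$ is defined and let $\delta$ be a saddle connection on $\bq$. If $[\delta]$ is an absolute class in $H_1(S;\Z)$, then $u(\delta) = 0$ since $u \in \mathfrak{R}$ vanishes on $H_1(S;\R)$, and $\hol(\bq, \delta) \neq 0$ because $\delta$ has positive length; the condition is then immediate. For relative $[\delta]$, the strategy is to track $\delta$ along the flow. By continuity and the discreteness of saddle-connection holonomies on each fixed surface, $\delta$ is represented on $\psi(s)$ by a concatenation of saddle connections whose total holonomy equals $\hol(\bq,\delta) + s \cdot u(\delta)$. At a first time $s_0 \in (0, 1]$ at which this quantity vanished, a careful accounting of the splitting events along the flow would produce a saddle connection on $\psi(s_0)$ with zero holonomy, contradicting $\psi(s_0) \in \HH_{\mathrm{m}}$.

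For the backward direction, I would argue by contradiction. Assuming the non-vanishing condition, let $T^* \df \sup\{T : \psi_{\bq,u} \text{ is defined on } [0, T]\}$ and suppose $T^* \leq 1$. Masur's criterion then yields a sequence $s_n \nearrow T^*$ and saddle connections $\eta_n$ on $\psi(s_n)$ with $|\hol(\psi(s_n), \eta_n)| \to 0$. Because the set of holonomies on $\bq$ of homology classes realized by curves of bounded length is discrete in $\R^2$, and because $u(\eta_n)$ depends only on the coset of $[\eta_n]$ in $H_1(S,\Sigma;\Z)/H_1(S;\Z)$, one can pass to a subsequence in which $[\eta_n]$ stabilizes to a common class $[\eta]$ satisfying $\hol(\bq, \eta) + T^* \cdot u(\eta) = 0$. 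A further tracking argument, following the short saddle connections $\eta_n$ backward through the finitely many splittings and mergings they have undergone during the flow, promotes this homology class to an actual saddle connection $\delta$ on $\bq$ with $\hol(\bq,\delta) + T^* u(\delta) = 0$, contradicting the hypothesis at $s = T^* \leq 1$.

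The main obstacle, needed in both directions, is the combinatorial tracking of saddle connections through the rel flow. The linearity in period coordinates makes the cohomological picture transparent, but the collection of saddle connections on $\psi(s)$ varies discontinuously in $s$, so translating the vanishing of a cohomological quantity at a boundary time into the existence of an actual saddle connection on $\bq$ witnessing that vanishing requires a delicate analysis of splitting and merging events. This analysis is the technical heart of the argument of Minsky and Weiss in \cite{MW}.
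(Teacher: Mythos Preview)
The paper does not prove this proposition at all: it is stated as a quotation of a result from \cite{MW}, introduced by the sentence ``It was shown in \cite{MW} that this is the only obstruction to completeness of leaves.'' Your proposal is therefore not competing with any argument in the paper; rather, you are sketching the Minsky--Weiss proof that the paper cites, and you explicitly acknowledge as much in your final paragraph. In that sense the proposal and the paper are in complete agreement on the approach---both point to \cite{MW}.

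As a sketch of the \cite{MW} argument your outline is broadly correct, but be aware that the steps you flag as needing ``careful accounting'' and ``tracking'' are genuinely the hard part and are not resolved by what you have written. In the forward direction, knowing that a concatenation of saddle connections has total holonomy zero does not by itself yield a single saddle connection of zero length; and in the backward direction, stabilizing a homology class and then promoting it to an actual saddle connection on the original surface is exactly the nontrivial content. You correctly identify these as the technical heart of \cite{MW}, so the honest summary is: your proposal reduces the proposition to the Minsky--Weiss tracking analysis, which is precisely what the paper does by citation.
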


\begin{cor}\name{cor: real rel main}
If $q$ has two singular points and no horizontal (resp. vertical) saddle connections joining
distinct singularities, then $\rel^{(h)}_rq$ (resp. $\rel^{(v)}_sq$)
is defined for all $r,s\in \R$. 
\end{cor}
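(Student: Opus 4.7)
The plan is to apply Proposition \ref{thm: real rel main} directly. Lift $q$ to a marked surface $\bq \in \HH_{\mathrm{m}}$; since $I_{\bq, u} = I_{q, u}$, it suffices to work with $\bq$. Fix $r \in \R$ and set $u \in \mathfrak{R}$ to be the element corresponding to $(r, 0) \in \R^2$ under the identification $\mathfrak{R} \cong \R^2$ described in the excerpt. I want to verify, for every saddle connection $\delta$ on $\bq$ and every $s \in [0, 1]$, that
\[
\hol(\bq, \delta) + s \cdot u(\delta) \neq 0;
\]
the proposition then gives that $\rel^u \bq$ is defined.

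The key is a case split according to the endpoints of $\delta$. If $\delta$ is a saddle connection from $\xi_i$ to itself, then $\delta$ represents a class in $H_1(S; \R) \subset H_1(S, \Sigma; \R)$, so $u(\delta) = 0$ because $u \in \mathfrak{R} = \ker \mathrm{Res}$; in this case the vector in question is just $\hol(\bq, \delta)$, which is nonzero since saddle connections have positive length. If instead $\delta$ connects the two distinct singularities, then by the convention for the identification $\mathfrak{R} \cong \R^2$ one has $u(\delta) = \pm (r, 0)$. Writing $\hol(\bq, \delta) = (x, y)$, the hypothesis that $q$ has no horizontal saddle connection joining distinct singularities forces $y \neq 0$, and hence
\[
\hol(\bq, \delta) + s \cdot u(\delta) = (x \pm s r,\; y)
\]
is nonzero for every $s$. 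This covers all saddle connections, so Proposition \ref{thm: real rel main} applies and $\rel^u \bq = \rel^{(h)}_r \bq$ is defined.

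Since $r \in \R$ was arbitrary, and since the rel vector field is linear in $u$ so that $\psi_{\bq,(r,0)}(1) = \psi_{\bq,(1,0)}(r)$, we conclude $I_{q,(1,0)} = \R$; that is, the horizontal rel trajectory of $q$ is defined for all time. The vertical case is handled identically by replacing $(r,0)$ with $(0, s)$ and using the first coordinate of $\hol(\bq, \delta)$ in place of the second. The argument is essentially bookkeeping on top of Proposition \ref{thm: real rel main}, and the only real content is the observation that the rel deformation in a purely horizontal (resp.\ vertical) direction cannot shorten a saddle connection whose holonomy has nonzero vertical (resp.\ horizontal) component.
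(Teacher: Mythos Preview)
Your proof is correct and is exactly the intended deduction from Proposition~\ref{thm: real rel main}; the paper does not spell out a proof for this corollary, treating it as immediate from the proposition, and your case split on the endpoints of $\delta$ together with the observation that the vertical coordinate of $\hol(\bq,\delta)$ is untouched by a purely horizontal rel vector is precisely the content that makes it immediate.
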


From standard results about ordinary differential equations we have
that 
the map $(q,u) \mapsto \rel^u q$ is continuous on its domain of
definition, and 
$$
\rel^{(h)}_{r_1}(\rel^{(h)}_{r_2} q) = \rel^{(h)}_{r_1+r_2 } (q), \ \
\  \rel^{(v)}_{s_1}(\rel^{(v)}_{s_2} q) = \rel^{(v)}_{s_1+s_2 } (q)
$$
(where defined). 
On the other hand we caution the reader that the rel
plane field need not integrate as a group action, i.e. it is easy to
find examples for which 
$$
\rel^{(h)}_{r} \left(\rel^{(v)}_{s} q \right) \neq \rel^{(v)}_{s}
\left(\rel^{(h)}_r q \right).
$$
We let $G$ act on the stratum $\HH$ in the
usual way and also let $G$ act on $\R^2$ by its standard linear
action. The action of $G$ is equivariant for the map $\hol$ used to
define the translation structure on rel leaves, which
leads to the following result (see \cite{BSW}
for more details):
\begin{prop}\name{prop: rel and G commutation}
Let $x$ be a surface with two singular points and let $u \in
\mathfrak{R} \cong \R^2$. 
If
  $\rel^u(x)$ is defined and $g\in G$ then $\rel^{gu}(gx)$ is
  defined 
and $g(\rel^u(x))=\rel^{gu}(gx)$. In particular, if $q$ has no
horizontal saddle connections joining distinct singularities,  then for all $r,s,t \in \R$, 
\eq{eq: rel commutation}{
g_t \rel^{(h)}_r q = \rel^{(h)}_{e^tr} g_t q. 
}
\end{prop}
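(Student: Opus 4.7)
The plan is to work in the marked stratum $\HH_{\mathrm{m}}$, which is permitted because $\pi$ is $G$-equivariant and intertwines rel on the two spaces. Fix a marked lift $\bq$ of $x$ and let $\psi(t) = \psi_{\bq, u}(t)$ be the maximal rel curve through $\bq$ with velocity $u$; by hypothesis $[0,1] \subset I_{\bq, u}$ and, in a hol chart, $\hol(\psi(t)) = \hol(\bq) + t u$. I then introduce the candidate curve $\til\psi(t) \df g\,\psi(t)$ starting at $g\bq$, and aim to show it realizes the rel trajectory through $g\bq$ with velocity $gu$.

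The verification has two ingredients. First, the $G$-action on $H^1(S, \Sigma; \R^2)$ is through the $\R^2$ coefficients, while $\Res$ acts on the homological side, so the two operations commute and $g$ preserves the subspace $\mathfrak{R}$; in particular $gu \in \mathfrak{R}$. Second, using the equivariance \equ{eq: G action} I compute
$$
\hol(\til\psi(t)) = g \cdot \hol(\psi(t)) = g \cdot \hol(\bq) + t(gu) = \hol(g\bq) + t(gu),
$$
which exhibits $\til\psi$ as an integral curve of the rel vector field determined by $gu$ through $g\bq$. By uniqueness of integral curves (equivalently, since hol is a local chart on $\HH_{\mathrm{m}}$), $\til\psi = \psi_{g\bq, gu}$, and evaluating at $t=1$ yields $g\,\rel^u\bq = \rel^{gu}(g\bq)$. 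Projecting to $\HH$ gives the first assertion.

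It remains to check that $\rel^{gu}(g\bq)$ is in fact defined, which I verify by appealing to Proposition \ref{thm: real rel main}. For each saddle connection $\delta$ on $g\bq$ — these are in natural bijection with saddle connections on $\bq$ because $g$ acts linearly in charts, sending straight segments to straight segments — we have
$$
\hol(g\bq, \delta) + s \cdot (gu)(\delta) = g\bigl(\hol(\bq, \delta) + s \cdot u(\delta)\bigr),
$$
and since $g \in G$ is invertible, the right-hand side is nonzero for all $s \in [0,1]$ precisely because the bracketed expression is nonzero by the hypothesis that $\rel^u\bq$ is defined. The particular identity \equ{eq: rel commutation} then follows by taking $g = g_t$ and $u = (r, 0)$, observing that $g_t u = (e^t r, 0)$, and invoking Corollary \ref{cor: real rel main} to ensure $\rel^{(h)}_r q$ is defined for all $r \in \R$. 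The only real obstacle is notational: keeping clear the distinction between the action of $g$ on translation surfaces and its action on the $\R^2$-coefficients of $\hol$, together with the observation that these two actions are intertwined by the hol map.
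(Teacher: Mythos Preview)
The paper does not actually give a proof of this proposition; it simply refers the reader to \cite{BSW}. So there is no in-paper argument to compare against, and your proof has to be judged on its own merits.

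Your argument is correct. The key observation---that $G$ acts on the $\R^2$-coefficients of $H^1(S,\Sigma;\R^2)$ while $\Res$ restricts on the homological side, so they commute and $g$ preserves $\mathfrak{R}$---together with the equivariance \equ{eq: G action} is exactly what makes the computation $\hol(g\psi(t)) = \hol(g\bq) + t(gu)$ go through. One small remark: your integral-curve argument already proves that $\rel^{gu}(g\bq)$ is defined. The curve $\til\psi = g\circ\psi$ is a genuine curve in $\HH_{\mathrm{m}}$ (since $g$ is a self-diffeomorphism of $\HH_{\mathrm{m}}$) defined on all of $[0,1]$, and you have shown it is an integral curve of the constant vector field $gu$ through $g\bq$; hence by maximality $[0,1]\subset I_{g\bq,gu}$. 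The subsequent appeal to Proposition~\ref{thm: real rel main} is therefore a second, independent verification rather than a missing step---it is fine to include, but you should be aware it is redundant. The deduction of \equ{eq: rel commutation} from the general statement via $g_t(r,0)=(e^tr,0)$ and Corollary~\ref{cor: real rel main} is straightforward and correct.
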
 


\subsection{The Arnoux-Yoccoz surface and its symmetries}
\name{sect:AY}
Let $\alpha$ be the unique real solution to the polynomial equation
\eq{eq: alpha}{
\alpha+\alpha^2+\alpha^3=1.}
This number $\alpha$ is approximately $0.5437$. Its algebraic
conjugates are complex and lie outside the unit circle. Hence, its
multiplicative inverse, $\alpha^{-1}$, is a Pisot number, i.e., its
algebraic conjugates all lie within the unit circle.  

In \cite{AY}, Arnoux and Yoccoz introduced the genus three translation
surface $x_0 \in \HH(2,2)^{\mathrm{odd}}$ which was
depicted in Figure \ref{fig:AY}. 
The surface is built from a $2 \times 2$ square with
three slits and a corner cut out as shown. Edge lengths are elements 
of the ring $\Z(\alpha)$ where $\alpha$ is as in \eqref{eq:
  alpha}. Our presentation  
is the surface of \cite[pp. 496-498]{Arnoux88} scaled by a factor of
two to remove the presence of fractions from edge lengths. It will be
convenient for us to fix this particular scaling in our computations
and thus in the remainder of the paper we let $\HH$ denote the sublocus of
$\HH(2,2)^{\mathrm{odd}}$ consisting of surfaces whose area is the
same as that of $x_0$. 

Inspecting the figure and using the
fact that $\alpha$ is cubic, one finds that the $\Z$-module
$\hol(H_1(S, \Sigma; \Z), x_0)$ has rank 6. 
Arnoux and Yoccoz described a pseudo-Anosov automorphism
$\varphi$ of $x_0$, whose derivative is 
\eq{eq: def til g}{
\til g=\left(\begin{array}{rr}
\alpha^{-1} & 0 \\
0 & \alpha \end{array}\right).
}
The fact that $x_0$ admits such a pseudo-Anosov is somewhat
challenging to see. We refer the reader to \cite[p. 498]{Arnoux88} for
an explanation, see also Remark \ref{remark: pseudoanosov}. 

The surface $x_0$ has two singularities each of cone angle $6\pi$, which we
distinguish as a black singularity and a white singularity; see Figure \ref{fig:AY}.
The pseudo-Anosov $\varphi:x_0 \to x_0$ preserves these two singularities.

Aside from the pseudo-Anosov $\varphi$, the surface $x_0$ admits a few
other symmetries. The surfaces admits
two fixed-point free isometries whose derivatives are given by
reflections of the plane in the $x$- and 
$y$-axes \cite{Bowman10}. (Technically, these are not affine
automorphisms of $x_0$ under our definition, since 
they swap the singularities.) The composition of these maps gives
an affine automorphism of derivative $-I$, which is the hyperelliptic
involution of $x_0$.

Hubert and Lanneau \cite{HL} showed that $x_0$ admits no
parabolic affine automorphisms. This led
to the natural question if the Veech group of $x_0$ is elementary, i.e., just a finite extension
of $\langle \til g \rangle$.  
This question was resolved negatively by Hubert, Laneau and M\"oller,
who proved that there is another pseudo-Anosov automorphism of $x_0$
which does not commute with $\varphi$ \cite[Theorem 1]{HLM}. We will
not have a use for this extra symmetry. 

\ignore{
Let $L \subset \HH$ denote the rel leaf through
$x_0$. Recall that $L$ has the structure of a translation surface, where charts to the plane
are given by considering the change in holonomy of a curve joining  
the black singularity to the white singularity as we move through $L$. 
Affine automorphisms of the unmarked translation surface $x_0$ induced
affine automorphisms of the leaf $L$. If $g \in G$ stabilizes
$x_0$, then as a consequence of Proposition \ref{prop: rel and G
  commutation}, we see that $g$ acts on $L$ with derivative $g$.  
}

\subsection{Results of Hubert-Lanneau-M\"oller}In this subsection we
summarize the results of \cite{HLM}. 

A generic surface in $ \HH(2,2)^{\mathrm{odd}}$ does not have a
hyperelliptic involution, but some surfaces do. Let $\LL
\subset \HH$ denote the subset of surfaces with a hyperelliptic
involution. We will need the following:
\begin{prop}\name{prop: HLM}
The subset $\LL$ is of (real) 
codimension 2 in $\HH$ and is transverse to rel leaves. In particular 
$$
\{\rel^v(z): z \in \LL, \, v\in \mathfrak{R}, \, \rel^v(z) \text{ is
  defined} \}
$$
contains an open subset of $\HH$. 
\end{prop}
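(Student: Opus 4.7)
The plan is to use two structural facts about $\LL$: first, that $\LL$ has real codimension $2$ in $\HH$, and second, that $\LL$ is locally cut out in period coordinates by $\R$-linear equations, i.e., is an affine invariant submanifold. Both facts follow from \cite{HLM}: the orbit closure $\overline{G x_0}$ is identified there as $\LL$, and by \cite{EMM2} any $G$-orbit closure is an affine invariant submanifold. (An alternative direct argument for the codimension realizes $\LL$ as the sublocus of $\HH(2,2)^{\mathrm{odd}}$ where the underlying Riemann surface is hyperelliptic and the two zeros are Weierstrass points, giving complex codimension one, i.e., real codimension two.) Granted these, transversality of $\LL$ to rel leaves will follow from the hyperelliptic involution together with the rel-$G$ commutation of Proposition \ref{prop: rel and G commutation}, and openness of the image in the conclusion will be a dimension count.

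Fix $q \in \LL$ and let $\tau$ be its hyperelliptic involution; since $\tau$ has derivative $-I$, the element $-I \in G$ stabilizes $q$, i.e., $(-I) \cdot q = q$ in $\HH$. Assume for contradiction that $u \in \mathfrak{R}$ is nonzero and small enough that $\rel^u q$ is defined, and that $\rel^u q \in \LL$. Then $-I$ also stabilizes $\rel^u q$, so $(-I) \cdot \rel^u q = \rel^u q$. On the other hand, Proposition \ref{prop: rel and G commutation} yields
\[
(-I) \cdot \rel^u q \;=\; \rel^{(-I) u}\big((-I) \cdot q\big) \;=\; \rel^{-u} q.
\]
Comparing, $\rel^u q = \rel^{-u} q$, and by additivity of rel trajectories, $\rel^{2u} q = q$. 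Since $v \mapsto \rel^v q$ is, near $v = 0$, a local diffeomorphism modeled on the translation structure of the rel leaf, this forces $2u = 0$, contradicting $u \neq 0$. Hence $\rel^u q \notin \LL$ for every small nonzero $u \in \mathfrak{R}$.

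The affine invariance now upgrades this to tangent-space transversality: if $\mathfrak{R} \subset T_q \LL$ (as subspaces of $T_q \HH \cong V$), then since $\LL$ is locally defined by $\R$-linear equations on $V$ and rel moves $\hol(q)$ linearly along $\mathfrak{R}$, the entire rel trajectory $\{\rel^v q : v \in B\}$ for $B$ a small ball in $\mathfrak{R}$ would lie in $\LL$, contradicting the previous paragraph. Therefore $T_q \LL \cap \mathfrak{R} = \{0\}$, and since $\dim_\R T_q \LL + \dim_\R \mathfrak{R} = 11 + 2 = 13 = \dim_\R T_q \HH$, transversality holds. Consequently the smooth map
\[
\Phi : \LL \times B \longrightarrow \HH, \qquad (z, v) \mapsto \rel^v z,
\]
with $B \subset \mathfrak{R}$ a sufficiently small neighborhood of $0$, has bijective differential at every $(q, 0)$ with $q \in \LL$ and is therefore a local diffeomorphism. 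Its image is open in $\HH$ and is contained in the set described in the proposition, completing the proof.
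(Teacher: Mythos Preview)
Your proof is correct and follows essentially the same strategy as the paper's: both use that the hyperelliptic involution has derivative $-I$ and fixes the two labeled singularities (a fact the paper attributes to \cite{HLM}, and which you should also cite, since it is needed for the step ``$-I$ also stabilizes $\rel^u q$''), together with the linearity of $\LL$ in period coordinates, to conclude that the rel leaf through $q$ meets $\LL$ only at $q$ locally. The paper packages this as ``the hyperelliptic involution acts on the rel leaf as an affine map with derivative $-\mathrm{Id}$, and $\LL\cap R$ is its fixed-point set, hence discrete,'' whereas you derive $\rel^u q=\rel^{-u}q$ via Proposition~\ref{prop: rel and G commutation} applied to $-I\in G$; these are the same argument in slightly different language.
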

\begin{proof}
The fact that $\LL$ is of real codimension 2 follows from the explicit
dimension computations in \cite{HLM}. To see transversality, both
$\mathcal{L}$ and any rel leaf $R$ intersecting $\LL$ are linear in
period coordinates, so it suffices to check that they intersect in a
discrete set of points. The 
hyperelliptic involution fixes the two singular points (see \cite{HLM}) and hence its action
fixes each rel leaf $R$ passing through $\mathcal{L}$, acting on $R$
by an affine automorphism $A$, such that a point on $R$ belongs to
$\mathcal{L}$ precisely when it is a fixed point for $A$. Since
$D(A) = -\mathrm{Id}$ the fixed points for $A$ on $R$ are
isolated. 
\end{proof}

The Arnoux-Yoccoz surface $x_0$ is contained
in $\LL$, and we have: 

\begin{thm}[{\cite[Theorem 1.3]{HLM}}]\name{thm: HLM} 
$\overline{Gx_0} = \LL$. 
\end{thm}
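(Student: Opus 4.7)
My plan is to prove the two inclusions separately, with the reverse inclusion being the substantial content.

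The inclusion $\overline{Gx_0} \subseteq \LL$ is immediate. The hyperelliptic involution on $x_0$ is an affine automorphism with derivative $-\mathrm{Id}$, which commutes with every element of $G$, so the property of admitting such an involution fixing both singularities is preserved under the $G$-action. Since $\LL$ is cut out by linear conditions in period coordinates (as in the proof of Proposition~\ref{prop: HLM}), it is closed in $\HH$, and we conclude $\overline{Gx_0} \subseteq \LL$.

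For the reverse inclusion, set $M := \overline{Gx_0}$ and invoke the Eskin-Mirzakhani-Mohammadi theorem \cite{EMM2} to conclude that $M$ is an affine invariant submanifold of $\HH$, hence locally cut out by linear equations in period coordinates. Combined with the first inclusion, it suffices to prove $\dim_\R M = \dim_\R \LL = 11$ at $x_0$ and then globalize via connectedness of $\LL$. The $G$-orbit contributes three dimensions to $T_{x_0}M$, and the Arnoux-Yoccoz pseudo-Anosov $\varphi$ with derivative $\til g$ from \eqref{eq: def til g} forces $\til g \cdot M = M$. To obtain the remaining eight tangent directions, one would exploit the rich family of completely periodic cylinder decompositions on $x_0$: applying Wright's cylinder deformation theorem to each such decomposition yields a tangent vector of $M$ along every cylinder-preserving direction, and iterating via $\til g$ (which permutes the periodic directions) one obtains further deformations. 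The key additional input is a second affine pseudo-Anosov $\psi$ of $x_0$ whose derivative does not commute with $\til g$; this would be constructed directly, using the combinatorics of the cubic ring $\Z[\alpha]$ and the many periodic decompositions, ensuring that $\langle \til g, D\psi\rangle$ is Zariski-dense in $\SL_2(\R)$. Together these constraints rule out the possibility that $M$ is a rank-one or otherwise proper affine invariant submanifold sitting strictly between a closed $G$-orbit and $\LL$.

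The main obstacle is the construction and effective use of the second pseudo-Anosov $\psi$, which underpins the entire argument: one must explicitly exhibit $\psi$, verify that $D\psi$ does not commute with $\til g$ (thereby ruling out the closed-orbit alternative, which is in any case already excluded since by \cite{HL} the Veech group of $x_0$ contains no parabolics and hence $x_0$ is not a Veech surface), and then combine $\psi$ with the cylinder deformation theorem to account for every tangent direction in the $11$-dimensional space $T_{x_0}\LL$. Once the local equality $M = \LL$ is established near $x_0$, the global equality follows from the connectedness of $\LL$ and the fact that $M$, being closed, $G$-invariant, and of full local dimension at some point, must coincide with $\LL$ throughout.
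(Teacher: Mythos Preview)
This theorem is not proved in the paper under review; it is quoted as \cite[Theorem~1.3]{HLM} and used as a black box. So there is no proof here to compare your proposal against.

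That said, your proposal is a plan rather than a proof, and the plan has a genuine gap at its heart. You correctly dispose of the inclusion $\overline{Gx_0}\subset\LL$ and correctly invoke \cite{EMM2} to reduce the reverse inclusion to a dimension count $\dim_\R M = 11$. But the dimension count is never carried out. You assert that Wright's cylinder deformation theorem together with a second pseudo-Anosov $\psi$ ``would'' yield the remaining eight tangent directions, yet you neither construct $\psi$ (you only say it ``would be constructed''), nor explain how, even granting $\psi$, one extracts eight independent tangent vectors. Having two non-commuting hyperbolic elements in the Veech group rules out the closed-orbit case, but it does not by itself bound $\dim M$ from below beyond $3$; one still has to exhibit specific cylinder decompositions on $x_0$, apply the deformation theorem, and verify linear independence of the resulting vectors in $H^1(S,\Sigma;\R^2)$. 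That is where all the work lies, and it is absent from your sketch.

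It is also worth noting an anachronism: \cite{HLM} appeared in 2009, before \cite{EMM2} and before Wright's cylinder deformation theorem. The original argument therefore proceeds by entirely different means (direct analysis of the hyperelliptic locus and of degenerations of the Arnoux--Yoccoz disc), so even a completed version of your outline would constitute a different, more modern, route to the same result.
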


\subsection{Results of Eskin-Mirzakhani-Mohammadi and some
  consequences}
Recent breakthrough results of Eskin, Mirzakhani and Mohammadi
\cite{EM, EMM2} give a wealth of information about orbit-closures for
the actions of $G$ and $P$ on strata of translation surfaces. The
following summarizes the results which we will need in this paper: 
\begin{thm}\name{prop: EMM}
Let $x$ be a translation surface in a stratum $\HH$. Then 
$$
\overline{Gx} = \overline{Px} = \MM,
$$
where $\MM$ is an immersed submanifold of $\HH$ of even (real) dimension which is cut out by linear equations
with respect to period coordinates, and $\MM$ is the support of a
finite smooth invariant measure $\mu$. Moreover 
\eq{eq: P action}{
\frac{1}{T} \int_0^T \int_0^1 (g_tu_s)_* \delta_x \, ds \, dt
\to_{T \to \infty} \mu,
}
where the convergence is weak-* convergence in
the space of probability measures on $\HH$. 
\end{thm}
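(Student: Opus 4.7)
The plan is to deduce the orbit closure and equidistribution statements from a prior classification of $P$-invariant ergodic probability measures. This mirrors the strategy familiar from Ratner's theorems in the homogeneous setting, but the Teichm\"uller setting poses serious new difficulties since no global unipotent or algebraic structure is available and the dynamics of $P$ is not given by a homogeneous flow.

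First I would focus on the $P$-action and seek to show that every $P$-invariant ergodic probability measure $\mu$ on $\HH$ is affine, in the sense that its support $\MM$ is cut out by $\R$-linear equations in period coordinates and $\mu$ is proportional to Lebesgue measure on this locus restricted to an area-sublevel set. The main engine is an adaptation of the Benoist-Quint exponential drift method to the Teichm\"uller setting. Fix a generic $x \in \supp \mu$ and a nearby point $y$ in the weak unstable manifold, then flow both by $g_t$. The transverse displacement is controlled by the Kontsevich-Zorich cocycle on the Hodge bundle, for which one needs quantitative estimates of Forni type together with the $G$-equivariance of period coordinates. Along a sequence of times for which $g_t x$ recurs to a good compact set and the transverse behavior is tame on a large Luzin subset, one extracts a non-trivial affine direction along which $\mu$ must be invariant. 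Iterating produces enough extra invariance to force $\mu$ to be affine.

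Granting this measure classification, the orbit closure statement $\overline{Px} = \MM$ follows by a standard argument: any weak-$*$ limit of the averages in \equ{eq: P action} is a $P$-invariant probability measure (so long as no mass escapes to infinity, which is guaranteed by the quantitative non-divergence estimates of Eskin-Masur and Athreya), hence by the classification equals a fixed affine measure $\mu_x$ whose support contains $x$ in its relative interior. This both produces the submanifold $\MM = \supp \mu_x$ and proves the equidistribution \equ{eq: P action}. Since any compact $P$-invariant set supports an invariant probability measure, $\overline{Px}$ must equal $\MM$. For the equality $\overline{Gx} = \overline{Px}$, one uses $G = KP$ with $K = \SO(2)$ and shows that for any $k \in K$ the set $k\MM$ is itself the $P$-orbit closure of $kx$, hence also an affine submanifold containing $x$; uniqueness of the smallest affine submanifold containing $x$ forces $k\MM = \MM$ for all $k$, so $G\MM = \MM$.

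The principal obstacle is the exponential drift step. In the homogeneous setting Benoist-Quint exploit the fact that one-parameter unipotent subgroups centralize the drift direction, a purely algebraic fact that is simply unavailable here. One must instead work with a non-algebraic analogue in which the role of the centralizer is played by the locally affine structure on period coordinates, and combine this with delicate quantitative control on the Lyapunov spectrum of the Kontsevich-Zorich cocycle to ensure that the generated new invariance direction is genuinely affine rather than a diffuse perturbation. Once this technical core is in place, the remaining steps (non-escape of mass, passage from measure classification to orbit closures, and the $K$-invariance argument) are comparatively short.
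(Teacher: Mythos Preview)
The paper does not prove this theorem at all: it is stated as a summary of the results of Eskin--Mirzakhani \cite{EM} and Eskin--Mirzakhani--Mohammadi \cite{EMM2}, and is simply quoted as a black box for later use (specifically, to derive Proposition~\ref{prop: V suffices} and in the proof of Theorem~\ref{thm: rel}). There is nothing in the paper to compare your argument against.

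What you have written is a reasonable high-level sketch of the actual strategy in \cite{EM, EMM2}: classify $P$-invariant ergodic probability measures via an adaptation of the Benoist--Quint exponential drift (with the Kontsevich--Zorich cocycle playing the role that Lie-algebraic structure plays in the homogeneous setting), then pass to orbit closures and equidistribution using non-divergence estimates and the Iwasawa decomposition $G=KP$. As a summary this is broadly accurate, though of course the real content lies in the many hundreds of pages needed to make the drift argument work in the absence of a homogeneous structure, and your description of that core step is (necessarily) only impressionistic. For the purposes of this paper, however, no proof is expected or given; the correct thing to do here is simply to cite \cite{EM, EMM2}.
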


The following consequence will be crucial for us, and is of independent interest. 
\begin{prop}\name{prop: V suffices}
Suppose that $x$ is a translation surface and $\{g_t x : t \in
\R\}$ is a periodic trajectory for the geodesic flow. Then  
$\overline{Ux} = \overline{Vx} = \overline{Gx}$. 
\end{prop}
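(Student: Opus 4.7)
The plan is to show $\MM := \overline{Gx} \subseteq \overline{Ux}$; the reverse inclusion is immediate. The $V$-statement will follow by applying the argument to $wx$, where $w = \left(\begin{smallmatrix}0 & -1 \\ 1 & 0\end{smallmatrix}\right)$ conjugates $U$ to $V$ and normalizes $A$ (sending $g_t$ to $g_{-t}$), so that the $A$-orbit of $wx$ is also periodic.

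Let $t_0 > 0$ be the period, so $g_{t_0}x = x$. The commutation $g_tu_s = u_{e^{2t}s}g_t$ gives the rescaling identity $(g_{t_0})_*\nu_L = \nu_{Le^{2t_0}}$ for the $U$-averages $\nu_L := \frac{1}{L}\int_0^L\delta_{u_vx}\,dv$. First I would unfold the equidistribution \equ{eq: P action}: after substituting $v = e^{2t}s$ and partitioning $[0,T]$ into blocks of length $t_0$ (using $g_{t+nt_0}x = g_tx$), the $P$-average rewrites as a Cesaro average along $n$ of the $A$-smoothed measures $\bar\nu_{L_n} := \frac{1}{t_0}\int_0^{t_0}(g_\tau)_*\nu_{L_n}\,d\tau$, with $L_n = e^{2nt_0}$. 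By Proposition~\ref{prop: EMM} this Cesaro average converges weakly to $\mu$.

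Next extract a weak-$*$ subsequential limit $\nu^*$ of the $\nu_L$; it is a $U$-invariant probability measure supported on $\overline{Ux}\subseteq \MM$. The decisive step is to identify $\nu^* = \mu$. I would invoke the Eskin--Mirzakhani measure classification: every $U$-ergodic probability measure on a stratum is $P$-invariant, hence $G$-invariant and supported on an affine invariant submanifold of $\MM$. Ergodically decomposing $\nu^*$ under $U$, each component is therefore $G$-invariant, and in particular $\nu^*$ is $A$-invariant. Combining this with the rigidity of the countable family of affine invariant submanifolds inside $\MM$, the $G$-ergodicity of $\mu$ (Howe--Moore), and the Cesaro convergence of $\bar\nu_{L_n}$ to $\mu$, one concludes that some subsequential limit $\nu^*$ must equal $\mu$, and hence $\MM = \supp\mu \subseteq \overline{Ux}$.

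The hardest step is identifying $\nu^*$ with $\mu$: Proposition~\ref{prop: EMM} alone does not suffice, and one must appeal to the full Eskin--Mirzakhani measure classification together with Howe--Moore mixing of the geodesic flow. Modulo these deeper inputs, the argument is a routine unfolding of the equidistribution statement exploiting the closed $A$-orbit.
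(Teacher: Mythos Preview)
Your overall architecture is the same as the paper's: unfold the $P$-average from \equ{eq: P action} using the periodicity $g_{t_0}x=x$, extract a subsequential limit $\nu^*$ of the $U$-averages $\nu_L$, and show $\nu^*=\mu$. The gap is in the identification step. You write that by Eskin--Mirzakhani ``every $U$-ergodic probability measure on a stratum is $P$-invariant.'' That is not what \cite{EM} proves: Eskin--Mirzakhani classify $P$-invariant ergodic measures, not $U$-invariant ones. There is no general classification of $U$-invariant measures on strata, and the statement as you phrase it is false --- e.g.\ on a Veech surface the horocycle flow has closed orbits whose Haar measures are $U$-invariant but not $A$-invariant. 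So your route to showing $\nu^*$ is $A$-invariant collapses, and with it the rest of the argument (which in any case is left vague: even granting $A$-invariance of $\nu^*$, you still need to explain how the Cesaro convergence of $\bar\nu_{L_n}$ to $\mu$ forces the subsequential limit $\nu^*$ itself to equal $\mu$).

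The paper's argument avoids any $U$-measure classification. After the same unfolding, it obtains $\mu=\frac{1}{p_0}\int_0^{p_0}(g_p)_*\nu\,dp$ where $\nu$ is any subsequential limit of the $U$-averages (this integral representation also shows there is no escape of mass, a point you assert without proof). Now the key observation is that $\mu$ itself is $U$-ergodic: $\mu$ is $G$-ergodic by \cite{EMM2}, and the Mautner property transfers this to $U$-ergodicity. Since each $(g_p)_*\nu$ is $U$-invariant and their average is the $U$-ergodic measure $\mu$, one gets $(g_p)_*\nu=\mu$ for a.e.\ $p$, and then $\nu=\mu$ by $A$-invariance of $\mu$. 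So the ``hardest step'' needs only Mautner applied to $\mu$, not any classification of the unknown limit $\nu$.
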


\begin{proof}
We prove for $U$, the proof for $V$ being similar. 
Suppose $g_{p_0}x=x, p_0>0$ where $p_0$ is the period for the closed
geodesic. Let $\delta_x$ denote the Dirac measure on $x$. Let 
$\mu$ be the smooth
$G$-invariant measure  for which $\MM = \overline{Gx} = \supp \, \mu$. By \equ{eq:
  P action},  
\eq{eq: lhs}{
\begin{split}
\mu & = 
\lim_{m \to \infty}
\frac{1}{mp_0} \int_0^{mp_0} \int_0^1 (g_tu_s)_* \delta_x \, ds \, dt \\
 & = \lim_{m \to \infty} \frac{1}{mp_0} \int_0^{p_0} 
 \sum_{i=0}^{m-1} \int_0^1 (g_{ip_0+p}u_s)_* \delta_{x} \, ds \, dp. 
\end{split}
}
For each $p \in [0, p_0)$ we write 
\eq{eq: rhs}{
\nu_{p, m} = \frac{1}{m}\sum_{i=0}^{m-1} 
\int_0^1 (g_{ip_0+p} u_s)_* \delta_x \, ds = \frac{1}{m} \sum_{i=0}^{m-1}
\frac{1}{e^{2(ip_0+p)}} \int_0^{e^{2(ip_0+p)}} (u_s g_{p})_*\delta_x
\, 
ds,}
where we have used the commutation relations $g_\tau u_s =
u_{e^{2\tau}s} g_\tau$ and the fact that $g_{ip_0}x =x $ for each $i$.  
Then the right hand-side of \equ{eq: lhs} is $\lim_{m \to \infty} \frac{1}{p_0} \int_0^{p_0}
\nu_{p, m} dp$. Let $\nu_{0,m}$ be the measure corresponding to
$p =0$, then for any $p$, $\nu_{p, m} = g_{p*}\nu_{0,m}$. Take
a subsequence $\{m_j\}$ along which $\nu_{0,m_j}$ converges to a measure
$\nu$ on $\HH$ (where $\nu(\HH) \leq 1$). Then $\nu_{p, m_j} \to_{j \to \infty} g_{p*}
\nu$. The right hand side of \equ{eq: rhs} shows that $\nu$ is
$U$-invariant and therefore so is each $g_{p*} \nu$, and by \equ{eq:
  lhs} we have $\mu = \frac{1}{p_0} \int_0^{p_0} g_{p*} \nu \, dp$
(and in particular $\nu(\HH)=1$). Since $\mu$ is
$G$-ergodic, by the Mautner property (see e.g. \cite{EW}) it is $U$-ergodic. This implies
that $g_{p*}\nu = \mu$ for almost every $p$, and (since $\mu$ is
$\{g_t\}$-invariant), $\nu=\mu$. These considerations are valid for
every convergent subsequence of the sequence $\nu_{0,m}$ and hence
$\nu_{0,m} \to_{m \to \infty} \mu$. Since $\nu_{0,m}$ is obtained by
averaging over the $U$-orbit of $x$,  the orbit $Ux$ is dense in $\supp \, \mu =
\overline{Gx}$, i.e. $\overline{Ux} = \overline{Gx}$.  
\end{proof} 

Combining Theorem \ref{thm: HLM} and Proposition \ref{prop: V suffices} we obtain:
\begin{cor}\name{cor: corollary V}
With the above notations, we have $\overline{V x_0}=\LL$.
\end{cor}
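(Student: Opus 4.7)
The plan is simply to verify the hypothesis of Proposition \ref{prop: V suffices} for the Arnoux-Yoccoz surface $x_0$ and then combine that proposition with Theorem \ref{thm: HLM}. The key observation is that the derivative $\til g$ of the Arnoux-Yoccoz pseudo-Anosov $\varphi$, as given in \equ{eq: def til g}, is the diagonal matrix $\diag(\alpha^{-1}, \alpha)$; since $\alpha \cdot \alpha^{-1} = 1$ this matrix lies in $A = \{g_t : t \in \R\}$. Explicitly, $\til g = g_{t_0}$ where $t_0 = -\log \alpha > 0$ (note $0 < \alpha < 1$).

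Since $\varphi$ is an affine automorphism of the unmarked surface $x_0$ with derivative $\til g$, we have $\til g \cdot x_0 = x_0$ in $\HH$, i.e.\ $g_{t_0} x_0 = x_0$. Hence the $A$-orbit $\{g_t x_0 : t \in \R\}$ is a periodic trajectory of the Teichm\"uller geodesic flow, with period $t_0$. This is exactly the hypothesis needed to invoke Proposition \ref{prop: V suffices} with $x = x_0$.

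Applying that proposition yields $\overline{V x_0} = \overline{G x_0}$. Combining with Theorem \ref{thm: HLM}, which asserts $\overline{G x_0} = \LL$, we conclude $\overline{V x_0} = \LL$, as required. There is no real obstacle here: the entire content has already been packaged into Proposition \ref{prop: V suffices} and Theorem \ref{thm: HLM}, and the only thing to observe is that the Arnoux-Yoccoz pseudo-Anosov provides the periodic geodesic on the nose because its derivative is diagonal in the chosen coordinates.
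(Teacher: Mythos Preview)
Your proof is correct and follows exactly the approach the paper intends: the corollary is stated immediately after ``Combining Theorem \ref{thm: HLM} and Proposition \ref{prop: V suffices} we obtain,'' and you have simply spelled out the one verification needed, namely that $\til g = g_{t_0}$ with $t_0 = -\log\alpha$ makes the geodesic through $x_0$ periodic.
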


\ignore{
\subsection{Interval exchange transformations} \name{subsection: iets}
Here we review the terminology appearing in the formulation of
Corollary \ref{cor: Boshernitzan} and derive it from Theorem \ref{thm:
  real rel}. 

Suppose $\sigma$ is a permutation on $d$ symbols. 
For each 
$$\A \in \R^d_+ = \left\{(a_1, \ldots, a_d) \in \R^d :
\forall i, \, a_i>0 \right \}$$
we have an interval exchange transformation $\IE_{\sigma}(\A)$ defined by
dividing the interval 
$\left[0, \sum a_i \right)$
into subintervals of lengths $a_i$ and permuting them
according to $\sigma.$ 
We say
that $\IE$ is {\em uniquely ergodic} if the only invariant measure for
$\IE$, up to scaling, is Lebesgue measure.  

On a
translation surface we have vertical and horizontal foliations
obtained by pulling back the corresponding foliations of $\R^2$. If $x$ is a
translation surface and $\gamma$ is a parameterized curve everywhere
transverse to the vertical foliation then we can parameterize points
on $\gamma$ by integrating the pullback of the  1-form $dx$. Then the
first return map to $\gamma$ along vertical leaves is an interval
exchange transformation. It is periodic if and only if there is a
vertical cylinder decomposition on $x$, and it is uniquely ergodic if
and only if integration w.r.t. $dx$ is the unique (up to scaling) invariant transverse
measure for the vertical foliation. 

An interval exchange $\IE_\sigma(\A): I \to I$ is said to be {\em self-similar}
if there is a proper subinterval $J \varsubsetneq I$ such that the
first return map $\IE'$ of $\IE$ to $J$ is a rescaling of $\IE$; that
is there is $c \in (0,1)$ such that $|J| = c|I|$ and $\IE' = \IE_\sigma(\mathbf{b})$
where $b_i = ca_i$ for each $i$. It is well-known that self-similar
interval exchange transformations are uniquely ergodic. If
$x$ is fixed by $g_t$ for some $t>0$ and the transversal $\gamma$ is
properly chosen (e.g. as a path joining singularities) then this interval exchange will
be self-similar. 

Fix a straight line segment $\til \ell$ in a stratum $\HH$ (with
respect to the affine structure on $\HH$), fix $q_0 \in \til \ell$ and
fix a path $\gamma$ joining
singularities on $q_0$ which is everywhere transverse to the vertical
foliation on $q_0$. Let $\IE_{\sigma}(\A_0)$ be the corresponding
interval exchange transformation. Then there is a subsegment $\til \ell_0$
containing $q_0$ in its interior, and a segment $\ell_0 \in \R^d_+$, such that for all $q \in \til
\ell_0$, the path $\gamma$ is everywhere transverse to the vertical
foliation on $q$, and the return map to $\gamma$ in $q$ is
$\IE_\sigma(\A)$ for some $\A = \A(q) \in \ell_0$. Moreover $q \mapsto
\A(q)$ is affine. 
With this background it is clear that Corollary \ref{cor:
  Boshernitzan} is an immediate consequence of Theorem \ref{thm: real
  rel}. 
\ignore{
To put Corollary \ref{cor: Boshernitzan} in context,  
let $Q$ be the alternating bilinear form given by 
\eq{eq: defn Q1}{
Q(\E_i, \E_j) = \left\{\begin{matrix}1 && i>j, \, \sigma(i)<\sigma(j) \\ -1
&& i<j, \, \sigma(i) >\sigma(j) \\ 0 && \mathrm{otherwise}
\end{matrix} \right. 
}
where $\E_1, \ldots, \E_d$ is the standard basis of $\R^d$. Then it
was shown in \cite[Thm. 6.1]{MW} that if $\IE_\sigma(\A)$ is uniquely ergodic
and $Q(\A, \B) \neq 0$, then there is $\vre>0$ such that for almost
every $t \in (-\vre, \vre)$ we have that $\IE_\sigma(\A+t\B)$ is
uniquely ergodic. Corollary \ref{cor: Boshernitzan} provides us with
directions $\B \in \R^d$ for which this statement fails (and in
particular $Q(\A, \B)=0$; in fact $\B$ which track the real-rel
direction belong to $\ker Q$). 

\combarak{Delete preceding paragraph if we end up giving a discussion
  in the introduction. }
}
}

\section{Periodic vertical directions}
\name{sec: periods} 
In this section, we investigate real rel deformations of the
Arnoux-Yoccoz surface $x_0$. 
Recall that Theorem \ref{thm: real rel} asserts that these surfaces
admit vertical cylinder decompositions. In \S \ref{sect:results}, we
will state more detailed results about these cylinder
decompositions. In particular, Theorem \ref{thm: real rel} follows
directly from Lemma \ref{lem: cylinder geometry} and Remark \ref{rem:
  hyperelliptic}. 
In \S \ref{sect:trip}, we will travel through the real rel leaf, and
obtain explicit descriptions of $x_r=\rel^{(h)}_r x_0$ for $1 \leq r <
\alpha^{-1}$, which constitutes a full period under the action of
$\til g$ on the real rel leaf. Our trip through the rel leaf results
in formal proofs of
the results stated in \S \ref{sect:results}.

\subsection{Results on real rel deformations of the Arnoux-Yoccoz surface}
\label{sect:results}
Recall that $x_0$ admits a pseudo-Anosov $\varphi$ with 
derivative $\til g$ as in \equ{eq: def til g}. The map $\varphi$ would have to preserves the finite
set of horizontal saddle connection, multiplying their lengths by
$\alpha^{-1}$, and thus $x_0$ has no horizontal saddle connections. 
As a consequence of Corollary \ref{cor: real rel main} we see that 
the horizontal rel deformation of $x_0$, 
$$r \mapsto
x_r=\rel^{(h)}_{r} x_0$$ 
is defined for all $r \in \R$. Moreover this real-rel trajectory is not periodic, i.e. there
is no $r>0$ such that $x_r=x_0$; indeed if $x_r = x_0$ and $r>0$ is
the minimal number with this property, we can apply
\equ{eq: rel commutation} to obtain $x_0 = x_{\alpha r}$, contradicting the
minimality of $r$. 

Let $S$ be a genus three surface with two distinguished points, a
black point and a white point, whose union is the set $\Sigma$. We use
this surface to mark our translation surfaces. We take an arbitrary
homeomorphism $S \to x_0$ respecting the colors of distinguished
points. This also
leads to a marking of horizontal rel deformations of $x_0$ via
homotopy within the bundle $\bigcup_{r \in \R} x_r$ of surfaces over
$\R$. 
In particular, we have identified the topological objects associated with our surfaces.

Each surface $x_r$ determines a natural cohomology class $\hol(x_r)
\in H^1(S,\Sigma; \R^2)$ via \equ{eq: defn hol}. 
Because of our conventions, $\hol(x_r)$ varies continuously in $r$. We
take our horizontal rel flow to move the white distinguished point
rightward relative to the black point. This means that if $\gamma$ 
is the homology class of a curve moving from the black point to the
white point, then the holonomies with respect to the different
structures satisfy 
$$\hol(\gamma, x_r)=\hol(\gamma, x_0)+(r,0).$$
\begin{figure}
  \includegraphics[scale=0.60]{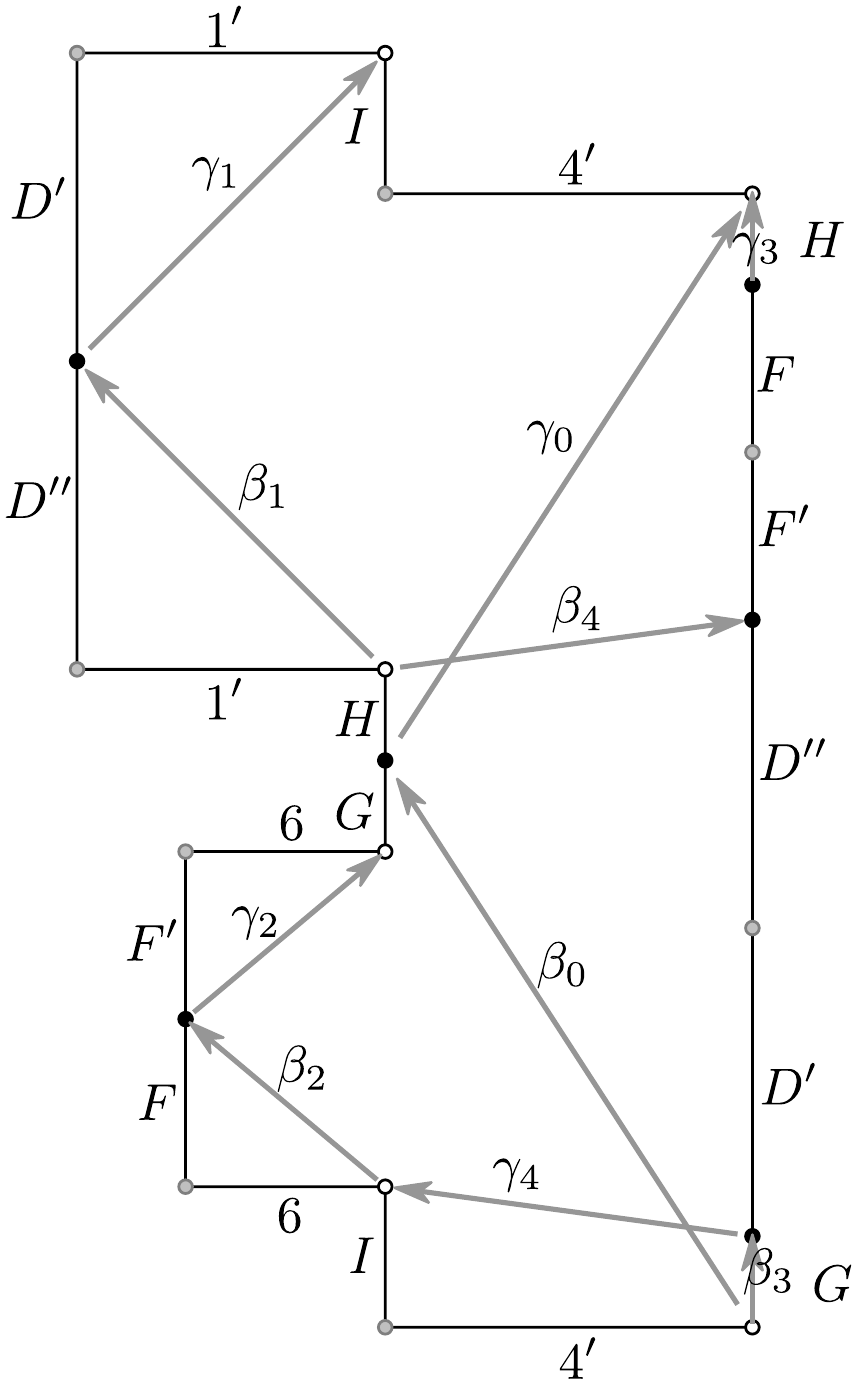}
  \caption{The surface $x_1=\rel^{(h)}_{1} x_0$ with some relative homology classes in $H_1(S,\Sigma; \Z)$.}
  \label{fig:homology}
\end{figure}

We show later in the section that the surface $x_1$ has a presentation
as shown in Figures \ref{fig:homology} and \ref{fig:step3}. Figure
\ref{fig:homology} shows some homology classes
on the surface which will be important to us. Note that the surface
admits a decomposition into three vertical cylinders, and some of the
homology classes are clearly related to these cylinders. These
homology classes belong to a pair of bi-infinite families of homology
classes, 
$\{\beta_k\}, \{\gamma_k\} \subset H_1(S,\Sigma; \Z)$.
Several of these classes are shown in the figure, and we extend inductively according to the 
rules that for all $k \in \Z$, we have:
\begin{equation}
\label{eq:inductive}
\begin{array}{c}
\beta_{k+4}=\gamma_k-\beta_{k+2}-\gamma_{k+2}-2 \gamma_{k+3}, \\
\gamma_{k+4}=\beta_k-\gamma_{k+2}-\beta_{k+2}-2 \beta_{k+3}.
\end{array}
\end{equation}
(The classes shown in Figure \ref{fig:homology} satisfy these identities.)

Note that $H_1(S, \Sigma; \Z)$ is a $\Z$-module isomorphic to $\Z^7$. 
By inspecting the figure and using induction, one can verify the
following:
\begin{prop}
\label{prop:Homological generators}
For each $k \in \Z$, the collection of homology classes
$$\{\beta_k, \gamma_k, \beta_{k+1}, \gamma_{k+1}, \beta_{k+2},
\gamma_{k+2}, \beta_{k+3}, \gamma_{k+3}\}$$ generates $H_1(S, \Sigma;
\Z)$, and are related by the identity 
$$\beta_k+\gamma_k= \beta_{k+1}+ \gamma_{k+1}+ \beta_{k+2}+
\gamma_{k+2}+ \beta_{k+3}+ \gamma_{k+3}.$$ 
\end{prop}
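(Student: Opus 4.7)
The plan is to establish both assertions by induction on $k$, starting from a base case that can be read off from Figure \ref{fig:homology} and then propagating forward and backward in $k$ via the defining recursion \eqref{eq:inductive}.

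First I would fix the base case $k=0$. From Figure \ref{fig:homology} I read off explicit representatives for $\beta_0,\gamma_0,\beta_1,\gamma_1,\beta_2,\gamma_2,\beta_3,\gamma_3$ as relative $1$-cycles on $S$. Since $\dim H_1(S,\Sigma;\R)=2\genus+|\Sigma|-1=7$ and $H_1(S,\Sigma;\Z)$ is free of rank $7$, it suffices to exhibit a $7$-element subset of the displayed $8$ classes whose intersection/incidence pattern with an explicit CW decomposition of the polygonal presentation of $x_1$ spans $\Z^7$. Concretely, I would compute the $8\times 7$ matrix whose rows record each $\beta_j,\gamma_j$ as integer linear combinations of a standard basis of $H_1(S,\Sigma;\Z)$ obtained from the side identifications in Figure \ref{fig:step3}, and check that some $7\times 7$ minor has determinant $\pm 1$. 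The single linear relation among the eight classes is then forced, and a direct (finite) computation verifies the identity
\[
\beta_0+\gamma_0=\beta_1+\gamma_1+\beta_2+\gamma_2+\beta_3+\gamma_3.
\]

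Next I would handle the inductive step, both directions at once. Solving \eqref{eq:inductive} for the lowest-indexed classes gives
\[
\gamma_k=\beta_{k+4}+\beta_{k+2}+\gamma_{k+2}+2\gamma_{k+3},\qquad
\beta_k=\gamma_{k+4}+\gamma_{k+2}+\beta_{k+2}+2\beta_{k+3},
\]
so each $\Z$-module
$M_k\df\Z\langle \beta_k,\gamma_k,\beta_{k+1},\gamma_{k+1},\beta_{k+2},\gamma_{k+2},\beta_{k+3},\gamma_{k+3}\rangle$ is equal to $M_{k+1}$ (one inclusion is tautological; the other uses the two displayed expressions). By induction on $|k|$ starting from $M_0=H_1(S,\Sigma;\Z)$, we get $M_k=H_1(S,\Sigma;\Z)$ for every $k\in\Z$, which is the generation statement.

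Finally, for the relation, let $R_k\df \beta_k+\gamma_k-\sum_{j=1}^{3}(\beta_{k+j}+\gamma_{k+j})$. A direct substitution using \eqref{eq:inductive} shows
\[
\beta_{k+4}+\gamma_{k+4}=\beta_k+\gamma_k-2\bigl(\beta_{k+2}+\gamma_{k+2}+\beta_{k+3}+\gamma_{k+3}\bigr),
\]
and plugging this into the expression for $R_{k+1}$ and using $R_k=0$ immediately yields $R_{k+1}=0$; the same manipulation run in reverse gives $R_{k-1}=0$. Together with the verified base case $R_0=0$, this closes the induction. The main obstacle is the bookkeeping in the base case, namely writing down the correct integer combinations from Figure \ref{fig:homology} and verifying the unimodular-minor condition; once that is done the inductive propagation is a one-line algebraic check.
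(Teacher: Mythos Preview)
Your proposal is correct and matches the paper's approach: the paper itself gives no detailed proof, saying only that the claim follows ``by inspecting the figure and using induction,'' and your write-up is a faithful expansion of exactly that sketch. One small wording quibble: in the step $M_k=M_{k+1}$, neither inclusion is literally tautological; the inclusion $M_{k+1}\subset M_k$ uses the original recursion \eqref{eq:inductive} (expressing $\beta_{k+4},\gamma_{k+4}$ in terms of generators of $M_k$), while $M_k\subset M_{k+1}$ uses your displayed solved form---both are immediate, but it is worth saying so explicitly.
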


In our trip through the rel-leaf, we will use the following result,
whose proof will be given below:
\begin{lem}
\label{lem:holonomies}
The holonomies of the homology classes defined above are:
\eq{eq: holonomies 1}{
\begin{array}{c}
\hol(\beta_k, x_r)=\left(\begin{array}{r}
\alpha^{3-k}-r \\
\alpha^k+\alpha^{k+2}
\end{array}\right), \\
\hol(\gamma_k, x_r)=\left(\begin{array}{r}
r-\alpha^{3-k} \\
\alpha^k+\alpha^{k+2}
\end{array}\right).
\end{array}
}
\end{lem}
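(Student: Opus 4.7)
The plan is to verify the formula \equ{eq: holonomies 1} by induction on $|k|$, using the recursive relations \eqref{eq:inductive}. The induction can be run forward (extending to large positive $k$) by reading \eqref{eq:inductive} as expressing $\beta_{k+4}, \gamma_{k+4}$ in terms of earlier classes, and backward (extending to negative $k$) by solving those same relations for $\gamma_k, \beta_k$ in terms of later classes; so once the formula holds for four consecutive base indices, it holds for all $k \in \Z$.

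For the base cases I would take $k \in \{0,1,2,3\}$, where the classes $\beta_k, \gamma_k$ are drawn on $x_1$ in Figure \ref{fig:homology}. Each such class is represented by a concatenation of (partial) edges whose lengths are listed in Figure \ref{fig:AY}, so its holonomy on $x_0$ can be read off directly. Each base class joins the two singularities. Since $x_r=\rel^{(h)}_r x_0$ shifts the horizontal holonomy of a path from black to white by $+r$ and of a path from white to black by $-r$, and leaves vertical holonomy and absolute periods untouched, the formula $\hol(\beta_k,x_r)=(\alpha^{3-k}-r,\alpha^k+\alpha^{k+2})$ and $\hol(\gamma_k,x_r)=(r-\alpha^{3-k},\alpha^k+\alpha^{k+2})$ can then be checked for $k\in\{0,1,2,3\}$ by direct inspection, recovering in particular that $\beta_k$ runs from white to black while $\gamma_k$ runs from black to white.

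For the inductive step, one substitutes the conjectured holonomy formulas into the right-hand side of \eqref{eq:inductive}. Collecting the $r$-terms in the horizontal component of $\gamma_k-\beta_{k+2}-\gamma_{k+2}-2\gamma_{k+3}$ yields the coefficient $-1$ (so $\beta_{k+4}$ still reads as a white-to-black path), and the remaining $\alpha$-terms collapse to $\alpha^{-k}(2-\alpha^3)$. The vertical component collapses, after cancellation, to $\alpha^k-\alpha^{k+2}-2\alpha^{k+3}-2\alpha^{k+4}-2\alpha^{k+5}$. Matching these against $\alpha^{-1-k}-r$ and $\alpha^{k+4}+\alpha^{k+6}$ respectively requires two identities in the cubic field $\Q(\alpha)$: namely
\[
\alpha^{-1}=2-\alpha^{3},\qquad \alpha^{n}=\alpha^{n+1}+\alpha^{n+2}+\alpha^{n+3}\ (n\in\Z).
\]
Both follow from the defining relation $\alpha+\alpha^2+\alpha^3=1$ (multiplying by $\alpha$ gives $\alpha^4=2\alpha-1$, which rearranges to the first identity, and multiplying by $\alpha^{n}$ gives the second). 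The identical argument applies to $\gamma_{k+4}$, by symmetry between the two recursions in \eqref{eq:inductive}.

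The only non-routine aspect is the bookkeeping in the base case, in particular fixing the signs that record the direction of each base path between singularities; the inductive verification is then a short algebraic manipulation. I expect the only real care to be needed when confirming from Figure \ref{fig:homology} that each depicted $\beta_k$ and $\gamma_k$ is oriented as our sign convention demands, since all subsequent directions are determined automatically by the recursion.
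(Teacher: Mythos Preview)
Your approach is essentially the paper's: verify base cases by reading holonomies off the explicit figure, extend to all $k$ via the recursion \eqref{eq:inductive}, and extend to all $r$ using that $\beta_k$ runs white-to-black and $\gamma_k$ black-to-white under $\rel^{(h)}_r$. Your algebraic verification of the inductive step is correct.

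One slip worth flagging: the base classes are drawn on $x_1$ in Figure~\ref{fig:homology}, but the edge lengths you need are \emph{not} those of Figure~\ref{fig:AY} (which depicts $x_0$). The explicit edge lengths for the presentation of $x_1$ appear only in Figure~\ref{fig:step3}, which is obtained after the ``trip through the rel leaf'' in \S\ref{sect:trip}. The paper accordingly defers the proof of Lemma~\ref{lem:holonomies} until after that presentation is established, and reads off the base holonomies at $r=1$ rather than at $r=0$. (It also checks $k\in\{0,\ldots,4\}$, though as you observe four consecutive indices suffice.) Apart from this bookkeeping about which figure furnishes the dimensions, your argument matches the paper's.
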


This immediately gives an explicit relationship between these classes
and the pseudo-Anosov $\varphi:x_0 \to x_0$. 
\begin{cor}
\label{cor:action}
For each $k$, $\varphi_\ast(\beta_k)=\beta_{k+1}$ and $\varphi_\ast(\gamma_k)=\gamma_{k+1}$.
\end{cor}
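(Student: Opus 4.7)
The plan is to use the fact that $\varphi$ is an affine automorphism of $x_0$ with derivative $\til g = \diag(\alpha^{-1}, \alpha)$, which forces the equivariance
$$\hol(\varphi_\ast(\sigma), x_0) = \til g \cdot \hol(\sigma, x_0)$$
for every $\sigma \in H_1(S, \Sigma; \Z)$. Applying this to $\sigma = \beta_k$ and using Lemma \ref{lem:holonomies}, I would compute
$$\til g \cdot \hol(\beta_k, x_0) = \bigl(\alpha^{-1} \cdot \alpha^{3-k},\ \alpha \cdot (\alpha^k + \alpha^{k+2})\bigr) = (\alpha^{2-k}, \alpha^{k+1} + \alpha^{k+3}) = \hol(\beta_{k+1}, x_0),$$
and analogously $\til g \cdot \hol(\gamma_k, x_0) = \hol(\gamma_{k+1}, x_0)$. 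Thus $\varphi_\ast(\beta_k)$ and $\beta_{k+1}$ have identical holonomies on $x_0$, as do $\varphi_\ast(\gamma_k)$ and $\gamma_{k+1}$.

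The main obstacle is that holonomy equality does not immediately force equality in $H_1(S, \Sigma; \Z) \cong \Z^7$, since $\hol(\cdot, x_0)$ has five-dimensional real kernel. To close the gap I would adjoin the boundary functional $c: H_1(S, \Sigma; \Z) \to \Z$ that records the coefficient of $[\mathrm{white}] - [\mathrm{black}]$ in $\partial \sigma$. The $r$-derivatives in Lemma \ref{lem:holonomies} give $c(\beta_k) = -1$ and $c(\gamma_k) = +1$, and since $\varphi$ fixes both singularities, $c \circ \varphi_\ast = c$. A direct calculation in the $\Z$-basis $\{\beta_0, \gamma_0, \beta_1, \gamma_1, \beta_2, \gamma_2, \beta_3\}$ of $H_1(S, \Sigma; \Z)$ (valid by Proposition \ref{prop:Homological generators}) should show that the $\Q$-kernel of $\hol(\cdot, x_0)$ is one-dimensional, generated by $\delta := \beta_0 - \gamma_1 - \gamma_2 - \beta_3$, with $c(\delta) = -2 \neq 0$. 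Consequently $\hol(\cdot, x_0) \oplus c$ is injective on $H_1(S, \Sigma; \Q)$, and the two equivariances above force $\varphi_\ast(\beta_k) = \beta_{k+1}$ and $\varphi_\ast(\gamma_k) = \gamma_{k+1}$ for every $k$.

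A more elementary alternative is to check the claim by induction: verify the base cases for four consecutive indices (say $k = 0, 1, 2, 3$) by direct geometric inspection of the action of $\varphi$ on the curves drawn in Figure \ref{fig:homology}, and then propagate to all $k \in \Z$ by the inductive relations \eqref{eq:inductive}. Concretely, if the claim holds for $j \in \{k, k+1, k+2, k+3\}$, applying $\varphi_\ast$ to $\beta_{k+4} = \gamma_k - \beta_{k+2} - \gamma_{k+2} - 2 \gamma_{k+3}$ yields $\gamma_{k+1} - \beta_{k+3} - \gamma_{k+3} - 2 \gamma_{k+4}$, which equals $\beta_{k+5}$ by the same relation at level $k+1$; the analogous computation handles $\gamma_{k+4}$, and solving \eqref{eq:inductive} for the lower-index classes propagates the identity to negative $k$. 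In this second approach the main difficulty is the rigorous base-case verification from the topological picture, whereas in the first approach the technical burden is the kernel computation for $\hol(\cdot, x_0)$.
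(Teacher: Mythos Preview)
Your first approach is correct and is essentially the paper's argument. The paper streamlines the injectivity step by invoking the earlier observation that $\hol\big(H_1(S,\Sigma;\Z),x_0\big)$ has $\Z$-rank $6$, so $\hol$ is already injective on the rank-$6$ absolute homology $H_1(S;\Z)$; since $\varphi$ fixes both singularities, $\varphi_\ast(\beta_k)$ and $\beta_{k+1}$ are both white-to-black classes, their difference is absolute, and holonomy equality forces it to vanish---so the explicit kernel generator $\delta$ and the check $c(\delta)\neq 0$ are not needed.
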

\begin{proof}
Observe that an absolute homology class in $H_1(S;\Z)$ is determined
by its holonomy on the surface $x_0$ since both $H_1(S;\Z)$ 
and $\hol(H_1(S ; \Z), x_0)$  are $\Z$-modules of rank $6$. Also recall
that the action of $\varphi$ on $x_0$ 
preserves the two singularities. Fix $k$ and consider the possible
images of $\beta_k$. Observe that each $\beta_k$ is representable as a
path from the white singularity to the black singularity, so its image
is as well. The difference of any two such homology classes is an
absolute class. So, homology classes representable by paths from the
white singularity to the black singularity are also determined by
their holonomy on $x_0$. So, it suffices to observe that  
$$\til g \cdot \hol(\beta_k, x_0)=\hol(\beta_{k+1}, x_0).$$
Similar considerations hold for the classes $\gamma_k$.
\end{proof}

The following lemma describes all vertical cylinders in the surfaces $x_r$ for $r>0$.
\begin{lem}
\label{lem: cylinder geometry}
Let $r>0$ and let $k \in \Z$ so that $\alpha^{-k} \leq r <
\alpha^{-(k+1)}$. If $r=\alpha^{-k}$, then $x_r$ admits a
decomposition into three vertical cylinders 
$C_k$, $C_{k+1}$, and $C_{k+2}$ whose core curves represent
the homology classes $\beta_k+\gamma_k$, $\beta_{k+1}+\gamma_{k+1}$
and $\beta_{k+2}+\gamma_{k+2}$, respectively. If $\alpha^{-k} < r <
\alpha^{-(k+1)}$, then $x_r$ admits a decomposition into four vertical 
cylinders $C_k$, \ldots, $C_{k+3}$ whose core curves represent the
homology classes $\beta_k+\gamma_k$, \ldots,
$\beta_{k+3}+\gamma_{k+3}$, respectively.

For each $j \in \Z$, and each $r$ satisfying $\alpha^{-(j-3)}<r<
\alpha^{-(j+1)}$,  the homology
class $\beta_j+\gamma_j$ is represented by a  cylinder $C_j$ in $x_r$. The circumference of this
cylinder is $2 \alpha^j+2 \alpha^{j+2}$, and its variable width is
given by the equation  
$$\mathrm{Width}(C_j, x_r)=\begin{cases}
r-\alpha^{-(j-3)} & \text{for $\alpha^{-(j-3)}<r \leq \alpha^{-j}$}, \\
\alpha^{-(j+1)}-r & \text{for $\alpha^{-j} \leq r < \alpha^{-(j+1)}$}. \\
\end{cases}$$
\end{lem}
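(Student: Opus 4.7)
The plan is to reduce to a fundamental range for $r$ under the action of the pseudo-Anosov $\varphi$, verify the claim on that range by an explicit polygonal inspection, and then propagate to all $r>0$ by equivariance.

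First, applying \equ{eq: rel commutation} with $t = -\log \alpha$ (so that $g_t = \til g$), and using $\varphi(x_0) = x_0$, one obtains
$$\til g \cdot x_r = \til g \, \rel^{(h)}_r x_0 = \rel^{(h)}_{\alpha^{-1} r} \til g \, x_0 = x_{\alpha^{-1} r}.$$
Thus $\varphi$ carries the vertical structure on $x_r$ affinely to the vertical structure on $x_{\alpha^{-1} r}$, multiplying vertical circumferences by $\alpha$ and horizontal widths by $\alpha^{-1}$. Combined with Corollary \ref{cor:action}, which gives $\varphi_\ast(\beta_k + \gamma_k) = \beta_{k+1} + \gamma_{k+1}$, this reduces the lemma to the single fundamental interval $r \in [1, \alpha^{-1})$.

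Second, I would verify the claim on this fundamental interval by direct inspection of the polygonal presentation of $x_1$ shown in Figures \ref{fig:homology} and \ref{fig:step3}. Formula \equ{eq: holonomies 1} already gives
$$\hol(\beta_j + \gamma_j, x_r) = \left(\begin{matrix} 0 \\ 2 \alpha^j + 2 \alpha^{j+2} \end{matrix}\right)$$
for every $r$ and every $j$, so each candidate core curve is vertical with exactly the predicted circumference, independent of $r$. I would then show that $x_1$ itself decomposes into three vertical cylinders $C_0, C_1, C_2$ with these core curves, and track how the picture evolves as $r$ increases through $(1, \alpha^{-1})$: horizontally sliding the white singularity immediately splits off a fourth vertical cylinder $C_3$ (representing $\beta_3 + \gamma_3$) while compressing $C_0$, and at $r = \alpha^{-1}$ the cylinder $C_0$ has degenerated and the configuration matches, via $\varphi$, the starting configuration with indices shifted by one.

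Third, I would compute the widths. Since $k=2$, an element of $\mathfrak{R}$ is identified with $\R^2$ by integration along a path from the black to the white singularity, so the horizontal coordinates of all vertices of the polygonal presentation of $x_r$ are affine in $r$. The left and right boundaries of $C_j$ are built from saddle connections whose horizontal holonomies are integer combinations of $\hol(\beta_i, x_r)$ and $\hol(\gamma_i, x_r)$ from \equ{eq: holonomies 1}, and the two resulting affine expressions in $r$ work out to be precisely $r - \alpha^{-(j-3)}$ and $\alpha^{-(j+1)} - r$; the width is the minimum of the two, giving the stated piecewise formula, and $r = \alpha^{-j}$ is the unique point at which the two sides meet. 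The main obstacle I anticipate is the combinatorial bookkeeping in the base case: one must rule out any vertical saddle connection on $x_r$ for $r \in (1, \alpha^{-1})$ beyond those listed as boundaries of $C_0, \ldots, C_3$, and follow how the edge identifications change as $r$ moves across the interval. Once this is settled, equivariance under $\varphi$ and the holonomy formula \equ{eq: holonomies 1} handle the remaining cases routinely.
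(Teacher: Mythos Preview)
Your proposal is correct and follows essentially the same route as the paper: reduce to the fundamental interval $[1,\alpha^{-1})$ via the relation $\til g\, x_r = x_{\alpha^{-1}r}$ and Corollary~\ref{cor:action}, verify the three- and four-cylinder decompositions there by inspecting Figures~\ref{fig:step3} and \ref{fig:step4}, read off circumferences from \equ{eq: holonomies 1}, and then propagate by $\varphi$-equivariance.

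The one genuine difference is your treatment of the widths. The paper simply reads off, for $1<r<\alpha^{-1}$, that the widths of $C_1,C_2,C_3$ are the horizontal holonomies of $\gamma_1,\gamma_2,\gamma_3$ and that the width of $C_0$ is the horizontal holonomy of $\gamma_0-2\gamma_3$; the piecewise formula then drops out after applying $\til g$. Your framing of the width as the minimum of the two affine functions $r-\alpha^{-(j-3)}$ and $\alpha^{-(j+1)}-r$ is a legitimate and slightly more conceptual way to package the same computation, since the cylinder $C_j$ persists across four consecutive fundamental intervals and the two branches correspond to its ``growing'' and ``shrinking'' phases. Either way the verification rests on the same figure inspection you already anticipate as the main bookkeeping step.
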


\begin{remark}
\label{rem: hyperelliptic}
A nearly identical statement holds for $x_r$ with $r<0$. Because $x_0$
admits a hyperelliptic involution preserving the singularities, we
have $-I x_r=x_{-r}$ (see \S \ref{sect:AY}). 
\end{remark}

\subsection{A trip through the real rel leaf}
\label{sect:trip}
The goal of this subsection is to find the rel deformations of the
surface $x_r=\rel^{(h)}_{r} x_0$,
with $1 \leq r < \alpha^{-1}$. 
The surface $x_0$ as presented has the white singularity located only
on the top and bottom edges of our cut square. In this case, we can
view action of $\rel^{(h)}_{r}$ as simply changing the length of edges
of the presentation for values of $r$ close to zero. For positive $r$,
we can view it this way until the edge labeled $7$ collapses when
$r=\alpha^3$. At this point the surface becomes as presented in Figure
\ref{fig:step1}. (Checking that edge lengths change as indicated in
the figure follows from the relation 
$\alpha^{k}=\alpha^{k+1}+\alpha^{k+2}+\alpha^{k+3}$. We omit the
straightforward calculations.) 

\begin{figure}
  \begin{minipage}[c]{0.65\textwidth}
   \vspace{0pt}\raggedright
   \includegraphics[scale=0.60]{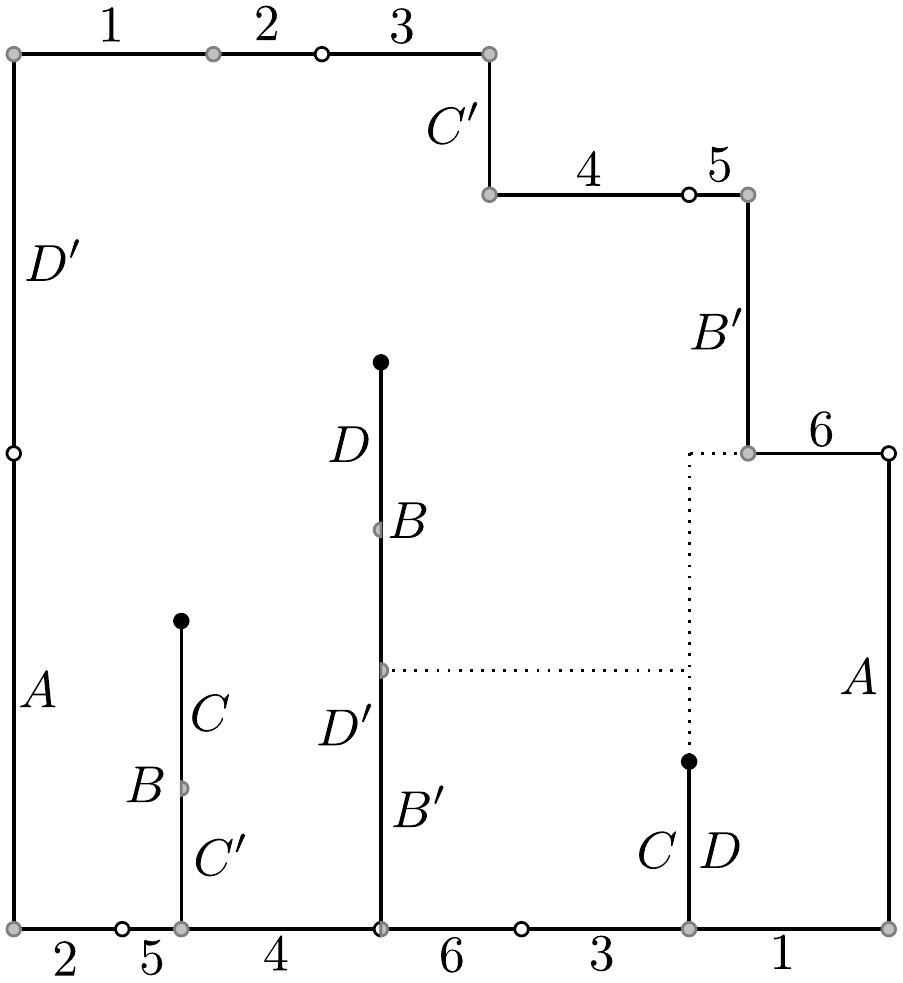}
  \end{minipage}
   \hfill
  \begin{minipage}[c]{0.3\textwidth}
\vspace{0pt}\raggedright
\begin{tabular}{ll}
Label & Edge length \\
\hline
$1$ & $1-\alpha$ \\
$2$ & $\alpha-\alpha^2$ \\
$3$ & $\alpha-\alpha^3$ \\
$4$ & $\alpha^2+\alpha^3$ \\
$5$ & $\alpha^2-\alpha^3$ \\
$6$ & $2 \alpha^3$ \\
$A$ & $2 \alpha$ \\
$B$ &  $\alpha+\alpha^3$ \\
$B'$ & $2 \alpha^2$ \\
$C$, $D$ & $\alpha^2+\alpha^4$ \\
$C'$ & $2 \alpha^3$ \\
$D'$ & $2 \alpha^2+2 \alpha^3$
\end{tabular}
  \end{minipage}
  \caption{The deformed Arnoux-Yoccoz surface  $x_{\alpha^3}=\rel^{(h)}_{\alpha^3} x_0$.}
  \label{fig:step1}
  \end{figure}
  
In order to understand further deformations, we will change the presentation of the surface
$x_{\alpha^3}$. This surface depicted in Figure \ref{fig:step1}
has dotted lines on it. We cut along these dotted lines and reattach
the two rectangles along edges $A$ and $B'$. The new presentation is
shown in Figure \ref{fig:step2}. 
Our changes have the effect of creating some new edges in the figure
(labeled $8$, $E$ and $E'$) and enlarges the edge labeled $6$ into a
new edge we call $6'$. Also, both copies of the edges labeled $1$ and
$2$ are adjacent in the new picture with a regular point between them,
and we define $1'$ to be their union. 

\begin{figure}
  \begin{minipage}[c]{0.60\textwidth}
   \vspace{0pt}\raggedright
   \includegraphics[scale=0.65]{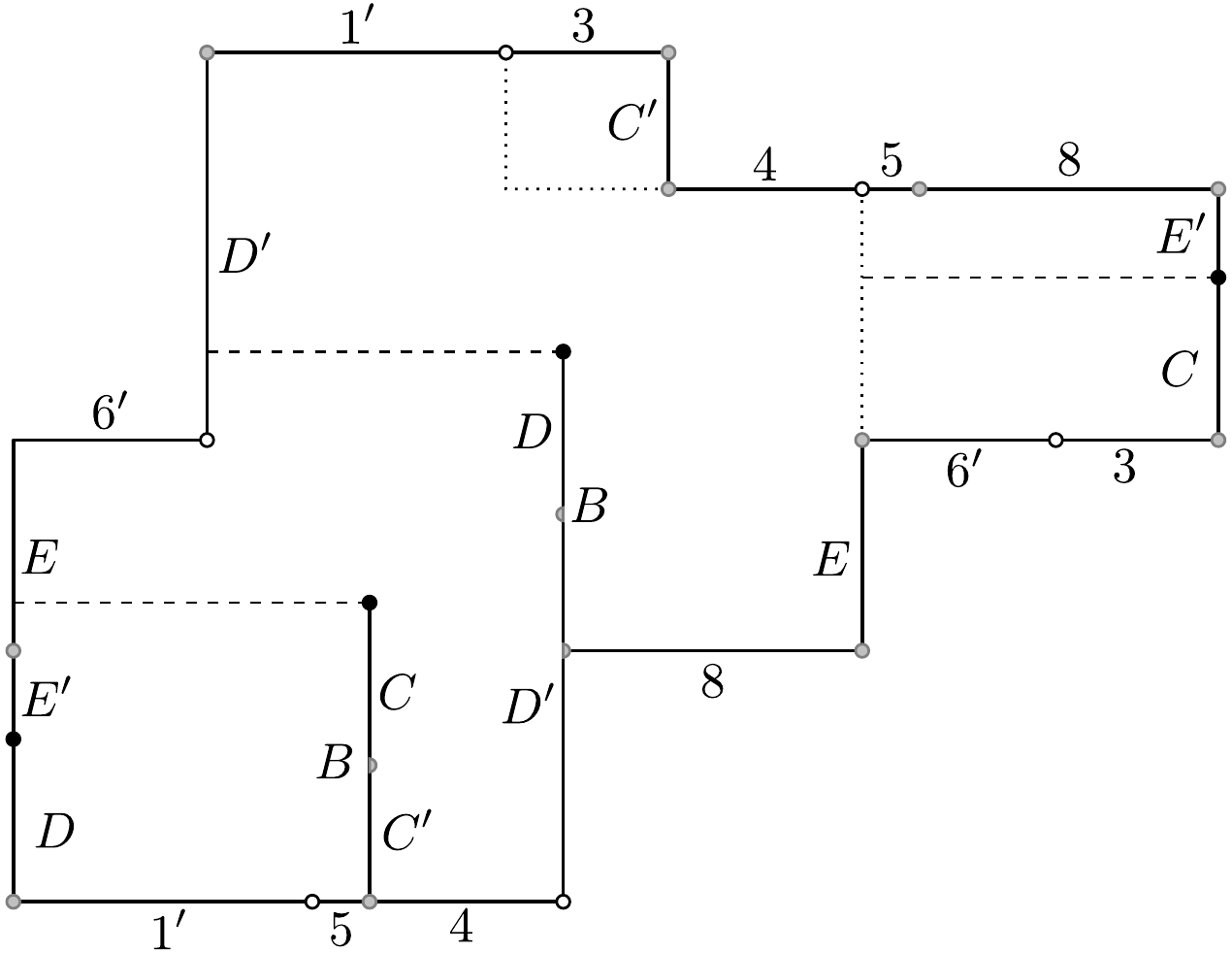}
  \end{minipage}
   \hfill
  \begin{minipage}[c]{0.3\textwidth}
\vspace{0pt}\raggedright
\begin{tabular}{ll}
Label & Edge length \\
\hline
$1'$ & $1-\alpha^2$ \\ 
$3$ & $\alpha-\alpha^3$ \\
$4$ & $\alpha^2+\alpha^3$ \\
$5$ & $\alpha^2-\alpha^3$ \\
$6'$ & $1-\alpha$ \\
$8$ & $\alpha+\alpha^3$ \\
$B$ &  $\alpha+\alpha^3$ \\
$C$, $D$ & $\alpha^2+\alpha^4$ \\
$C'$ & $2 \alpha^3$ \\
$D'$ & $2 \alpha^2+2 \alpha^3$ \\
$E$ & $2 \alpha -2 \alpha^2$ \\
$E'$ & $\alpha^2-\alpha^4$
\end{tabular}
  \end{minipage}
  \caption{The surface $x_{\alpha^3}$, presented differently.}
  \label{fig:step2}
\end{figure}

Figure \ref{fig:step2} presents the surface $x_{\alpha^3}$  in a way which allows us
to understand an additional  rel deformation. We will further
horizontally rel deform by $\alpha+\alpha^2$, which will give us a
total rel deformation by $\alpha+\alpha^2+\alpha^3=1$. We think of
sliding the black singularity left by $\alpha+\alpha^2$ rather than
sliding the white singularity rightward (but this amounts to the same
thing). Note that the black singularity has cone angle $6 \pi$, and so
has three horizontal separatrices leaving leftward. We slit the
surface along these leftward separatrices for length
$\alpha+\alpha^2$. These three segments are shown as dashed lines in
Figure \ref{fig:step2}. We identify edges of the slits so that in the
position where the black singularity was, no singularities
remain. Thus the black point is replaced by three regular points. This
determines the gluings of the slits, and when this is done, the
leftmost points of the three slits are identified to form a new $6
\pi$ cone singularity. This is the image of the black singularity
under the deformation. In order to display the result, we also cut the
surface up a bit more (along the dotted lines) and rearrange the
pieces to show the result. Figure \ref{fig:step3} shows the resulting
surface $x_1$. 
  
\begin{figure}
  \begin{minipage}[c]{0.65\textwidth}
   \vspace{0pt}\raggedright
   \includegraphics[scale=0.60]{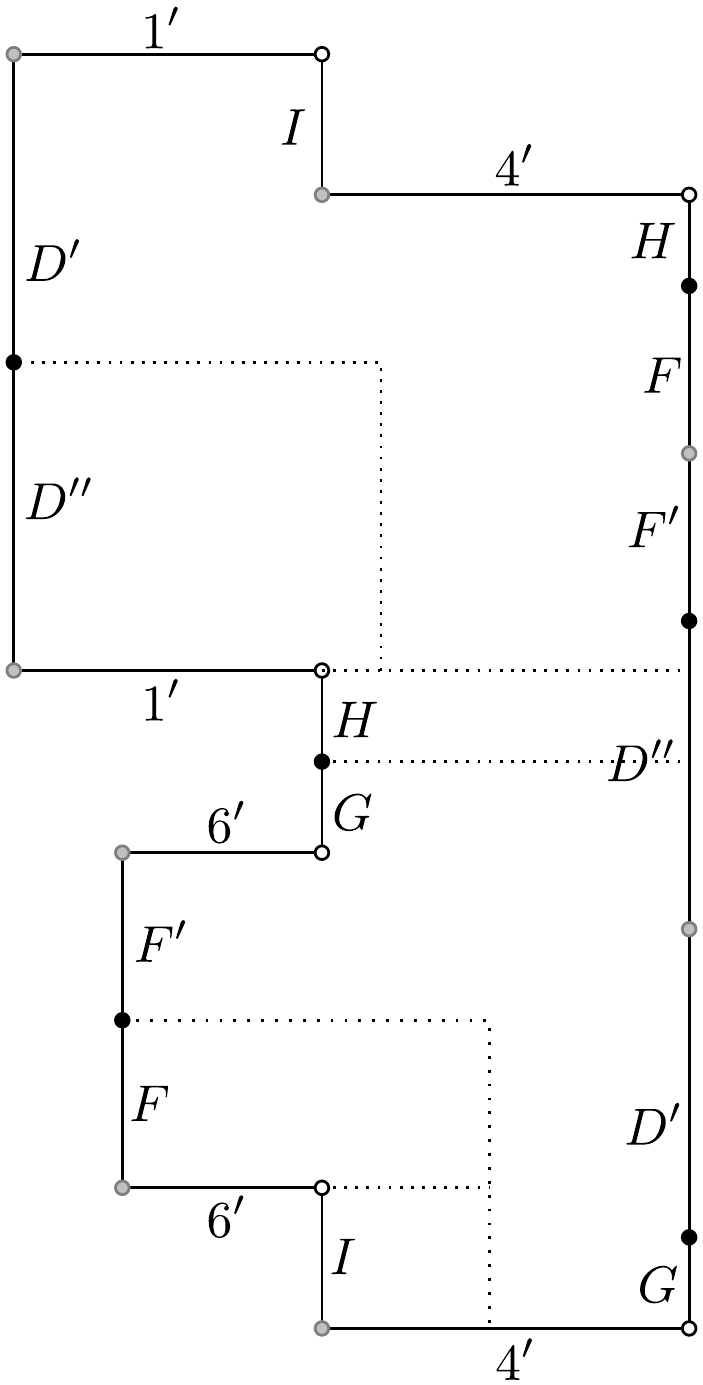}
  \end{minipage}
   \hfill
  \begin{minipage}[c]{0.3\textwidth}
\vspace{0pt}\raggedright
\begin{tabular}{ll}
Label & Edge length \\
\hline
$1'$ & $1-\alpha^2$ \\ 
$4'$ & $\alpha+\alpha^2$ \\
$6'$ & $1-\alpha$ \\
$D'$, $D''$ & $1-\alpha^2$\\
$F$, $F'$ & $\alpha-\alpha^3$ \\
$G$, $H$ & $\alpha^2-\alpha^4$ \\
$I$ & $2 \alpha^3$ \\
\end{tabular}
  \end{minipage}
  \caption{The surface $x_1$ admits a decomposition into three vertical cylinders. Dotted lines show the images of pieces from Figure \ref{fig:step2}.}
  \label{fig:step3}
\end{figure}

We observe that the surface $x_1$ admits a vertical cylinder
decomposition into three cylinders. This is the beginning of the
intervals worth of surfaces $\{x_r~:~1 \leq r < \alpha^{-1}\}$ which
we will analyze more closely. 

Let us pause our trip to note that we have essentially proved the
Lemma which provides the holonomies of our favorite homology classes. 
\begin{proof}[Proof of Lemma \ref{lem:holonomies}]
Figure \ref{fig:homology} shows the homology classes $\beta_k$ and
$\gamma_k$ for $k \in \{0,\ldots, 4\}$.  
The presentation of $x_1$ shown in Figure \ref{fig:homology} is the
same as the one shown in Figure \ref{fig:step3}. 
The latter figure gives the dimensions of the shape, and we can read
off the holonomies of these classes on the surface $x_1$ 
from the figure. We check that they agree with \equ{eq: holonomies 1} when $r=1$. This works when $k \in \{0,\ldots, 4\}$, 
but we can use the inductive formula (\ref{eq:inductive}) to extend
this and observe it holds for all $k \in \Z$ when $r=1$. 
Furthermore because the paths representing the classes $\beta_k$ move from a white
singularity to a black singularity and the paths representing the
classes  $\gamma_k$ do the
opposite, they satisfy: 
$$\hol(\beta_k, x_r)=\hol(\beta_k, x_1)+(1-r,0),$$
$$\hol(\gamma_k, x_r)=\hol(\gamma_k, x_1) +(r-1,0).$$
This observation allows us to verify \equ{eq: holonomies 1} for all $r$.
\end{proof}

We continue the deformation by sliding white singularities to the right relative to the black singularities. As $r$ increases slightly beyond $1$, the surface immediately develops a new vertical cylinder, see Figure \ref{fig:step4}. This figure describes surfaces
$x_r$ for $1 < r < \alpha^{-1}$. 

\begin{figure}
  \begin{minipage}[c]{3in}
   \vspace{0pt}\raggedright
   \includegraphics[scale=0.60]{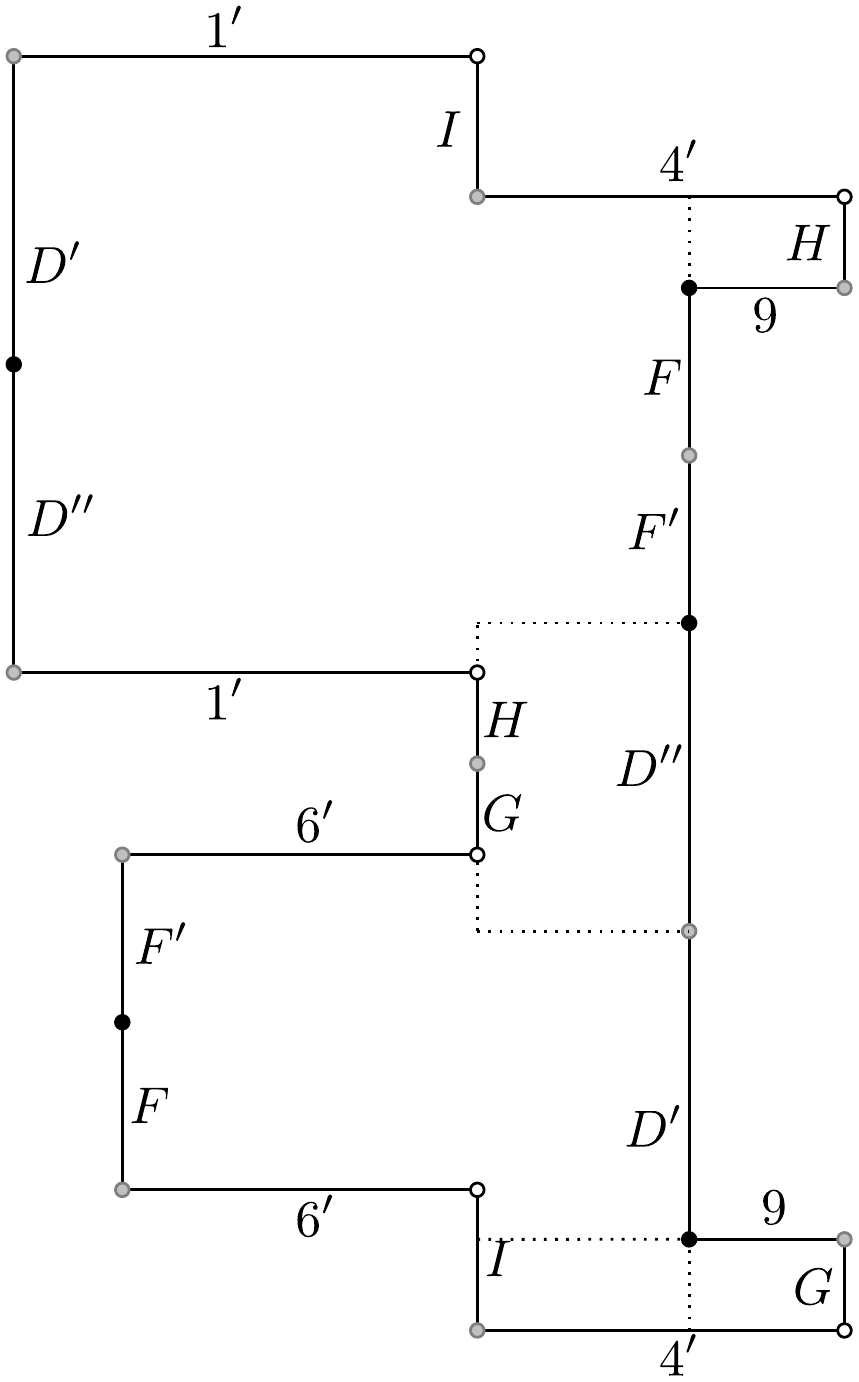}
  \end{minipage}
   \hfill
  \begin{minipage}[c]{0.3\textwidth}
\vspace{0pt}\raggedright
\begin{tabular}{ll}
Label & Edge length \\
\hline
$1'$ & $r-\alpha^2$ \\ 
$4'$ & $\alpha+\alpha^2$ \\
$6'$ & $r-\alpha$ \\
$9$ & $r-1$ \\
$D'$, $D''$ & $1-\alpha^2$\\
$F$, $F'$ & $\alpha-\alpha^3$ \\
$G$, $H$ & $\alpha^2-\alpha^4$ \\
$I$ & $2 \alpha^3$ \\
\end{tabular}
  \end{minipage}
  \caption{The surface $x_r=\rel^{(h)}_{r} x_0$ with $1<r<\alpha^{-1}$   
  admits a decomposition into four vertical cylinders.}
  \label{fig:step4}
\end{figure}

\begin{proof}[Proof of Lemma \ref{lem: cylinder geometry}]
We begin with the first paragraph of the Lemma. Observe that the
statement is true when $r=1$ and when $1<r<\alpha^{-1}$.  
From Figure \ref{fig:homology}, we can see that the upward-oriented core curves of the
vertical cylinders represent the homology classes 
$\beta_0+\gamma_0$, $\beta_1+\gamma_1$ and $\beta_2+\gamma_2$. By
rel-deforming this picture into $x_r$ with $1<r<\alpha^{-1}$, we can
see that when $r$ is in this interval, we get a new cylinder whose core
curve represents $\beta_3+\gamma_3$. Now consider the case of general $r>0$. We
have $\alpha^{-k} \leq r < \alpha^{-(k+1)}$ for some integer $k$ as
stated in the lemma. 
Recall $\til g(x_s)=x_{\alpha^{-1}s}$. Set $s=\alpha^k r$. Then, $1
\leq s < \alpha^{-1}$. Since $\til g^k(x_s)=x_r$, we see that both
$x_s$ and $x_r$ admit a cylinder decompositions into three cylinders
if $r=\alpha^{-k}$ and four cylinders otherwise. Recall that the
action of $\til g$ on homology of these surfaces agrees with the
action of $\varphi$. Using Corollary \ref{cor:action}, we see that the
homology classes of these cylinders are given by  
$$\varphi^k_\ast(\beta_i+\gamma_i)=\beta_{i+k}+\gamma_{i+k}$$
for $i \in \{0,1,2\}$ if $r=\alpha^{-k}$ and for $i \in \{0,1,2,3\}$ otherwise.

As in the lemma, we identify $C_j$ as the cylinder whose core curve
has homology class $\beta_j+\gamma_j$. From the prior paragraph, we
note that this cylinder appears in $x_r$ when $\alpha^{-k} \leq r <
\alpha^{-(k+1)}$ and when $j=i+k$ for $i \in \{0,1,2,3\}$ unless
$r=\alpha^{-k}$ in which case $i=3$ is not allowed. 
Thus, we get a cylinder whose core curve has homology class
$\beta_j+\gamma_j$ when $\alpha^{-(j-3)}<r< \alpha^{-(j+1)}$. The
second paragraph of the Lemma concerns the geometry of these
cylinders. Their circumferences can be determined from the holonomies
of the core curves, which can be computed using Lemma
\ref{lem:holonomies}. 

To compute the widths of the cylinders, observe that for
$1<r<\alpha^{-1}$ the widths of cylinders $C_1$, $C_2$, and $C_3$ are
given by the horizontal holonomies of $\gamma_1$, $\gamma_2$, and
$\gamma_3$, respectively. See Figure \ref{fig:step4}. The width of
$C_0$ on the other other hand is given by the horizontal holonomy of
$\gamma_0-2\gamma_3$. By applying the action of $\til g$ to  these
formulas, and using \equ{eq: rel commutation}, we obtain the widths of all cylinders in
the surface $x_r$ for all $r>0$. 
\end{proof}
\begin{remark}\name{remark: pseudoanosov}
If we were to continue our trip and compute the geometry of
$x_{\alpha^{-1}}$ we would find that $x_{\alpha^{-1}} = \til g
x_1$. This implies that $\til g$ fixes $x_0$, because we would have
$$\til g x_0=\til g \rel^{(h)}_{-1} x_1=\rel^{(h)}_{-\alpha^{-1}} \til g x_1=
\rel^{(h)}_{-\alpha^{-1}} x_{\alpha^{-1}}=x_0,$$
where we use \equ{eq: rel commutation} in the second equality.
This provides an independent proof  that $x_0$ admits a
pseudo-Anosov self-map with derivative $\til g$. 
\end{remark}

\section{Minimal tori for the horocycle flow and vertical
  rel}\name{sec: minimal sets} 
In this section we give basic information about rel leaves of
periodic surfaces. We will work with vertical rel as this is what we
will need in other parts of the paper. 
While the results below are
elementary, they are  of independent interest and we formulate them in
greater generality than we need.

\subsection{Twist coordinates}

Let $\x$ be a marked translation surface with a non-empty labeled singular set $\Sigma$,
and suppose $\x$ is completely periodic in the vertical direction.
Then $\x$ admits a decomposition into vertical cylinders, and
the surface can be recovered by knowing some related combinatorial
data and some geometric parameters. Denote the vertical cylinders by $C_1,
\ldots, C_m$.  
A {\em separatrix diagram } for a vertical cylinder decomposition
consists of the ribbon graph of upward-pointing vertical
saddle connections forming the union of boundaries of cylinders, whose
vertices are given by $\Sigma$ (with orientation at vertices induced
by the translation surface structure on a neighborhood of the singular
point), and an indication of which pairs of
circles in the diagram bound each cylinder $C_i$. For more information
see \cite{KZ}, where this notion was introduced. We add more combinatorial information by selecting 
for each vertical cylinder $C_i$ a rightward-oriented saddle
connection $\sigma_i$ joining the left side of $C_i$ to the right
side. The union of the separatrix diagram and the saddle connections
$\sigma_i$ still has a ribbon graph structure induced by the surface. 
The complete {\em combinatorial data} for our cylinder decomposition
consists of a labeling of cylinders, and the ribbon graph which is the
union of 
the separatrix diagram and the saddle connections $\sigma_i,\ i=1,
\ldots, m$.

Fixing the number and labels of cylinders and the combinatorial data
above, the marked translation surface 
structure on $\x$ is entirely determined by the following {\em parameters}:
the circumferences $c_1, \ldots, c_m \in \R_{>0}$ of the cylinders; 
the lengths $\ell_1, \ldots, \ell_n\in \R_{>0}$
of the vertical saddle connections along the boundaries of the
cylinders; 
and the holonomies 
\eq{eq: if you like it}{
\hol(\sigma_i,\x)=(x_i, y_i) \in \R_{>0} \times \R
}
of each saddle connection $\sigma_i$ for $i=1, \ldots, m$. Observe each $x_i$ records the widths of the
cylinder $C_i$.

We call the numbers $y_i$ the {\em twist parameters}. Observe that we
can vary the twist parameters at will. That is, given $\x$ as above,
there is a map 
\eq{eq:til Phi}{
\til \Phi:\R^m \to \HH_{\mathrm{m}}; \quad (\hat y_1, \ldots, \hat y_m) \mapsto \hat \x,}
where $\hat \x$ is the surface built to have the same parameters as $\x$ except
with twist parameters given by $\hat y_1, \ldots, \hat y_m$,
and where $\HH_{\mathrm{m}}$ is the space of marked translation
surfaces structures modeled on the same surface 
and singularity set as $\x$.
Let $\pi:
\HH_{\mathrm{m}} \to \HH$ be the natural projection. The image of
$\pi \circ \til \Phi$ is the set of all translation surfaces which
have a vertical cylinder decomposition with the same combinatorial
data as $x = \pi(\x)$, and the same parameters describing cylinder
circumferences, lengths of vertical saddle connections, and widths of
cylinders. We refer to this set as the {\em vertical twist space at
  $x$}, and denote it by $\mathcal{VT}_x$. We wish to explicitly parameterize this space. 

Let $y_1, \ldots, y_m$ denote the twist parameters for $\x$, and choose a second set of twist parameters 
$\hat y_1, \ldots, \hat y_m$. Then $\hat \x$ can be obtained from $\x$ by slicing each cylinder $C_i$ 
along a geodesic core curve and regluing so that the right side has moved upward by $\hat y_i-y_i$. 
Thus
for each $\gamma \in H_1(S,\Sigma; \Z)$, 
\begin{equation}
\label{eq:Phi holonomy}
\hol(\gamma,\hat \x)=\hol(\gamma,\x)+\sum_{i=1}^m 
\big(0,
(\hat y_i-y_i) (\gamma \cap C_i)\big)
\end{equation}
where $\cap$ from $\gamma \cap C_i$ denotes the algebraic intersection pairing,
$$\cap:H_1(S,\Sigma; \Z) \times H_1(S \sm \Sigma; \Z) \to \Z,$$ 
taken between
$\gamma$ and a core curve of the cylinder $C_i$.
Writing
$C_i^\ast \in H^1(S,\Sigma; \Z)$ to denote the cohomology class
defined by 
\begin{equation}
\label{eq:cohomology class of cylinder}
C_i^\ast(\gamma)=\gamma \cap C_i,
\end{equation}
we see the following:
\begin{prop}
\label{prop:Phi derivative}
The map $\til \Phi$ is an affine map whose derivative is 
\eq{eq: satisfies}{
D \til \Phi\left(\frac{\partial}{\partial y_i} \right)=(0,C_i^\ast) \in H^1(S,
\Sigma; \R^2), \ \ i=1, \ldots, m.
}
\end{prop}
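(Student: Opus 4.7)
The plan is to read off the map $\til\Phi$ in period coordinates, where $\til\Phi$ becomes manifestly affine, and then identify its derivative via the intersection pairing definition of $C_i^\ast$ in \eqref{eq:cohomology class of cylinder}. Since $\hol:\HH_{\mathrm{m}}\to H^1(S,\Sigma;\R^2)$ is a local chart, it suffices to show that $\hol\circ\til\Phi$ is affine with the claimed derivative.

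First I would verify that $\til\Phi$ is well-defined and continuous as a map into $\HH_{\mathrm{m}}$. Given target twist parameters $\hat y_1,\ldots,\hat y_m$, the surface $\hat\x$ is constructed from $\x$ by cutting each cylinder $C_i$ along a core geodesic and regluing with a vertical shift of $\hat y_i - y_i$. This operation preserves the combinatorial data (separatrix diagram, cylinder labeling, saddle connections $\sigma_i$), the vertical lengths $\ell_j$, the widths $x_i$, and the circumferences $c_i$; only the twist parameters change. Because this cut-and-reglue can be carried out by an isotopy (through partial shifts parameterized by $t\in[0,1]$), the marking extends unambiguously, so $\hat\x$ is well-defined as a point of $\HH_{\mathrm{m}}$.

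Next I would establish the holonomy formula \eqref{eq:Phi holonomy}. Fix a relative cycle $\gamma\in H_1(S,\Sigma;\Z)$ and represent it by a piecewise smooth path in $S$ transverse to the core curves of the cylinders $C_i$. Under the cut-and-reglue producing $\hat\x$ from $\x$, each signed crossing of $\gamma$ with the core curve of $C_i$ contributes an extra vertical displacement of $\hat y_i - y_i$; the horizontal component and the part of the vertical component coming from segments of $\gamma$ outside the cutting curves are unchanged. Summing these contributions over $i$ and using the defining property $\gamma\cap C_i = C_i^\ast(\gamma)$ from \eqref{eq:cohomology class of cylinder} yields exactly \eqref{eq:Phi holonomy}, which rewrites as
\[
\hol(\hat\x) \;=\; \hol(\x) \;+\; \sum_{i=1}^{m} (\hat y_i - y_i)\,(0, C_i^\ast) \;\in\; H^1(S,\Sigma;\R^2).
\]

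Finally, this identity shows that in the period chart, $\til\Phi$ is affine in $(\hat y_1,\ldots,\hat y_m)$, based at $\hol(\x)$, with linear part sending $\partial/\partial y_i$ to $(0,C_i^\ast)$. Pulling back through the local diffeomorphism $\hol$ gives the claim \eqref{eq: satisfies}. The only real subtlety is ensuring that the cut-and-reglue is carried out coherently with the marking and that the intersection pairing appearing in the holonomy count is the same pairing used to define $C_i^\ast$; both are routine but need to be stated carefully to avoid sign ambiguities in the twist direction.
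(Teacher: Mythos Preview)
Your proposal is correct and follows essentially the same route as the paper: the paper derives the holonomy formula \eqref{eq:Phi holonomy} from the cut-and-reglue description and then simply observes (``we see the following'') that this makes $\hol\circ\til\Phi$ affine with linear part $\partial/\partial y_i \mapsto (0,C_i^\ast)$. Your write-up is more explicit about well-definedness of the marking and about the intersection-pairing bookkeeping, but the argument is the same.
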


Recall that $\til \Phi$ defined in (\ref{eq:til Phi}) maps vectors to elements of
$\HH_{\mathrm{m}}$, i.e.  {\em marked} surfaces. Since holonomies of
the saddle connections $\sigma_i$ distinguish surfaces in the image,
$\til \Phi$ is injective. However, the map 
$\pi \circ \til\Phi:\R^m \to \HH$
is certainly not injective.
In the space $\HH$ we consider translation surfaces equivalent if they
differ by the action of an element
of $\Mod(S,\Sigma)$. Let
$\operatorname{M}(\x) \subset \Mod(S,\Sigma)$ denote the subgroup
consisting of equivalence classes of orientation preserving
homeomorphisms of $\x$ such that:
\begin{itemize}
\item Each cylinder $C_i$ is mapped to a cylinder $C_j$ of the same
  circumference and width.
\item Each upward-pointing vertical saddle connection is mapped to an upward-pointing vertical saddle
  connection of the same length, respecting the orientation. 
\end{itemize}
Note that each element
of $\mathrm{M}(\mathbf{x})$ preserves the image of $\til \Phi$, and 
that distinct twist parameters yield the same surface in $\HH$ if and
only if they differ by an element of $\operatorname{M}(\x)$. 
In light of Proposition
\ref{prop:Phi derivative}, $\operatorname{M}(\x)$ pulls back to an
affine action on the twist parameters, and we obtain an affine homeomorphism
\eq{eq:Phi}{\Phi: \R^m / \operatorname{M}(\x) \to \mathcal{VT}_x
  \subset \HH.}

Some elements of the subgroup $\operatorname{M}(\x)$ are clear. Each
Dehn twist $\tau_i \in \Mod(S,\Sigma)$ 
in each vertical cylinder $C_i$ lies in $\operatorname{M}(\x)$. The
action of $\tau_i$ on twist parameters just affects the twist
parameter $y_i$ of $C_i$ and has the effect of adding $c_i$, the
circumference of $C_i$. The {\em multi-twist subgroup} $\operatorname{M}_0(\x) = \langle
\tau_1, \ldots, \tau_m \rangle$ 
of $\operatorname{M}(\x)$ is
isomorphic to $\Z^m$. Moreover, 
$\mathrm{M}(\mathbf{x})$ acts by permutations on the vertical saddle connections, and
$\operatorname{M}_0(\x)$ is the kernel of this permutation action, and
hence is normal and of finite index in
$\operatorname{M}(\x)$. Thus we  have  
the following short exact sequence of groups
\eq{eq:exact}{
\{1\} \to \operatorname{M}_0(\x) \to \operatorname{M}(\x) \to \Delta \to \{1\},}
where $\Delta$ is a subgroup of the group of permutations of the vertical
saddle connections. 
We set $\mathbb{T} = \R^m /\operatorname{M}_0(\x) \cong \prod_{i=1}^m \R / c_i
\Z $ (an $m$-dimensional torus). By normality, the action of $\operatorname{M}(\x)$ on $\R^m$ descends to an action
on ${\mathbb{T}}$ which factors through $\Delta$ via (\ref{eq:exact}). Thus we have the sequence of covers
$$\R^m \to \mathbb{T}  \to  \R^m / \operatorname{M}(\x) \cong
\mathbb{T}/\Delta.$$
We see that $\mathcal{VT}_x$ is isomorphic to the quotient of a torus by
an finite group of linear automorphisms. 
When $\Delta$ is trivial, we actually have $\R^m /
\operatorname{M}(\x) = \mathbb{T}$. Thus the following holds:

\begin{prop}
\name{prop:Phi}
Suppose that the vertical cylinders of $x$ have distinct
circumferences or distinct widths, and that each 
cylinder has a saddle connection on its boundary whose length is distinct from the
lengths of other saddle connections on the boundary. 
Then $\operatorname{M}(\x)= \operatorname{M}_0(\x)$
and therefore $\Phi: \mathbb{T} \to \mathcal{VT}_x$ is an isomorphism of affine manifolds.
\end{prop}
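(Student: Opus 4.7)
The plan is to show $\operatorname{M}(\x)=\operatorname{M}_0(\x)$; once this is established, the exact sequence \eqref{eq:exact} gives $\Delta=\{1\}$, hence $\R^m/\operatorname{M}(\x)=\mathbb{T}$, and the map $\Phi$ of \eqref{eq:Phi} becomes the desired affine isomorphism $\mathbb{T}\to\mathcal{VT}_x$. Pick a representative orientation-preserving homeomorphism $h$ of a class in $\operatorname{M}(\x)$; the goal is to show that $h$ is isotopic rel $\Sigma$ to a product of Dehn twists in the vertical cylinders $C_1,\ldots,C_m$.

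The argument proceeds in three main steps. First, since $h$ permutes the cylinders preserving widths and circumferences, and the hypothesis guarantees that cylinders are distinguished by one of these invariants, one gets $h(C_i)=C_i$ for each $i$. Second, each $C_i$ is a topological annulus with two boundary circles, and an orientation-preserving homeomorphism of an annulus that swaps its boundary circles necessarily reverses the induced orientation on each circle; since $h$ sends upward-pointing vertical saddle connections to upward-pointing ones, $h$ must preserve each side of $C_i$ setwise. Third, the restriction of $h$ to each boundary circle of $C_i$ is a cyclic rotation permuting the incident saddle connections while preserving their cyclic order and lengths. The unique-length saddle connection $\sigma_i\subset\partial C_i$ provided by the hypothesis must be fixed by $h$, which forces the rotation on the particular side of $C_i$ containing $\sigma_i$ to be trivial. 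A combinatorial propagation argument, exploiting the fact that every boundary circle is adjacent to two cylinders (possibly with multiplicity), then shows that the rotation on every other boundary circle is trivial as well. Once $h$ restricts to the identity on the full boundary of every $C_i$, its restriction to each cylinder lies in the mapping class group of the annulus rel boundary, which is the infinite cyclic group generated by the Dehn twist $\tau_i$. Hence $h$ is isotopic rel $\Sigma$ to a product $\prod_i \tau_i^{k_i}$ and therefore belongs to $\operatorname{M}_0(\x)$.

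The main obstacle is the propagation step in the third part. The hypothesis provides a unique-length saddle connection $\sigma_i$ on $\partial C_i$ for each $i$, but this saddle connection lies on only one of the two boundary circles of $C_i$; a priori the rotation on the other boundary circle could be non-trivial. The key observation is that if some boundary circle $\gamma$ had a non-trivial rotation, then for every cylinder incident to $\gamma$ the unique-length saddle connection would have to lie on the opposite side, and tracing the consequences along the cylinder adjacency graph of the connected surface $S$ yields a contradiction. Working through this combinatorial argument carefully is the heart of the proof; all the other steps are routine consequences of the defining properties of $\operatorname{M}(\x)$ and the elementary topology of annuli.
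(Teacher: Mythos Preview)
The paper does not actually provide a proof of this proposition; it is stated immediately after the sentence ``Thus the following holds:'' as an evident consequence of the preceding discussion (namely that $\operatorname{M}_0(\x)$ is the kernel of the permutation action on vertical saddle connections, and hence $\Delta$ is trivial as soon as this permutation action is trivial). So there is no argument in the paper to compare against, and your write-up is considerably more detailed than anything the authors supply.

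Your first two steps are correct and cleanly argued. The third step is also fine: once $h$ fixes each cylinder and each of its boundary circles setwise, the induced map on a boundary circle is an orientation-preserving circle homeomorphism permuting the saddle connections, and a single fixed saddle connection forces that permutation to be the identity.

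The difficulty is exactly where you flag it, in the propagation step, and your sketch does not close the gap. Two issues:
\begin{itemize}
\item Your claim that ``every boundary circle is adjacent to two cylinders'' is not correct as stated. A boundary circle is a boundary component of \emph{one} cylinder; what is true is that each saddle connection on it is also on a boundary circle of an adjacent cylinder. The relevant combinatorial graph has vertices the $2m$ boundary circles and edges the saddle connections, and propagation of ``trivial rotation'' spreads along edges of this graph.
\item The hypothesis gives, for each cylinder $C_i$, a fixed saddle connection on \emph{one} of its two boundary circles, so that circle has trivial rotation. But the other boundary circle of $C_i$ need not be in the same connected component of the graph above. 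Your contradiction sketch (``tracing the consequences along the cylinder adjacency graph \ldots\ yields a contradiction'') invokes connectivity of $S$, but connectivity of $S$ only gives connectivity of the graph in which one also adds an edge between $\partial_L C_i$ and $\partial_R C_i$ for each $i$ --- and propagation does not travel along those edges. You have not explained why the ``non-trivial rotation'' side cannot persist around such a loop.
\end{itemize}
In the paper's only application (to the surfaces $x_r$ in Lemma~\ref{lem:tangent space}) there are just four cylinders with explicitly computed, pairwise distinct saddle-connection lengths, so the permutation action is visibly trivial and no propagation is needed. If you want a self-contained proof in the generality stated, you should either strengthen the hypothesis (e.g.\ require a globally unique-length saddle connection on \emph{each} boundary circle) or supply a genuine argument for why the graph of boundary circles under ``shares a saddle connection'' is connected, or why a non-trivial rotation on some circle is incompatible with the per-cylinder seeding.
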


\begin{remark}
The above discussion equips the quotient $\mathcal{VT}_x
\cong \mathbb{T}/\Delta$ with the structure of an affine orbifold,
since it is the quotient of $\mathbb{T}$ by the action of a finite
group of affine automorphisms $\Delta$. For an example in which
$\Delta$ is nontrivial and $\mathcal{VT}_x $ is not a torus, let $x$
be the {\em Escher  
staircase surface} obtained by cyclically gluing $2m$ squares (see e.g.
\cite[Figure 3]{LW}). The surface has $m$ parallel cylinders, the
torus $\mathbb{T}$ is isomorphic to $(\R/\Z)^m$, and the
group $\Delta$ contains the group of cyclic permutations of the
coordinates, realized by homeomorphisms of the surface which go up and
down the staircase.
\end{remark}

\begin{remark}
The maps \equ{eq:til Phi} and \equ{eq:Phi} were used in
\cite{calanque} in order to analyze the horocycle flow on completely
periodic surfaces, but the case in which $\Delta$ is
nontrivial was overlooked. We take this opportunity to rectify an
inaccuracy: in \cite[Prop. 4, case (2)]{calanque} instead of a torus we
should have the quotient of the torus by a finite group. 
This does not affect the validity of other statements in
\cite{calanque}. 
\end{remark}

\subsection{Vertical rel flow in twist coordinates}
Now we will specialize to the setting where $\x \in \HH_{\mathrm{m}}$ has two singularities,
which we distinguish as a black singularity and a white singularity. Figure \ref{fig:cylinder data}
shows an example.
We continue to  suppose that the vertical direction is completely periodic, but now we
will also assume that $\x$ admits no vertical saddle connections joining distinct singularities.
That is, each boundary edge of each vertical cylinder
contains only one of the two singularities. 
We will order the cylinders so that $C_1, \ldots, 
C_k$ have the white singularity on their left and the black singularity on their right,
so that $C_{k +1}, \ldots, C_\ell$ have the black singularity on their left and the white singularity on their right, and $C_{\ell+1}, \ldots, C_m$ have the same singularity on both boundary components.

We observe that the vertical rel flow applied to a surface in the
vertical twist space can be viewed as only changing the twist
parameters. Concretely, $\rel_r^{(v)}$ decreases the twist parameters
of cylinders $C_1, \ldots, C_k$ by $r$, increases the twist parameters
of $C_{k+1}, \ldots, C_\ell$ by $r$, and does not change the twist
parameters of $C_{\ell+1}, \ldots, C_m$. Therefore we find:

\begin{prop}
\label{prop:conjugacy}
The vertical twist space $\mathcal{VT}_x$ is invariant under $\rel^{(v)}$. Define 
$$\vec{w} \in \R^m, \ \ w_i=\begin{cases} -1 & \text{if $i \leq k$}\\
1 & \text{if $k < i \leq \ell,$}\\
0 & \text{if $i >\ell$}.
\end{cases}$$
Then the straightline flow 
$$F^r_{\vec{w}} : \vec{y} \mapsto \vec{y}+r \vec{w}$$ 
on $\R^m$ induces a well-defined straightline flow on $\R^m /
\operatorname{M}(\x)$  and $\Phi$ is a topological conjugacy from this
flow to the restriction of $\rel_r^{(v)}$ to $\mathcal{VT}_x$; that is
$\Phi \circ F^r_{\vec{w}} = \rel^{(v)}_r \circ \Phi$ for every $r$. 
\end{prop}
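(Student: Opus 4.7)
The plan is to identify the infinitesimal generator of the twist flow $F^r_{\vec w}$ with that of the vertical rel flow at the level of cocycles in $H^1(S, \Sigma; \R^2)$. By Proposition~\ref{prop:Phi derivative}, the generator of $F^r_{\vec w}$ corresponds to the class $\left(0, \sum_{i=1}^m w_i C_i^\ast\right) \in H^1(S, \Sigma; \R^2)$, whereas $\rel^{(v)}_r$ at unit speed is governed by the unique element of $\mathfrak{R}$ that evaluates to $(0,1)$ on a path $\delta$ from $\xi_1$ to $\xi_2$. So it suffices to show that the cocycle $\sum_i w_i C_i^\ast$ (i) restricts to zero on $H_1(S; \Z)$, placing it in $\mathfrak{R}$, and (ii) evaluates to $1$ on such a $\delta$. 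Given these, the conjugacy $\Phi \circ F^r_{\vec w} = \rel^{(v)}_r \circ \Phi$ follows from \eqref{eq:Phi holonomy} and the linearity of both flows in period coordinates.

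Before tackling (i) and (ii), I would verify that $\mathcal{VT}_x$ is preserved by $\rel^{(v)}_r$. The widths $x_i$ are absolute periods, hence fixed by any rel deformation. Moreover, because no vertical saddle connection joins distinct singularities, each boundary circle of each vertical cylinder carries only a single color of singularity; so moving that singularity vertically by $r$ simply rigidly rotates its marked points on the boundary circle by $r$, preserving both the circumference $c_i$ and the individual lengths $\ell_j$ of the boundary saddle connections. Hence only the twist parameters evolve. For part~(ii), whenever $\ell > k$ I would take $\delta = \sigma_i$ for any $k < i \le \ell$: it runs from $\xi_1$ on the left to $\xi_2$ on the right, and crosses only $C_i$, once and positively, so $\sum_j w_j(\sigma_i \cap C_j) = w_i = 1$. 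The symmetric case $\ell = k$ is handled by $\delta = -\sigma_i$ with $i \le k$. Connectedness of $S$ together with the presence of both singularities guarantees at least one such $\sigma_i$ exists.

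The main obstacle is establishing (i). I would represent an arbitrary class $\beta \in H_1(S; \Z)$ by a smooth closed curve $\tilde\beta$ transverse to all cylinder boundaries and core curves, and decompose it cyclically into arcs $a_1, \ldots, a_N$, each contained in a single cylinder $C_{i(j)}$ with both endpoints on vertical saddle connections. The signed crossing $(a_j \cap C_{i(j)})$ equals $+1$ if $a_j$ traverses the cylinder from the left boundary to the right, $-1$ in the opposite direction, and $0$ if both endpoints lie on the same boundary circle. Define $c(p) \in \{+1, -1\}$ to be $+1$ if $p$ lies on a saddle connection at $\xi_2$ (white) and $-1$ if at $\xi_1$ (black). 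A case analysis across the three cylinder types then shows that the contribution of each arc satisfies $w_{i(j)} (a_j \cap C_{i(j)}) = \tfrac{1}{2}\bigl(c(p_{\mathrm{end}(j)}) - c(p_{\mathrm{start}(j)})\bigr)$. Consecutive arcs share an endpoint lying on the same saddle connection, so the colors match and the sum telescopes; since $\tilde\beta$ is a closed loop, the total vanishes, yielding $\sum_i w_i(\beta \cap C_i) = 0$.

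Finally, the flow $F^r_{\vec w}$ descends to the quotient $\R^m/\operatorname{M}(\x)$ because elements of $\operatorname{M}(\x)$ preserve singularity labels and upward saddle-connection orientations, hence permute cylinders preserving their type (which side is white and which is black) and leave $\vec w$ invariant up to the induced permutation of coordinates. Combined with the cocycle identities (i) and (ii), this produces the desired conjugacy.
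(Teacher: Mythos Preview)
Your argument is correct, but the route differs from the paper's. The paper proceeds geometrically: just before the proposition it records, essentially without proof, the observation that applying $\rel^{(v)}_r$ to a surface in the vertical twist space changes only the twist parameters, and does so exactly by $y_i \mapsto y_i + r w_i$. With that in hand, the paper's proof reduces to checking that $F^r_{\vec w}$ descends to $\R^m/\operatorname{M}(\x)$, via the same argument you give in your last paragraph (the permutation action of $\operatorname{M}(\x)$ respects the three cylinder types because singularities are labeled).

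You instead establish the conjugacy by matching infinitesimal generators in $H^1(S,\Sigma;\R^2)$: you show that $(0,\sum_i w_i C_i^\ast)$ lies in $\mathfrak R$ via the telescoping argument for (i), and evaluates to $(0,1)$ on a suitable $\delta$ via (ii). This turns the paper's geometric assertion into a cohomological computation. Your treatment of invariance of $\mathcal{VT}_x$ is likewise cleaner than it might first appear: monochromatic boundary saddle connections have zero boundary in $H_0(\Sigma)$, hence come from $H_1(S)$, so any element of $\mathfrak R$ annihilates them and the $\ell_j$ are genuinely fixed. The connectedness remark guaranteeing that at least one cylinder has differently colored sides (so that a $\delta$ of the required form exists) is a point the paper never needs to address because it never writes the generator as a cocycle. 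The trade-off is that your argument is longer; what it buys is that nothing is left to the reader's spatial intuition about sliding singularities along vertical boundary circles.
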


\begin{proof}
In the case $\operatorname{M}(\x) = \operatorname{M}_0(\x)$, the
vertical twist space $\mathcal{VT}_x$ is isomorphic to the torus $\mathbb{T}$ and it is
straightforward to check that the effect of applying $\rel^{(v)}_r$ on
the twist coordinates is exactly $y_i \mapsto y_i + rw_i$,
giving the required conjugacy. In the general case we need to show
that the action of  $F^r_{\vec{w}}$ and of the group $\Delta$ on
$\mathbb{T}$ commute; indeed this will imply both that the action of
$F^r_{\vec{w}}$ on $\mathbb{T} /\Delta \cong \mathcal{VT}_x$
is well-defined, and that 
$\Phi$ intertwines the straightline flow $F^r_{\vec{w}}$ on
$\R^m/\operatorname{M}(\x)$ with the vertical rel flow $\rel^{(v)}_r$ on
$\mathcal{VT}_x$. 
The definition of the $w_i$ implies that $F^r_{\vec{w}}$ and
$\Delta$ commute provided the permutation action of $\operatorname{M}(\x)$ on the vertical 
cylinders preserves each of the three collections of cylinders $\{C_1, \ldots, C_k\}, \{C_{k+1}, \ldots,
C_\ell\}, \{C_{\ell+1}, \ldots, C_m\}$; this in turn follows from our assumption that
singularities are labeled, and the definition of $\operatorname{M}(\x).$
\end{proof}

Given a real vector space $V$, a {\em $\Q$-structure} on $V$ is a choice of
a $\Q$-linear subspace $V_0$ such that $V = V_0 \otimes_{\Q}
\R$ (i.e. there is a basis of $V_0$ as a vector space over $\Q$, which is a
basis of $V$ as a vector space over $\R$). The elements of $V_0$ are
then called {\em rational} points of 
$V$. If $V_1, V_2$ are vector spaces with $\Q$-structures, then a
linear transformation $T:
V_1 \to V_2$ is said to be {\em defined over $\Q$} if it maps rational
points to rational points. 
There is a natural $\Q$-structure on $H^1(S, \Sigma; \R^2)$, namely
$H^1(S, \Sigma; \Q^2)$. Since the action of $\Mod(S,
\Sigma)$ preserves $H_1(S, \Sigma; \Z)$ this induces a well-defined
$\Q$-structure on $\HH$. Moreover since $\HH$ is an affine manifold locally modeled on
$H^1(S, \Sigma ; \R^2)$, the tangent space to $\HH$ at any $x \in \HH$
inherits a $\Q$-structure. With respect to this $\Q$-structure, we obtain:

\begin{prop}\name{prop: real rel closure}
Retaining the notation above, let 
\eq{eq: retaining}{\mathcal{O}(x) = 
\overline{\{\rel^{(v)}_sx : s \in \R \}}\subset \mathcal{VT}_x.
}
Then ${\mathcal
  O}(x)$ is a $d$-dimensional affine sub-orbifold of $\HH$, where $d$ is the
dimension of the $\Q$-vector space 
\eq{eq: dim}{
\spa_{\Q} \left\{\frac{-1}{c_1}, \cdots, \frac{-1}{c_k},
  \frac{1}{c_{k+1}}, \ldots, \frac{1}{c_\ell} \right\} \subset \R.} 
Moreover, for every $x$, the tangent space to $\mathcal{O}(x)$ is a
$\Q$-subspace of $H^1(S, \Sigma; \R^2)$. 

\end{prop}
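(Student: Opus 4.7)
The plan is to apply Proposition~\ref{prop:conjugacy}, which conjugates the restriction of $\rel^{(v)}$ to $\mathcal{VT}_x$ with the straightline flow $F^r_{\vec{w}}$ on $\R^m/\operatorname{M}(\x)$, and then invoke the classical theory of closures of linear flows on tori. By the short exact sequence (\ref{eq:exact}), the orbifold $\R^m/\operatorname{M}(\x)$ is the quotient of the torus $\mathbb{T} = \prod_{i=1}^m \R/c_i\Z$ by the finite permutation group $\Delta$, and the lift of $F^r_{\vec{w}}$ to $\mathbb{T}$ commutes with $\Delta$ (as in the proof of Proposition~\ref{prop:conjugacy}). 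Consequently the orbit closure in $\mathbb{T}/\Delta$ is the $\Delta$-quotient of the orbit closure in $\mathbb{T}$, and since $\Delta$ is finite, passing to this quotient affects neither the dimension of the orbit closure nor the $\Q$-structure of its tangent space. It therefore suffices to analyze the closure of $\{r\vec{w} : r \in \R\}$ inside $\mathbb{T}$.

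For the dimension, I would apply the Kronecker-Weyl theorem. Rescaling the $i$-th coordinate of $\mathbb{T}$ by $1/c_i$ identifies $\mathbb{T}$ with the standard torus $(\R/\Z)^m$ and transforms $\vec{w}$ into $\vec{w}' = (w_1/c_1, \ldots, w_m/c_m)$; the closure of $\{t\vec{w}' : t \in \R\}$ in $(\R/\Z)^m$ is then a connected subtorus whose dimension equals $\dim_\Q \spa_\Q\{w_1/c_1, \ldots, w_m/c_m\}$. Since $w_i \in \{0, \pm 1\}$ and the zero entries contribute nothing to the $\Q$-span, this dimension matches the $d$ defined in (\ref{eq: dim}). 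Combined with the fact that $\mathcal{VT}_x$ is an affine submanifold of $\HH$ in period coordinates (via Proposition~\ref{prop:Phi derivative}), this endows $\mathcal{O}(x)$ with the structure of a $d$-dimensional affine sub-orbifold of $\HH$.

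For the $\Q$-rationality of the tangent space, the key is Proposition~\ref{prop:Phi derivative}: $D\til\Phi$ sends $\partial/\partial y_i$ to the integer class $(0, C_i^*) \in H^1(S, \Sigma; \Z^2)$, so $D\til\Phi$ is defined over $\Q$ with respect to the standard $\Q$-structure on $\R^m$ and the integer-cohomology $\Q$-structure on $H^1(S, \Sigma; \R^2)$. The velocity vector $\vec{w}$ is already an integer vector, and the characters of $\mathbb{T}$ annihilating the orbit are $\Z$-combinations of the standard dual basis; from these integer data one extracts a $\Q$-rational subspace $W \subset \R^m$ whose image under $D\til\Phi$ is the tangent space to $\mathcal{O}(x)$. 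Since $D\til\Phi$ is defined over $\Q$, this image is a $\Q$-subspace of $H^1(S, \Sigma; \R^2)$, and because $\mathcal{O}(x)$ is affine in period coordinates, the same subspace realizes the tangent space at every point.

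The main technical challenge is reconciling the two natural $\Q$-structures on $\R^m$ implicit in the setup: the standard one coming from $\Z^m$, and the lattice-rational one inherited from $\Lambda = \bigoplus c_i \Z$, which after rescaling becomes the standard structure on $(\R/\Z)^m$. A subspace of $\R^m$ rational with respect to $\Lambda$ need not be rational with respect to $\Z^m$ in general, so the integer nature of $\vec{w}$ together with the $\Q$-defined map $D\til\Phi$ is what ultimately ensures that the tangent space lands in a genuine $\Q$-subspace of cohomology; once this compatibility is nailed down, the remaining verifications are routine consequences of Kronecker-Weyl and of the finite-group quotient argument.
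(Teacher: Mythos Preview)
Your dimension computation is essentially the paper's: rescale $\mathbb{T}=\prod \R/c_i\Z$ to the standard torus, apply Kronecker--Weyl to the straightline flow in direction $\vec v=(w_1/c_1,\ldots,w_m/c_m)$, and read off $d=\dim_\Q\spa_\Q\{w_i/c_i\}$. That part is fine.

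The gap is in the $\Q$-rationality argument. You correctly isolate the difficulty---there are two $\Q$-structures on $\R^m$, the standard one (for which $D\til\Phi$ is $\Q$-defined) and the $\Lambda=\bigoplus c_i\Z$ one (for which the subtorus tangent space $W$ is rational)---but you do not actually bridge them. The sentence ``the integer nature of $\vec w$ together with the $\Q$-defined map $D\til\Phi$ is what ultimately ensures\ldots'' is an assertion, not an argument: the orbit closure in $\mathbb{T}$ is cut out by characters $\vec y\mapsto \sum n_i y_i/c_i$ with $n_i\in\Z$, and the resulting linear forms $\sum (n_i/c_i)y_i$ on $\R^m$ are generally \emph{not} rational in the standard sense, regardless of $\vec w$ being integral. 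So nothing you have written forces $W$ (hence $D\til\Phi(W)$) to be a $\Q$-subspace.

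The paper does not try to show directly that $W$ is $\Q$-rational in the standard structure. Instead it passes through the rescaled map $\til\Psi(t_1,\ldots,t_m)=\til\Phi(c_1t_1,\ldots,c_mt_m)$, for which the relevant subspace $V\subset\R^m$ \emph{is} $\Q$-rational (standard structure), chooses an integer basis $\vec v_1,\ldots,\vec v_d$ of $V$, and observes that $\til\Psi$ intertwines translation by each $\vec v_j$ with the action of a multi-twist in $\operatorname{M}_0(\x)$. The point is then that each Dehn twist $\tau_i$ acts on $H^1(S,\Sigma;\R^2)$ by a map induced from the integral transvection $\gamma\mapsto\gamma+(\gamma\cap C_i)C_i$ on $H_1(S,\Sigma;\Z)$, and the tangent directions to $\mathcal{O}(x)$ are the displacement vectors produced by these multi-twists. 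This is a genuinely different route from yours: it replaces the attempt to reconcile two lattice structures on $\R^m$ with an appeal to the integrality of the mapping-class-group action on homology.
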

\begin{proof}
We will work with the
standard $m$-torus ${\mathbb T}^m=\R^m/\Z^m$, and 
define 
\eq{def:Psi}{
\Psi: {\mathbb T}^m \to \mathcal{VT}_x; \quad (t_1, \ldots, t_m) \mapsto
\Phi(c_1 t_1, \ldots, c_m t_m).
}
Then the conjugacy from Proposition \ref{prop:conjugacy} leads to a
semi-conjugacy from the straight-line flow on ${\mathbb T}^m$ in
direction  
$$\vec{v}=\left(\frac{-1}{c_1}, \cdots, \frac{-1}{c_k},
  \frac{1}{c_{k+1}}, 
  \cdots, \frac{1}{c_\ell},0, \ldots, 0\right).$$
The orbit closure of the origin of this straight-line flow is a
rational subtorus, and the tangent space to the origin is the smallest
real subspace $V$ of $\R^m$ defined over $\Q$ and containing the
vector $\vec{v}$. Every rational relation among the coordinates of $\vec{v}$
gives a linear equation with $\Q$-coefficients satisfied by $\vec{v}$, and
vice versa; this implies that the dimension of $V$ is the same as the rational
dimension of \equ{eq: dim}. 

Similarly to \equ{def:Psi}, let $\til \Psi: \R^m \to
\HH_{\mathrm{m}}$ be defined by $\til \Psi (t_1, \ldots, t_m) = \til
\Phi(c_1t_1, \ldots, c_mt_m)$. 
Then $\til \Psi$ intertwines
the action of $\Z^m$ on $\R^m$ by translations, with the action of
$\operatorname{M}_0(\x)$ on the image of $\til \Psi$, and we have $\mathcal{O}(x) =
\pi (\til \Psi(V))$.  
Let $\vec{v}_1, \ldots, \vec{v}_d$ be a basis of $V$ contained in $\Z^m$, and let
$\Gamma \subset \Z^m$ be the sub-lattice $ \langle \vec{v}_1, \ldots, \vec{v}_d \rangle.$ 
The subspace $V$ is fixed by the action
$\Gamma \subset \Z^m$ by translations, and $\til \Psi$ intertwines this
translation action with  a translation action of a subgroup of 
$\operatorname{M}_0(\x)$. Thus in order to prove that the tangent space to
$\mathcal{O}(x)$ is defined over $\Q$, it is enough to prove that any
element of $\operatorname{M}_0(\x)$ acts by translation by a rational 
vector. 

As we have seen (see \equ{eq:Phi holonomy}), the action of each $\tau_i$ on $H^1(S,
\Sigma; \R^2)$ is induced by its action on $H_1(S, \Sigma ; \Z)$ via 
$$\gamma \mapsto \gamma + (\gamma \cap C_i) C_i,$$ 
i.e. by
translating by a vector in $H_1(S, \Sigma; \Z)$. Now the assertion
follows from the definition of the $\Q$-structure on
$H^1(S, \Sigma ; \R^2)$.
%
\end{proof}

\begin{figure}
    \includegraphics[scale=0.65]{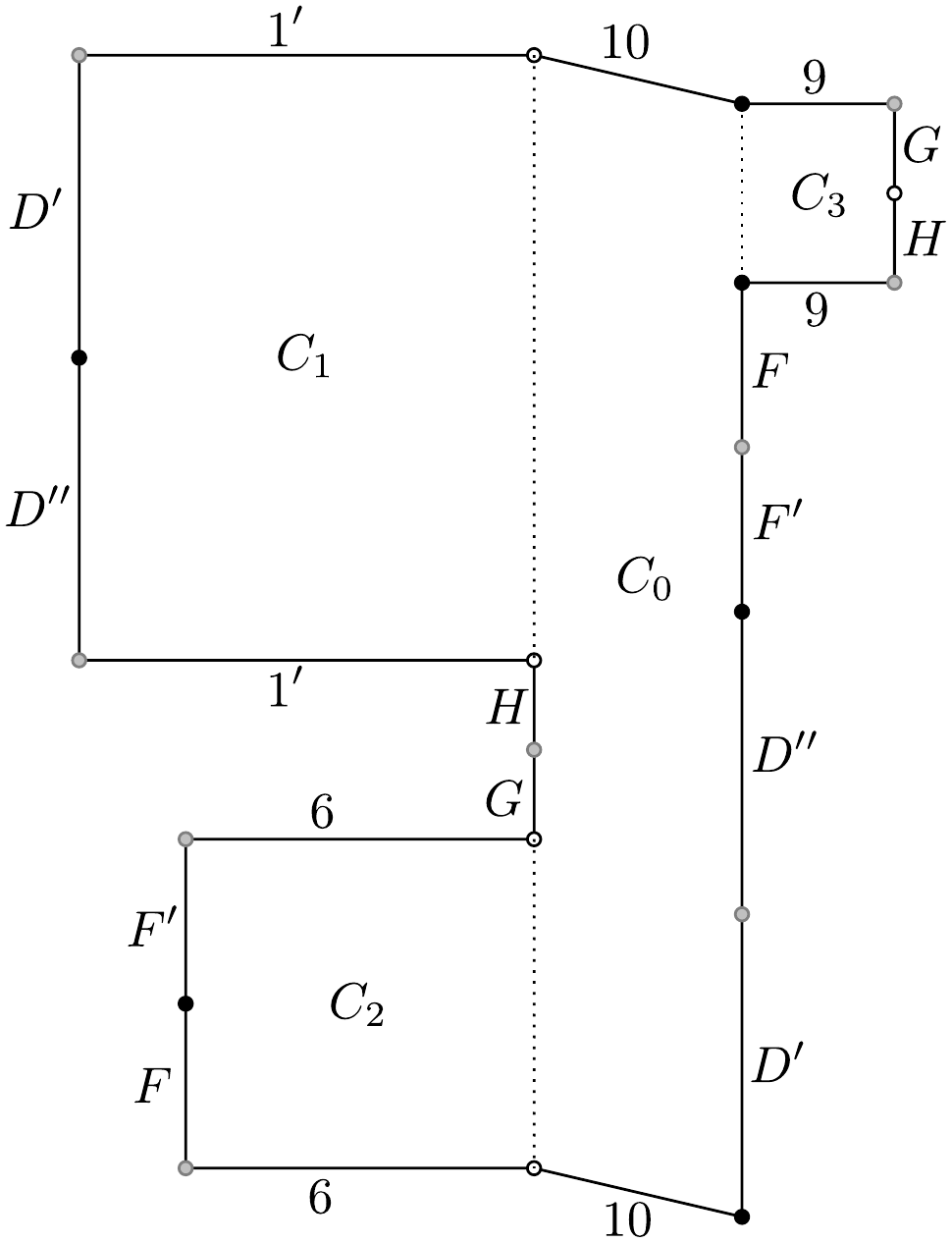}
\caption{Cylinders on the surface $x_r$ with $1<r<\alpha^{-1}$. }
\name{fig:cylinder data}
\end{figure}

\subsection{Vertical rel flow on deformations of the Arnoux-Yoccoz surface}
We now specialize further  to $x_r = \rel^{(h)}_r x_0$. Throughout
this section we assume that $r>0$ and $r$ not an integral power of $\alpha$.
Then $x_r$ admits a decomposition into four cylinders by Lemma \ref{lem: cylinder geometry}.
For such $r$, define $\mathcal{O}_r = \mathcal{O}(x_r)$ via \equ{eq:
  retaining}, and denote the tangent space to $\mathcal{O}_r$ at $x_r$
by $T_r$. Then we have:  
\begin{lem}
\name{lem:tangent space}
With  the notation above,  
$T_r$ is a 3-dimensional $\Q$-subspace of $H^1(S,\Sigma;\R^2)$, 
and ${\mathcal
  O}_r$ 
is a three dimensional affine torus. 
\end{lem}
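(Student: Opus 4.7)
The plan is to combine Proposition \ref{prop: real rel closure} (which gives both the dimension of $\mathcal{O}_r$ and the rationality of $T_r$) with Proposition \ref{prop:Phi} (which upgrades $\mathcal{O}_r$ from a torus-quotient to an honest torus), using the explicit cylinder data from Lemma \ref{lem: cylinder geometry} as input.

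Fix $k\in\Z$ with $\alpha^{-k}<r<\alpha^{-(k+1)}$, so that $x_r$ has four vertical cylinders $C_k,\ldots,C_{k+3}$ of circumferences $c_j=2\alpha^j(1+\alpha^2)$. By Proposition \ref{prop: real rel closure}, $T_r$ is automatically a $\Q$-subspace of $H^1(S,\Sigma;\R^2)$, and $\dim T_r$ equals the $\Q$-dimension of
\[
\spa_\Q\!\left\{\frac{\epsilon_j}{c_j}:j=k,\ldots,k+3\right\} = \frac{1}{2(1+\alpha^2)}\,\spa_\Q\!\left\{\epsilon_j\,\alpha^{-j}:j=k,\ldots,k+3\right\},
\]
where $\epsilon_j\in\{-1,0,+1\}$ records the relative coloring of the two boundary components of $C_j$, with $\epsilon_j=0$ meaning the same singularity occurs on both sides. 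The first task is to verify, by inspecting Figure \ref{fig:cylinder data} (or tracking singularities through Figures \ref{fig:step1}--\ref{fig:step4}), that at least three of the $\epsilon_j$ are nonzero when $1<r<\alpha^{-1}$; the case of general $r>0$ then follows by the $\til g$-equivariance of the cylinder decomposition encoded in Corollary \ref{cor:action}. Since the minimal polynomial of $\alpha$ is $\alpha^3+\alpha^2+\alpha-1$, the relation $\alpha^{-3}=1+\alpha^{-1}+\alpha^{-2}$ is, up to scalar, the unique $\Q$-linear relation among $\{1,\alpha^{-1},\alpha^{-2},\alpha^{-3}\}$, and all four of its coefficients are nonzero. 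Hence any three of the four numbers $\{\alpha^{-j}\}_{j=k}^{k+3}$ are $\Q$-linearly independent. Flipping signs of basis vectors does not affect their span, so the signed span above has $\Q$-dimension exactly $3$, giving $\dim T_r = 3$.

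To conclude that $\mathcal{O}_r$ is a genuine $3$-torus rather than a torus-quotient, I would verify the hypotheses of Proposition \ref{prop:Phi}. The four circumferences $c_j=2(1+\alpha^2)\alpha^j$ are pairwise distinct because $0<\alpha<1$. For the second hypothesis, one reads off the vertical saddle connections of $x_r$ from Figure \ref{fig:step4} (and applies powers of $\til g$ to handle other $r$) and checks that each of the four cylinders has at least one boundary saddle connection whose length does not appear on that same boundary more than once; this is a routine case-by-case verification from the edge-length table. Once Proposition \ref{prop:Phi} applies, $\Phi:\mathbb{T}\to\mathcal{VT}_{x_r}$ is an isomorphism of affine manifolds, and by Proposition \ref{prop:conjugacy} combined with elementary Kronecker theory, the closure of the vertical-rel trajectory inside $\mathbb{T}$ is a connected affine $3$-subtorus, which is the desired $\mathcal{O}_r$.

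The most delicate step is the figure-inspection in the second paragraph: one must correctly read off both the colorings $\epsilon_j$ of the cylinder boundaries and the boundary saddle-connection configuration of each cylinder in $x_r$. Once these combinatorial data are extracted, the algebraic conclusions follow immediately from the cubic structure of $\Q(\alpha)$.
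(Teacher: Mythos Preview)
Your proposal is correct and follows essentially the same route as the paper: invoke Proposition~\ref{prop: real rel closure} to reduce the dimension of $T_r$ to the $\Q$-dimension of the span of the reciprocal circumferences, compute that dimension to be $3$ using that $\alpha$ is cubic, and then verify the hypotheses of Proposition~\ref{prop:Phi} by inspecting Figure~\ref{fig:step4} (extending to general $r$ via powers of $\til g$) to conclude $\mathcal{O}_r$ is an honest $3$-torus rather than a quotient. The only minor difference is that the paper simply writes the span with all four terms $\{-1/c_k,\,1/c_{k+1},\,1/c_{k+2},\,1/c_{k+3}\}$ and scales to identify it directly with $\Q(\alpha)$, whereas you take the extra precaution of allowing some $\epsilon_j$ to vanish and arguing via the observation that the unique cubic relation has all nonzero coefficients; in fact the figure shows all four $\epsilon_j$ are nonzero, so your caution is unnecessary but harmless.
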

\begin{proof}
Let $k \in \Z$ such that  $\alpha^{-k}<r<\alpha^{-(k+1)}$.
Lemma \ref{lem: cylinder geometry} tells us that $x_r$ has a four cylinder decomposition
with cylinders named $C_k$, $C_{k+1}$, $C_{k+2}$ and $C_{k+3}$, and by
Lemma \ref{lem:holonomies}, their circumferences are 
\eq{eq: their circumferences}{
c_i=2 \alpha^i(1+\alpha^2).
}

Following Proposition \ref{prop: real rel closure} we consider the $\Q$-vector space
$$S=\spa_{\Q} \left\{\frac{-1}{c_k}, \frac{1}{c_{k+1}}, \frac{1}{c_{k+2}}, \frac{1}{c_{k+3}}\right\} \subset \R.$$
(The signs are irrelevant for our purposes, but can be determined by 
noting that there is a $k \in \Z$ so that $\til g^{-k} x_r$ is one of the surfaces
represented by Figure \ref{fig:cylinder data}.) Scaling by $c_{k+3}$
we have $c_{k+3} S = \spa_{\Q} (1, \alpha, \alpha^2, \alpha^3) \cong
\Q(\alpha)$. Since $\alpha$ is cubic, $\dim_{\Q} S = \dim_{\Q} \Q(\alpha)=3$. 

This proves the first assertion. For the second one, we note by inspecting  Figure \ref{fig:step4} that 
$x_r$ satisfies the conditions of Proposition \ref{prop:Phi} when
$1<r<\alpha^{-1}$. We can extend to all $r$ as above by an
appropriate action of a  power of $\til g$. 
This means
that the map $\Phi$ given in \equ{eq:Phi}
is injective, and the image is a 4-dimensional rational torus in $\HH(2,2)$. By
Proposition \ref{prop: real rel closure}, 
${\mathcal O}_r$ is an affine 3-dimensional sub-torus.  
\end{proof}


Define the {\em vertical twist cohomology subspace} $P  $ 
to be the real subspace of $H^1(S,\Sigma;\R^2)$ spanned by the classes of
the form $(0,C^\ast)$, where $C$ varies over the cylinders of
$x_r$. As we have seen, $P$ is the tangent space to
$\mathcal{VT}_r$. 
In light of Proposition \ref{prop:Homological
  generators} and Lemma \ref{lem: cylinder geometry}, $P$ is independent of $r$, and by Proposition \ref{prop: real rel closure}, contains the subspace
$T_r$ for all $r$ as above.

The subspace $P$ is related to horizontal holonomy in
the surface $x_r,$ as follows. If $C_i$ are the vertical cylinders on
$x_r$, $\xi_i$ are their widths (that is $\xi_i$ are the $x_i$ of
\equ{eq: if you like it}), and $C_i^\ast \in H^1(S,\Sigma;\Z)$
are their dual classes as in (\ref{eq:cohomology class of cylinder}), then for
each $\gamma \in H_1(S,\Sigma;\Q)$, 
\eq{eq:holx}{
\hol_{\mathrm x}(\gamma,x_r)=\sum_i \xi_i C_i^\ast(\gamma) 
.} 
Therefore we see: 
\ignore{
\begin{prop}
\name{prop:bijection}
The subspace $P \subset H^1(S,\Sigma;\R^2)$ is in bijective correspondence with the collection of linear maps
$$L:\Q(\alpha) \oplus r \Q \to \R.$$
Namely, each $w \in H^1(S,\Sigma;\R^2)$ determines a pair of linear maps $w_{\mathrm x}, w_{\mathrm y}:H_1(S,\Sigma;\Q) \to \R$.
We have $w \in P$ if and only if $w_{\mathrm x}=0$ and $w_{\mathrm y}=L \circ \hol_{\mathrm x}$ for some $L$ as above, where 
$\hol_{\mathrm x}:H_1(S,\Sigma;\Q) \to \Q(\alpha) \oplus r \Q$ is defined as in (\ref{eq:holx}).
\end{prop}
\begin{proof}
By (\ref{eq:holx}), we can recover holonomy by evaluating with elements of $P$. Since $P$ is a rational subspace,
it must also contain every cohomology class associated to a pair $(0,L \circ \hol_{\mathrm x})$. 
Let $P'$ denote the collection of cohomology classes associated to such pairs.
We have shown $P' \subset P$. Observe that $P'$ is a vector space of
dimension four because the dimension of the rational vector space
$\Q(\alpha) \oplus r \Q$ is $4$. 
But $P$ is at most $4$ dimensional since the cylinder decompositions
of $x_r$ with $r$ positive and not an integral power of $\alpha$
consist of four cylinders. Since the dimensions match and we have one
inclusion, we know $P=P'$. 
\end{proof}

This point of view can be used to give a description of the action of $\varphi^\ast$ on the subspace $P$:
}
\begin{cor}
\name{cor:action on P}
The action of $\varphi^\ast$ on $H^1(S,\Sigma;\R^2)$ preserves the subspace $P$. 
The cohomology class $\big(0,\hol_{\mathrm x}(x_0)\big) \in
H^1(S,\Sigma;\R^2)$ lies in $P$ and is a dominant eigenvector for the
action of $\varphi^\ast$ 
on $P$. The corresponding eigenvalue is $\alpha^{-1}$. 
\end{cor}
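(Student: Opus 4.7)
The plan is to verify the three assertions in sequence. For preservation of $P$, the pseudo-Anosov $\varphi$ has derivative $\til g=\mathrm{diag}(\alpha^{-1},\alpha)$, which preserves the vertical direction and scales vertical lengths by $\alpha$; combined with the ratio $c_{j+1}/c_j=\alpha$ from Lemma \ref{lem: cylinder geometry}, this forces $\varphi$ to identify $C_j\subset x_r$ with $C_{j+1}\subset x_{\alpha^{-1}r}=\til g x_r$. Hence $\varphi_*[C_j]=[C_{j+1}]$ in $H_1(S\sm\Sigma)$, and by the intersection-pairing definition $C_j^*(\gamma)=\gamma\cap C_j$, one obtains $\varphi^* C_j^*=C_{j-1}^*$. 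Thus $\varphi^*$ permutes the spanning family $\{(0,C_j^*)\}$ of $P$ and preserves $P$.

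Next I would show $(0,\hol_{\mathrm x}(x_0))\in P$ and identify the eigenvalue. For $r>0$ not an integral power of $\alpha$, equation (\ref{eq:holx}) gives $\hol_{\mathrm x}(x_r)=\sum_i \xi_i C_i^*\in H^1(S,\Sigma;\R)$, so $(0,\hol_{\mathrm x}(x_r))\in P$ for all such $r$. Since $P$ is a closed linear subspace and $r\mapsto\hol_{\mathrm x}(x_r)$ is continuous, letting $r\to 0^+$ yields $(0,\hol_{\mathrm x}(x_0))\in P$. For the eigenvalue, since $\varphi$ is affine with derivative $\til g$, locally $\varphi^*dx=\alpha^{-1}dx$, so $\varphi^*\hol_{\mathrm x}(x_0)=\alpha^{-1}\hol_{\mathrm x}(x_0)$ in $H^1(S,\Sigma;\R)$. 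As $\varphi^*$ acts componentwise on $H^1(S,\Sigma;\R^2)$, this gives $\varphi^*(0,\hol_{\mathrm x}(x_0))=\alpha^{-1}(0,\hol_{\mathrm x}(x_0))$.

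For dominance, I would determine the spectrum of $\varphi^*|_P$ via the characteristic polynomial of $\varphi^*$ on $H^1(S,\Sigma;\Q)\cong\Q^7$, which factors as $(x-1)(x^3-x^2-x-1)(x^3+x^2+x-1)$: the cubic factors are the minimal polynomials of the dilation $\alpha^{-1}$ and its symplectic reciprocal $\alpha$ acting on $H^1(S;\R)$, while the factor $x-1$ comes from the $1$-dimensional kernel of $H^1(S,\Sigma;\R)\to H^1(S;\R)$, on which $\varphi^*$ acts as the identity (since $\varphi$ fixes $\Sigma$ pointwise). Now $P$ is a $4$-dimensional $\Q$-subspace containing the $\alpha^{-1}$-eigenvector, so by $\Q$-invariance it contains the entire $3$-dimensional eigenspace of $x^3-x^2-x-1$; the remaining dimension must correspond to a rational root of the full polynomial, leaving only $x-1$. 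Thus the eigenvalues of $\varphi^*|_P$ are $\alpha^{-1}$, its two complex Galois conjugates $1/\beta,1/\bar\beta$ (each of absolute value $\sqrt{\alpha}<1$), and $1$. Since $\alpha^{-1}\approx 1.84$ strictly exceeds all the others, it is the dominant eigenvalue.

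The main obstacle is identifying the characteristic polynomial of $\varphi^*$ on $H^1(S,\Sigma;\Q)$ without appealing to general pseudo-Anosov theory. One can instead compute $\varphi^*|_P$ directly: the identity from Proposition \ref{prop:Homological generators} lifts from $H_1(S,\Sigma)$ to $H_1(S\sm\Sigma)$ as $[C_k]=[C_{k+1}]+[C_{k+2}]+[C_{k+3}]+c\epsilon$ for some $c\in\Z$ (where $\epsilon$ is a small loop around a singularity), and combined with the fact that $\eta=\epsilon^*\in H^1(S,\Sigma)$ is fixed by $\varphi^*$ (since $\varphi$ fixes $\Sigma$ pointwise), this yields a $4\times 4$ matrix for $\varphi^*|_P$ whose characteristic polynomial is $(x-1)(x^3-x^2-x-1)$, confirming dominance by Pisot.
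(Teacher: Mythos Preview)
Your argument is correct. The first two parts (invariance of $P$ and the eigenvector identification) coincide with the paper's proof: the paper also uses the fact that $\til g x_r = x_{\alpha^{-1}r}$ sends vertical cylinders to vertical cylinders and the $r$-independence of $P$ for invariance, and likewise obtains membership by passing to the limit in \eqref{eq:holx} and reads off the eigenvalue from $\varphi^\ast dx = \alpha^{-1}\,dx$.

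The genuine difference is in the dominance step. The paper simply invokes a general structural fact (Rykken \cite{Fri}): for any pseudo-Anosov, the expansion factor is the unique largest eigenvalue of the induced map on $H^1(S,\Sigma;\R)$, so in particular it dominates on the invariant subspace $P$. You instead pin down the full spectrum of $\varphi^\ast$ on $H^1(S,\Sigma;\Q)$ as $(x-1)(x^3-x^2-x-1)(x^3+x^2+x-1)$ via the symplectic reciprocity on $H^1(S;\Q)$ and the triviality of the action on $\ker(\Res)$, and then use that $P$ is a $4$-dimensional $\varphi^\ast$-invariant $\Q$-subspace containing the $\alpha^{-1}$-eigenvector to force $P$ to be the sum of the $(x^3-x^2-x-1)$-piece and the $(x-1)$-piece (the remaining factor being $\Q$-irreducible of degree $3$, so it cannot contribute a single dimension). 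This is more work but entirely self-contained, and it yields the extra information $P = V_1 \oplus V_3$ and the precise eigenvalue list $\{\alpha^{-1},\ \sqrt{\alpha}\,e^{\pm i\theta},\ 1\}$, which the paper does not extract. Your final paragraph proposing a direct $4\times 4$ matrix computation via a lift of the relation in Proposition~\ref{prop:Homological generators} to $H_1(S\sm\Sigma)$ is unnecessary, since your rational-subspace argument already suffices; that lift would in any case require care with the boundary class $\epsilon$ that you do not fully carry out.
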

\begin{proof}
Since $\til g x_r = x_{\alpha^{-1}r},$ $\varphi$ maps the cohomology
classes represented by core curves of vertical cylinders on $x_r$, to cohomology
classes represented by core curves of vertical cylinders on
$x_{\alpha^{-1}r}$. Since $P$ is independent of $r$, we find that $\varphi^\ast$
preserves $P$. 
By equation (\ref{eq:holx}) and from the definition of $P$, we see
that $\big(0,\hol_{\mathrm x}(x_r)\big)$ lies in $P$ for all
$r$. 
By
projecting onto the second summand in $\R^2 = \R \oplus \R$ we
can identify $P$ with a subspace of $H^1(S, \Sigma; \R)$, and we
continue to denote this subspace by $P$, then we have shown that 
$\hol_{\mathrm x}(x_r) \in P$ for all $r$ as above, so letting $r \to
0$ we find that $\hol_{\mathrm{x}}(x_0) \in P$. 
Since $\varphi$ is a pseudo-Anosov on $x_0$ with derivative $\til g$, we know that 
$$\varphi^\ast \hol_{\mathrm x}(x_0)=\alpha^{-1} 
\hol_{\mathrm x}(x_0).$$
Moreover it is known \cite{Fri} that the action of a pseudo-Anosov map on $H^1(S,
\Sigma; \R)$ has a unique dominant eigenvector whose eigenvalue is the
expansion coefficient of the pseudo-Anosov, in this case
$\alpha^{-1}$. Thus $\hol_{\mathrm x}(x_0)$ is an dominant eigenvector
for the action of $\varphi^\ast$ on $H^1(S, \Sigma; \R)$ and hence
also for the action on $P$. 
%
\end{proof}

Let $P_1 = \spa \{ \hol_{\mathrm{x}}(x_0) \} \subset P$. Since $P_1$
is generated by a dominant eigenvector, there is a
$\varphi^\ast$-invariant complementary subspace $P_2$. 
We have:

\begin{cor}
\name{cor:not invariant2}
For any $r$ as above, the subspace $T_r \subset P$ is not contained in $P_2$. 
\end{cor}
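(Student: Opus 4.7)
The plan is to argue by contradiction, assuming $T_r \subset P_2$. Because $\dim T_r = 3$ by Lemma \ref{lem:tangent space} and $\dim P_2 = \dim P - \dim P_1 = 4 - 1 = 3$, the inclusion would force $T_r = P_2$. Combined with the $\Q$-rationality of $T_r$ asserted in Lemma \ref{lem:tangent space}, this would make $P_2$ simultaneously $\Q$-rational and $\varphi^*$-invariant (it is the latter by its definition). The task reduces to classifying the $\Q$-rational $\varphi^*$-invariant subspaces of $P$ and ruling out any 3-dimensional one complementary to $P_1$.

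To carry out this classification, I would use that $\varphi^*$ preserves the integral lattice $H^1(S,\Sigma;\Z^2)$, so its characteristic polynomial on $P$ has integer coefficients. By Corollary \ref{cor:action on P}, $\alpha^{-1}$ is an eigenvalue of $\varphi^*|_P$, and the defining relation $\alpha+\alpha^2+\alpha^3=1$ shows that $\alpha^{-1}$ is a root of $t^3 - t^2 - t - 1$, which is irreducible over $\Q$ since $\alpha$ is cubic. Hence this cubic is the minimal polynomial of $\alpha^{-1}$ and divides the characteristic polynomial of $\varphi^*|_P$, leaving a monic linear quotient $t - \lambda$. Since $\varphi^*$ is invertible and $\lambda$ is a rational root of a monic integer polynomial, $\lambda \in \Z \setminus \{0\}$. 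The three roots of the cubic are irrational and so differ from $\lambda$; the two irreducible factors are therefore coprime, making $\varphi^*|_P$ semisimple over $\Q$. This yields a canonical $\Q$-rational decomposition $P = P_{\alpha^{-1}} \oplus E$ with $\dim P_{\alpha^{-1}} = 3$ and $\dim E = 1$, and by irreducibility of each factor the only $\Q$-rational $\varphi^*$-invariant subspaces of $P$ are $\{0\}$, $E$, $P_{\alpha^{-1}}$, and $P$.

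It would then follow that $T_r = P_{\alpha^{-1}}$, as this is the unique 3-dimensional member of the list. But $P_{\alpha^{-1}}$ is the generalized eigenspace for the Galois orbit of $\alpha^{-1}$ and hence contains the $\alpha^{-1}$-eigenvector $\hol_{\mathrm x}(x_0)$ spanning $P_1$. This would give $P_1 \subset P_{\alpha^{-1}} = T_r = P_2$, contradicting the fact that $P_2$ is complementary to $P_1$. The main step requiring care is verifying that the characteristic polynomial factors as claimed with a rational (and hence integral) linear factor and that $\varphi^*|_P$ is semisimple; both follow from the integrality of the characteristic polynomial together with the coprimality of its two irreducible factors.
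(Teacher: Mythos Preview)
Your argument is correct. One small point to tighten: when you assert that the characteristic polynomial of $\varphi^\ast|_P$ has integer coefficients, you should note that $P$ is itself spanned by the integral classes $(0,C_i^\ast)$, so that $\varphi^\ast$ (which preserves both $P$ and the ambient integral lattice) preserves a lattice in $P$; preserving the ambient lattice alone would not suffice for an arbitrary invariant real subspace.

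Your route differs from the paper's. The paper does not analyze the characteristic polynomial of $\varphi^\ast|_P$; instead it applies the restriction map $\Res:H^1(S,\Sigma;\R)\to H^1(S;\R)$. Since $T_r$ contains the one-dimensional vertical-rel direction lying in $\ker\Res$, the image $\bar T_r=\Res(T_r)$ is a two-dimensional $\Q$-rational $\varphi^\ast$-invariant subspace of $\bar P=\Res(P)$. But $\alpha^{-1}$ is an eigenvalue of $\varphi^\ast$ on $\bar P$, and its minimal polynomial is cubic, so $\bar P$ has no proper nonzero $\Q$-rational invariant subspace --- contradiction. The paper's argument is shorter because passing to absolute cohomology strips off exactly the rational linear factor you had to isolate by hand, leaving a space on which irreducibility of the cubic does all the work. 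Your approach, by contrast, gives more: it actually classifies all $\Q$-rational $\varphi^\ast$-invariant subspaces of $P$, which is a slightly stronger statement than what is needed.
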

\begin{proof}
Suppose by way of contradiction that $T_r \subset P_2$. 
By counting dimensions, we obtain that $T_r =
P_2$ and in particular that $\varphi^\ast(T_2) =T_2$. Consider the
$\varphi^\ast$-equivariant projection $\Res: H^1(S,
\Sigma; \R) \to H^1(S ; \R)$ as in \equ{eq: defn Res} (where we take
coefficients in $\R$), and set 
$$
\bar{T}_r = \Res(T_r) \ \ \text{ and } \bar{P} = \Res(P).
$$
By definition of the respective
$\Q$-structures, $\Res$ is defined over
$\Q$, so by Proposition \ref{prop: real rel closure}, $\bar{T}_r$ is a $\Q$-subspace of
$\bar{P}$, which is invariant under $\varphi^\ast$. Since $T_r$
contains the tangent direction to $\rel^{(v)}_r$, which is contained
in the kernel of $\Res$, we find that $\dim \bar{T}_r  = 2$. Thus we
have found a two-dimensional $\varphi^\ast$-invariant $\Q$-subspace of
$\bar{P}$. But one of the eigenvalues of $\varphi^\ast$ on $\bar{P}$ is
the cubic number $\alpha^{-1}$. This is a contradiction. 
\end{proof}
%

\ignore{
\begin{thm}\name{thm: crucial}
For any sequence $r_n \to 0$, with $r_n >0$ and not a power of
$\alpha$, we have
$$
V x_0 \subset \overline{\,\bigcup_n {\mathcal O}_{r_n}\,}.
$$
\end{thm}
}
\begin{thm}\name{thm: crucial}
For any $r$ as above, let $r_n = \alpha^n r \to 0$. Then 
$$
V x_0 \subset \overline{\,\bigcup_n {\mathcal O}_{r_n}\,}.
$$
\end{thm}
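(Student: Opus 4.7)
The plan is to construct vectors $\zeta_n \in T_{r_n}$ converging to the $V$-tangent direction $\zeta := (0, \hol_{\mathrm{x}}(x_0)) \in P$ and then to approximate each orbit point $v_s x_0$ by the point $p_n(s) \in \mathcal{O}_{r_n}$ whose period coordinates are $\hol(x_{r_n}) + s\zeta_n$. Because $s\zeta_n \in T_{r_n}$, this point lies on the affine lift of $\mathcal{O}_{r_n}$ through $x_{r_n}$, hence in $\mathcal{O}_{r_n}$ (for $n$ large enough, depending on $s$, that the lift has not exited the stratum). Since $\hol(v_s x_0) = \hol(x_0) + s\zeta$ and $x_{r_n} \to x_0$, continuity of period coordinates will then give $p_n(s) \to v_s x_0$ in $\HH$, proving $Vx_0 \subseteq \overline{\bigcup_n \mathcal{O}_{r_n}}$.

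The crux is the recursion $T_{r_n} = (\varphi^{\ast})^n T_{r_0}$ of subspaces of $P$. Working in $\HH_{\mathrm{m}}$ with the rel-consistent markings of \S\ref{sect:results}, I compare the marked surfaces $\til g\,x_{r_n}$ and $x_{r_{n-1}}$. By the very definition of the pseudo-Anosov $\varphi$, we have $\til g\, x_0 = \varphi^{-1}x_0$ in $\HH_{\mathrm{m}}$; adding the rel term and using the triviality of the $\Mod(S,\Sigma)$-action on $\mathfrak{R}$, a direct holonomy check extends this identity to $\til g\,x_{r_n} = \varphi^{-1} x_{r_{n-1}}$ for each $n$. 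Consequently the derivative transporting $T_{r_n}$ into $T_{r_{n-1}}$ factors as $(\varphi^{\ast})^{-1}\circ\til g$. Since $\til g$ acts on $P$ by scaling the vertical coordinate and hence preserves every subspace of $P$, the induced map on subspaces reduces to $(\varphi^{\ast})^{-1}$, and iterating yields the claimed recursion.

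With this in hand the remainder is elementary linear algebra on the finite-dimensional space $P$. By Corollary~\ref{cor:action on P}, $\zeta$ is the unique dominant eigenvector of $\varphi^{\ast}|_P$ with eigenvalue $\alpha^{-1}$, so every eigenvalue $\lambda$ of $\varphi^{\ast}|_{P_2}$ satisfies $|\lambda| < \alpha^{-1}$; in particular $\alpha|\lambda| < 1$. By Corollary~\ref{cor:not invariant2}, $T_{r_0}\not\subset P_2$, so I may choose $v \in T_{r_0}$ of the form $v = \zeta + v'$ with $v' \in P_2$. Setting
\[
\zeta_n := \alpha^n(\varphi^{\ast})^n v = \zeta + \alpha^n (\varphi^{\ast})^n v' \in T_{r_n},
\]
the second summand tends to $0$ as $n\to\infty$ since the operator norm of $\alpha^n(\varphi^{\ast})^n|_{P_2}$ decays (polynomial factors from Jordan blocks being absorbed by the exponential). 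Hence $\zeta_n\to\zeta$, completing the plan. The main obstacle I anticipate is the middle step: verifying carefully in the marked cover that $\til g\,x_{r_n} = \varphi^{-1}x_{r_{n-1}}$ for all $n$, not only at the pseudo-Anosov's fixed point $x_0$; once this ``monodromy'' identification is secured, the spectral behaviour of $\varphi^{\ast}|_P$ closes the argument immediately.
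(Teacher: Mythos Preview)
Your proposal is correct and follows essentially the same route as the paper's proof. Both arguments reduce to showing that the tangent spaces $T_{r_n}$ accumulate on the $V$-direction $\zeta=(0,\hol_{\mathrm x}(x_0))$, and both derive this from Corollary~\ref{cor:action on P} (dominance of $\zeta$ for $\varphi^\ast|_P$) together with Corollary~\ref{cor:not invariant2} ($T_r\not\subset P_2$). You make explicit the recursion $T_{r_n}=(\varphi^\ast)^n T_{r_0}$, which the paper leaves implicit; your ``monodromy'' worry is indeed resolved exactly as you anticipate, via the commutation relation $\til g\,\rel^{(h)}_{r_n}\x_0=\rel^{(h)}_{r_{n-1}}\til g\,\x_0$ combined with the triviality of the $\Mod(S,\Sigma)$-action on $\mathfrak{R}$, and it matches the concrete verification $\varphi^\ast C_i^\ast=C_{i-1}^\ast$ available from Corollary~\ref{cor:action}.
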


\begin{proof}
In the affine orbifold structure on $\HH$, the orbit $Vx$ is a line,
and the sets $\mathcal{O}_{r_n}$ are linear submanifolds. Since
$x_{r_n} \to x_0$ it suffices to show that the set of accumulation
points of the tangent space $T_{r_n}$ contains the tangent direction
to $V$. By definition of the $V$-action, 
the derivative of the vertical
horocycle flow $\frac{d}{ds}[v_sx_0]$ 
(as an element of the tangent
space to $\HH$ at $x_0$, identified with $H^1(S, \Sigma; \R^2)$) is precisely 
$\big(0,\hol_{\mathrm x}(x_0)\big)$. By Corollary
\ref{cor:action on P},  $\big(0,\hol_{\mathrm x}(x_0)\big)$ is the
dominant eigenvector for the action of $\varphi^\ast$ on $P$. So it is
enough to show that $T_r$ contains a vector which projects
non-trivially onto $P_1$, with respect to the decomposition $P = P_1
\oplus P_2$. But this is immediate from Corollary \ref{cor:not
  invariant2}. 
%
%
\end{proof}

\section{The closure of a leaf} \name{sec: endgame}
\begin{proof}[Proof of Theorem \ref{thm: rel}]
Let $\HH$ denote the set of surfaces in
$\HH^{\mathrm{odd}}(2,2)$ with the same area as $x_0$, and let $\Omega \subset \HH$ denote the
closure of the rel leaf of $x_0$. For any $r>0$ 
and $r$ not an integral power of $\alpha$, $\Omega$ contains
$\mathcal{O}_r = \mathcal{O}(x_r)$ (as in\equ{eq: retaining}).
Hence by Theorem \ref{thm: crucial}, $\Omega$ contains
the orbit $Vx_0$. 
Now using Corollary \ref{cor: corollary V},
$\Omega$ contains the hyperelliptic locus $\LL$. 

Since $\Omega$ is saturated with respect to the Rel foliation, 
for any $z \in \LL$ and any $u \in \mathfrak{R}$ for which $\rel^u (z)$
is defined, $\Omega$ contains $\rel^u(z)$. Given $z \in \LL$ and $g
\in G$, $gz \in \LL$ since $\LL$ is $G$-invariant. Moreover, if $\rel^u(z)$
is defined, by Proposition \ref{prop: rel and G commutation},
$\rel^{gu}(gz) = g\rel^u(z)$ is also defined and contained in 
$\Omega$. The set $$\{\rel^u(z): z \in \LL, \, u \in
\mathfrak{R},  \rel^u(z) \text{ is defined} \}$$ 
contains an open subset $\mathcal{U}$ of
$\HH(2,2)$ by Proposition \ref{prop: HLM}. Since
$\mathcal{U}$ has positive measure, with respect to the natural flat measure on
$\HH$, we can apply ergodicity of the $G$-action, to find that
$\mathcal{U}$ contains a point $z_0$ for which $\overline{Gz_0} =
\HH$. Since $\overline{Gz_0} \subset \Omega$  we find that $\Omega =
\HH$. 
\end{proof}
\ignore{
\combarak{Stuff from the older sketch begins here.}

Let $\alpha<1$ be the (unique) real number satisfying $\alpha +
\alpha^2 + \alpha^3 =1$. Then $1/\alpha$ is a Pisot number. 
Let $x_0$ be the Arnoux-Yoccoz surface, normalized by an element of $G$ so that the horizontal and vertical directions are the
contracting and expanding directions respectively of the pseudo-Anosov
map $\varphi: x_0 \to x_0$ defined by Arnoux and Yoccoz. In particular
$d \varphi$ multiplies the horizontal holonomy on $x_0$ by $\alpha$
and multiplies the vertical holonomy by $\alpha^{-1}$. 
the SAF invariant of $x_0$ vanishes in the horizontal and vertical
directions. 
Let $\rel^{(h)}_r, \, \rel^{(v)}_r$ be the horizontal
(resp. vertical) rel flows to distance $r$ (where defined), $U =
\{u_s\}, \, V= \{v_s\}$ be the upper and lower 
triangular unipotent groups, $g_t = \diag(e^t, e^{-t})$, $\HH =
\HH(2,2)^{\mathrm{odd}}$ the connected stratum component containing $x_0$, $\LL \subset \HH$ the
hyperelliptic locus in the non-hyperelliptic stratum of $\HH$. Let $P$
be the upper triangular group in $G$. 

1. For all $r>0$, $\rel^{(h)}_rx_0$ is vertically periodic, but $x_0$
is vertically uniquely ergodic. Rephrase in the language of interval
exchanges. Note: this is a counterexample to a conjecture in
\cite{MW}. 

Similarly swapping horizontal and vertical. 

2. $\left\{\rel^{(h)}_r g_t x_0 : r\geq 0, t\in \R \right\}$ is a properly embedded
cylinder in $\LL$. 

In particular: $\left\{\rel^{(h)}_r x_0: r \geq 0 \right\}$ is defined for all
$r>0$ and $\rel^{(h)}_rx_0 \to_{r \to \infty} \infty$ (first 
divergent rel trajectory ever observed by mankind). 

3.  $\bigcup_{r>0, t \in \R} \overline{V\rel^{(h)}_r g_t x_0 }$ is a torus bundle
over the above  properly embedded
cylinder in $\LL$. 

4. The set of accumulation points of the form $\{\lim_{k\to \infty}
v_k R_k x_0 : R_k  = \rel^{(h)}_{r_k}, r_k \to 0, v_k \in V\}$ is
equal to $\LL$. 

The set of accumulation points of the form $\{\lim_{k\to \infty}
S_k R_k x_0 : R_k  = \rel^{(h)}_{r_k}, r_k \to 0, S_k = \rel^{(v)}_{r'_k}\}$ is
equal to $\LL$.

Corollary: the rel leaf of $x_0$ is dense.

\begin{thm}
The $\rel$ leaf of $x_0$ is dense. 
\end{thm}

\begin{proof}[Sketch of proof]
Step 1. 
For any $r >0$, the set $\{\rel^{(v)}_s \rel^{(h)}_r x_0 : s \in \R\}$
is dense in a 3-dimensional torus, obtained by varying the twists. The
coordinates of this torus can be written explicitly in period
coordinates. We denote this torus by $T_r$. The system of tori $\{T_r:
r>0\}$ is invariant under the action of the pseudo-Anosov map
$\varphi$, that $\varphi(T_r) = T_{\alpha r}$. 

Step 2. For any fixed $r>0$, as $k \to \infty$, $\varphi^k(T_r)$ converges geometrically to a 3 plane
contained in the tangent direction to $\LL$. This is an explicit
computation using the explicit coordinates found in step 1. Denoting
the limiting 3-plane by $T_0$. It is contained in $T_{x_0}(\HH)$, the tangent space to
the stratum at $x_0$. In the sequel we work with $T_1$ instead of any
other $T_r, r>0$ but that is purely arbitrary and we only do this to
free the symbol $r$ for other uses.

I am not certain about the following two steps, first I formulate them
and if they turn out to be false I formulate two alternative steps
which have a better chance of being true but also require more work to
prove. 

Step 3. 
$T_0$ is tangent to the subspace of the tangent space of $\LL$ in
which only vertical holonomy varies. That is, the strong stable space
for $\{g_t\}$ passing through $x_0$. The fact that $T_0$ is tangent to
$\LL$ is an explicit computation and the fact that they are equal is a
dimension count. 

Step 4. The closure of the rel leaf of $x_0$ contains $\LL$. By step 3 
it is enough to show that the topological limit of the collection
$\{\varphi^k(T_1): k=1,2, \ldots \}$ (i.e. sets of accumulation points
of sequences $(y_k), $ where $y_k \in \varphi^k(T_1)$ for all $k$)
contains all of $\LL$. For this by step 3 it is enough to know that
the closure of the strong stable leaf of $x_0$ in $\LL$ is all of
$\LL$. This in turn follows from a mixing argument of Lindenstrauss
and Mirzakhani \cite{LM}. The way the theorem is stated in \cite{LM}
is not good enough for us but John and I have a work in progress in
which we generalize \cite{LM} to the form we need. 

In case step 3 turns out to be false we use:

Step 3'. The direction of $T_0$ contains the tangent direction to the
horocycle flow shearing in the vertical direction (i.e. the orbit
$Vx_0$). This is something we worked out in your office. 

Step 4'. Now using Proposition \ref{prop: V suffices} above, and \cite{HLM}, $\overline{Vx_0} =
\overline{Gx_0} = \LL$. So $\LL$ is contained in the closure of the
rel leaf of $x_0$. 

Step 5. Let $\Omega$ denote the closure of the rel leaf of $x_0$ (so
that 
$\Omega$ contains the rel leaf of
$x_0$ and by Step 4,  properly contains $\LL$). Then for any $r >0$, $\Omega$ contains the $G$-orbit of
$\rel^{(h)}_r x_0.$ For this, using \cite[Thm. 2.1]{EMM2} it suffices to show
that it contains the $P$ orbit of $\rel^{(h)}_r x_0.$ This is implied
by the following
commutation relation between the $P$-action and the action of
$\rel^{(h)}_r$, see \cite[Thm. 1.4]{MW}: if a surface $x$ has no horizontal
saddle connections and $p \in P$ then $\rel^{(h)}_{r'} px = p
\rel^{(h)}_{r}x,$ where 
$$r'= r'(p) = e^t r, \ \ 
\text{ for } p = \left(\begin{matrix} e^t & b \\ 0 & e^{-t} \end{matrix} \right)
$$

Step 6. For $r>0$ let $\FF_r$ denote the $G$-orbit closure of
$\rel^{(h)}_rx_0$. It suffices to show that for some $r>0$ we have
$\FF_r = \HH.$ For each $r$, let $\LL_r$ denote
$\rel_r^{(h)}(\LL_0)$ (where $\LL_0$ denotes the closed subset of
$\LL$ consisting of surfaces with no horizontal saddle connections of
length less than $r$). Then $\LL_r$ is a manifold with boundary whose
complex dimension  is the same as that of $\LL$, that is of complex
codimension 1 in $ \HH$. By the commutation
relation $\rel_r^{(h)}u =u \rel_r^{(h)}$ we have $\LL_r \subset \FF_r$
so that $\dim_{\C} \FF_r \geq \dim\_{\C} \LL = \dim_{\C} \HH -1.$
Moreover for all sufficiently small $r$, the translates $\LL_r$ are
disjoint since the $\rel^{(h)}$ direction is transverse to that of
$\LL$. This implies by \cite[Prop. 2.16]{EMM2} that for all but at
most countably many $r$, $\FF_r = \HH$. 
\end{proof}

\begin{remark}
The arguments used in the preceding proof actually show that for
$r>0$, the sets $T_{\alpha^k r}$ become denser and denser in $\HH$ as $k \to \infty$; i.e. for any $x
\in \HH$ there are sequences $y_k \in T_{\alpha^k r}$
such that $y_k \to x$. 
	\compat{Do you mean ${\mathcal O}_r$ rather than $T_r$? Also how do you know
	that $\lim_{r \to \infty} {\mathcal O}_r$ is larger than $\LL$?}
It it likely that a stronger 
equidistribution result holds, namely that the flat probability measures
on each $T_r$ (normalized Haar measure coming from the structure
of $T_r$ as a torus) converge weak-* to the globally supported flat
measure on $\HH$ as $r \to 0+$. 
\end{remark}
}


\end{document}